\documentclass[11pt]{amsart}

\usepackage[a4paper,margin=1in]{geometry}
\usepackage{amsmath,amssymb,amsthm,mathtools}
\usepackage{enumitem}
\usepackage{float}
\usepackage{tikz}
\usetikzlibrary{calc,positioning,arrows.meta,decorations.pathreplacing,fit,backgrounds}
\usepackage{hyperref}

\hypersetup{
  colorlinks=true,
  linkcolor=blue,
  citecolor=blue,
  urlcolor=blue,
  pdftitle={K-spherical horospherical averages on the Nagao quotient},
  pdfauthor={Sanghoon Kwon}
}

\theoremstyle{definition}
\newtheorem{definition}{Definition}[section]
\newtheorem{remark}[definition]{Remark}

\newtheorem{convention}[definition]{Convention}

\theoremstyle{plain}
\newtheorem{lemma}[definition]{Lemma}
\newtheorem{proposition}[definition]{Proposition}
\newtheorem{corollary}[definition]{Corollary}
\newtheorem{theorem}[definition]{Theorem}

\newcommand{\Fq}{\mathbb F_q}
\newcommand{\Floc}{\mathbb F_q(\!(t^{-1})\!)}
\newcommand{\cO}{\mathcal O}
\newcommand{\bG}{\overline G}
\newcommand{\GammaP}{\Gamma_{\mathrm{PGL}}}
\newcommand{\bK}{\overline K}
\newcommand{\Tree}{\mathcal T}
\newcommand{\ZZ}{\mathbb Z}
\newcommand{\CC}{\mathbb C}
\newcommand{\TV}{\operatorname{TV}}
\newcommand{\fwd}{\mathrm{fwd}}
\newcommand{\bwd}{\mathrm{bwd}}
\newcommand{\ev}{\mathrm{ev}}
\newcommand{\Desc}{\operatorname{Desc}}
\newcommand{\ang}{\langle}
\newcommand{\rang}{\rangle}

\definecolor{figblue}{RGB}{43,83,115}
\definecolor{figteal}{RGB}{42,112,105}
\definecolor{figorange}{RGB}{166,91,48}
\definecolor{figgray}{RGB}{112,119,124}

\tikzset{
  every picture/.style={line cap=round,line join=round},
  vertex/.style={circle,draw=figblue,fill=figblue,inner sep=1.5pt},
  terminal/.style={circle,draw=figteal,fill=figteal,inner sep=1.35pt},
  openvertex/.style={circle,draw=figblue,fill=white,line width=.58pt,
    inner sep=1.95pt},
  axis/.style={line width=.92pt,draw=figblue},
  accent/.style={line width=1.08pt,draw=figblue},
  incoming/.style={->,line width=1.05pt,draw=figorange},
  shadowedge/.style={line width=.44pt,draw=figblue!52},
  horoline/.style={densely dashed,line width=.48pt,draw=figgray!75},
  guide/.style={densely dotted,line width=.36pt,draw=figgray!42},
  lab/.style={font=\footnotesize,text=black!80,inner sep=1.1pt},
  smalllab/.style={font=\scriptsize,text=black!78,inner sep=.8pt},
  graylab/.style={font=\footnotesize,text=figgray,inner sep=1pt},
  smallgraylab/.style={font=\scriptsize,text=figgray,inner sep=.7pt},
  figuretitle/.style={font=\footnotesize\bfseries,text=figblue,
    fill=white,rounded corners=2pt,inner xsep=3pt,inner ysep=1.8pt},
  callout/.style={font=\footnotesize,text=black!80,align=center,
    fill=white,rounded corners=2pt,inner xsep=3pt,inner ysep=2pt},
  subtree/.style={line width=.44pt,draw=figblue!48},
  terminalband/.style={rounded corners=5pt,draw=figteal!72,
    fill=figteal!5,line width=.60pt,densely dashed},
  panel/.style={rounded corners=6pt,draw=figgray!32,fill=figgray!2,
    line width=.44pt}
}

\title[Spherical averages on the Nagao quotient]
{$K$-spherical horospherical averages on the Nagao quotient:\ tree combinatorics and exact discrepancy}
\author{Sanghoon Kwon}
\address{Department of Mathematics Education, Catholic Kwandong University,
Gangneung 25601, Republic of Korea}
\email{skwon@cku.ac.kr}
\date{September 15, 2026}
\thanks{
This work was supported by the Basic Science Research Program through the
National Research Foundation of Korea (NRF), funded by the Ministry of
Education (No.~RS-2025-25415913).}

\keywords{Nagao quotient, horospherical averages, unipotent orbits,
Bruhat--Tits tree, rooted tree combinatorics, exact discrepancy,
\(K\)-spherical equidistribution, function-field continued fractions}

\subjclass[2020]{Primary 37A17; Secondary 20E08, 20G25, 11J70}

\begin{document}

\begin{abstract}
Let \(q\) be a prime power, \(F=\mathbb F_q(\!(t^{-1})\!)\),
\(G=\mathrm{SL}_2(F)\),
\(\Gamma=\mathrm{SL}_2(\mathbb F_q[t])\), and
\(K=\mathrm{SL}_2(\mathbb F_q[\![t^{-1}]\!])\), and let \(U<G\) be the upper
unipotent subgroup.  We study right \(K\)-spherical averages along \(U\) on
\(X=\Gamma\backslash G\).  Expanding translates of compact \(U\)-orbits and
compact-open F{\o}lner-ball averages become terminal layers of rooted
descendant shadows in the Bruhat--Tits tree.  In the even sector, we compute
the Haar height law and
signed finite-scale discrepancy exactly.  This yields \(K\)-spherical
equidistribution for compact-orbit translates
and, for irrational boundary endpoints, for F{\o}lner-ball averages.  For a
depth-\(N\) shadow rooted at height \(k\), with cutoff \(M=N-k\ge0\),
bounded-profile errors are \(O_q(q^{-M})\) in the backward state and
\(O_q(q^{-M/2})\) uniformly for moving roots, while the
shadow law eventually agrees exactly with the Haar law on every fixed finite
height window.  For \(|\Phi(2m)|\le Cq^{\alpha m}\), \(\alpha<2\), three rate
regimes arise, with a linear-in-scale factor at \(\alpha=1\) and explicit
moving-root dependence.  Artin continued-fraction digits eventually encode the
cutoff and these rates excursion by excursion through individual digit
degrees.
\end{abstract}

\maketitle
\tableofcontents

\section{Introduction}

\subsection{From horospherical averages to tree shadows}
Unipotent and horospherical dynamics on homogeneous spaces are commonly studied
through orbit closures, invariant measures, non-escape of mass, and
equidistribution.  The results of Dani, Ratner, Margulis--Tomanov, and Mohammadi
form the general homogeneous-dynamical background
\cite{Dani1981,Dani1984,Dani1986,Ratner1991Duke,Ratner1991Annals,
MargulisTomanov1994,Mohammadi2011}.  For groups acting on trees,
Ciobotaru--Finkelshtein--Sert proved measure-rigidity results and
equidistribution statements for large compact horospherical orbits
\cite[Proposition~1.6 and Corollary~1.7]{CFS2021}.  They subsequently studied
non-escape of mass and equidistribution for dense horospherical orbits along
good F{\o}lner sequences, using the quotient-graph Markov chain and moving
starting points \cite{CFS2022}.  More specifically, the oriented transition
probabilities on a Nagao ray appear in
\cite[Example~3.1]{CFS2022}, the shadow-to-chain correspondence in
\cite[Lemma~3.9]{CFS2022}, and total-variation convergence with moving initial
states in \cite[Lemma~3.6]{CFS2022}.  The same work establishes an
\(o(r^{-n})\) total-variation rate for projected spheres on geometrically
finite quotients, for some \(r>1\) that can be made effective
\cite[Theorem~E(3)]{CFS2022}, and relates projected-horospherical rates to a
geometric Diophantine exponent \cite[Remark~1.5]{CFS2022}.  These results
provide the most directly relevant general framework for the two families
considered here.  In a complementary direction, Kwon established exponential
mixing for geodesic translation and effective weighted orbit counting on
Bruhat--Tits trees under the non-arithmeticity, fullness, and weighted
spectral-gap hypotheses of that work \cite{Kwon2018}.  That setting concerns
geodesic correlations and counting rather than the rooted descendant shadows
computed below.

There are two convention differences worth making explicit.  The cited tree
papers use the left quotient \(G/\Gamma\) and the full horospherical group,
whereas we use the right quotient \(\Gamma\backslash G\) and its unipotent
radical \(U\).  Inversion identifies the two quotient conventions.  On
right \(K\)-invariant observables the additional compact diagonal factor in
the full horospherical group is invisible; this comparison is detailed in
Section~\ref{sec:spherical-dictionary}.  We nevertheless prove the orbit
dichotomy needed here from the local-field homogeneous theorem in
\cite[Corollary~4.2]{Mohammadi2011}, rather than inferring a statement for
\(U\) directly from a theorem formulated for the larger tree group.

Here we focus on a specialized quantitative problem.  Let \(q\) be a prime power,
set \(F=\Floc\) and \(\cO=\Fq[\![t^{-1}]\!]\), and put
\[
G=\mathrm{SL}_2(F),\qquad
\Gamma=\mathrm{SL}_2(\Fq[t]),\qquad
K=\mathrm{SL}_2(\cO),\qquad
X=\Gamma\backslash G.
\]
We write \(\mu_X\) for the normalized \(G\)-invariant probability measure on
\(X\) and restrict attention to right \(K\)-invariant observables.  Under this
spherical restriction, the quotient \(X/K\) is encoded by the quotient of the
Bruhat--Tits tree \(\Tree\), and the Nagao quotient graph is a ray.  Consequently,
a \(K\)-spherical observable is a function of one height variable.  The averaging
problem then becomes a problem about height distributions in finite rooted
shadows of \(\Tree\).

Let \(h_K:X\to2\ZZ_{\ge0}\) be the quotient-height map
defined in Section~\ref{sec:spherical-dictionary}.  A \emph{height shell} is a
fiber \(h_K^{-1}(j)\).  We say that probability measures \(\lambda_N\) converge
\emph{\(K\)-spherically} to \(\mu_X\) when
\((h_K)_*\lambda_N\to(h_K)_*\mu_X\) in total variation.  Equivalently, their
averages converge uniformly over right \(K\)-invariant test functions of
supremum norm at most one.  This is the precise sense of equidistribution used
throughout; no convergence against general, non-spherical observables is
claimed.

The Nagao ray goes back to Nagao's theorem and Serre's tree interpretation of
arithmetic groups over function fields; useful tree-lattice and
fundamental-domain references include
\cite{Nagao1959,SerreTrees,Tits1979,Lubotzky1994,BassLubotzky2001,Kwon2020}.
Function-field continued fractions code modular-ray geodesics; we use the
normalizations of \cite{Paulin2002,BroisePaulin2007}; see also
\cite{PaulinShapira2020} for related arithmetic applications.  Related counting and
equidistribution results on trees and over local function fields include
\cite{BroiseParkkonenPaulin2019,KwonLim2021,Bravo2024,ParkkonenPaulin2024}.

The general measure-rigidity and non-spherical equidistribution theorems serve
as background.  Our analysis specializes to the \(\mathrm{SL}_2\) even sector
and derives explicit finite-scale formulas for the resulting \(K\)-spherical
Nagao chain.  More precisely, we obtain:
\begin{enumerate}[label=(\roman*),leftmargin=2.2em]
\item an explicit Haar height law in the even \(\mathrm{SL}_2\)-sector;
\item exact shell formulas, including a top-shell-minus-tail discrepancy
identity and a first-turn decomposition;
\item an exact single-shadow formula for every expanding compact
\(U\)-orbit translate;
\item eventual equality, rather than only convergence, for compactly
supported spherical tests; and
\item scale-by-scale bounds for standard F{\o}lner-ball averages, including
controlled unbounded profiles, the transition at growth exponent \(\alpha=1\),
and excursion-by-excursion dependence on individual continued-fraction digit
degrees.
\end{enumerate}
The additional information available in this specialized setting consists of
exact spherical identities and their arithmetic finite-scale consequences.
The formulas identify the error at each scale, give eventual equality on every
fixed finite height window, and express the orbit-dependent cutoff through
individual continued-fraction excursions.  The required orbit dichotomy is
supplied by Mohammadi's homogeneous orbit-closure theorem cited above.  In the
estimates below, the powers of \(q\) and the dependence on the moving root and
continued-fraction data are displayed explicitly; the permitted dependence of
each multiplicative implicit constant is recorded with the corresponding
estimate.

For a depth-\(N\) descendant shadow rooted at quotient height \(k\), define its
cutoff by \(M=N-k\).  Whenever \(M\ge0\), the exact signed-discrepancy formulas
imply a total-variation bound \(O_q(q^{-M/2})\), uniformly in the root height
and orientation, with the sharper bound \(O_q(q^{-M})\) in the backward case.
The comparison with \cite{CFS2022} is one of scope and resolution.  That work
treats projected full spheres on general geometrically finite tree quotients
and gives a rate through an effective \(r>1\), together with a formulation in
terms of a geometric Diophantine exponent.  The present Nagao
\(K\)-spherical model instead concerns rooted descendant terminal layers and
retains exact signed discrepancies, explicit powers of \(q\), and individual
continued-fraction digit data.  Since the averaging objects and levels of
generality differ, the two quantitative statements are complementary.

\subsection{Two dynamical families}
We treat two families in parallel.  First, let \(Y=\Gamma gU\subset X\) be a
compact right \(U\)-orbit, where
\[
U=\left\{u(s)=\begin{pmatrix}1&s\\0&1\end{pmatrix}:s\in F\right\}.
\]
For the diagonal element
\[
a_n=\begin{pmatrix}t^n&0\\0&t^{-n}\end{pmatrix}\in G,
\]
the expanding direction on the right quotient \(\Gamma\backslash G\) is
\(a_{-n}\).  For \(n\ge0\), we therefore set
\[
\lambda_{Y,n}^{\mathrm{per}}(f)
  =\int_Y f(y a_{-n})\,d\nu_Y(y),
\]
where \(\nu_Y\) is the normalized Haar measure on \(Y\); the superscript
\(\mathrm{per}\) stands for periodic, equivalently compact-orbit.  At the level
of \(K\)-spherical pushforwards, every compact \(U\)-orbit is represented by a
unique standard orbit \(Y_m=\Gamma a_mU\).  If \(n>m\), the translated measure
has the exact spherical law
\[
(h_K)_*\lambda_{Y,n}^{\mathrm{per}}
   =\mu^{\fwd}_{1,\,2(n-m)-1}.
\]
The sign is essential: with this right-action convention, translation by
\(a_{-n}\) is expanding, whereas \(Ya_n\) contracts the standard closed
horosphere in spherical projection.

Here and below \(\mu^{\fwd}_{k,N}\) and \(\mu^{\bwd}_{k,N}\) denote two
normalized height laws.  They come from depth-\(N\) non-backtracking descendant
shadows rooted at height \(k\), with the incoming edge pointing respectively
toward or away from the cusp; the formal definition is given in
Section~\ref{sec:rooted-tree}.

Second, let \(x=\Gamma g\in X\) have dense right \(U\)-orbit.  Since \(xU\) has
no finite invariant measure, normalize additive Haar measure on \(F\) by
\(ds(\cO)=1\).  For \(N\ge0\), put
\[
U_N=\{u(s):s\in t^N\cO\},
\]
and define the standard F{\o}lner-ball averages
\[
\lambda_{x,N}^{\mathrm{den}}(f)
  =q^{-N}\int_{t^N\cO}f(xu(s))\,ds.
\]
Here \(\mathrm{den}\) stands for dense-orbit.
The ball \(t^N\cO\) is the union of \(q^N\) cosets modulo \(\cO\), and those
cosets parametrize exactly one terminal descendant layer.  Identify
\(\partial\Tree\) with \(\mathbb P^1(F)\), with \(\infty=[1:0]\), and write
\(\xi_g=g\infty\) for the endpoint fixed by \(gUg^{-1}\).  Viewing \(gK\) as
an even vertex of \(\Tree\), the root is the \(N\)-th vertex of the geodesic
from \(gK\) to \(\xi_g\).
Its quotient height \(k_N(x)\) moves with \(N\).  The orientation
\(\varepsilon_N(x)\in\{\fwd,\bwd\}\) records the type of the incoming rooted
edge: \(\fwd\) means that it points toward the cusp, whereas \(\bwd\) means
that it points away from the cusp.  The relevant cutoff is
\[
M_N(x)=N-k_N(x).
\]
For fixed \(x\), we abbreviate
\(k_N=k_N(x)\), \(\varepsilon_N=\varepsilon_N(x)\), and
\(M_N=M_N(x)\).
The cutoff measures how far the depth-\(N\) shadow has progressed past its root
height toward the bottom of the quotient ray.  For an \emph{irrational}
endpoint, meaning \(\xi_g\notin\mathbb P^1(\Fq(t))\), one has
\(M_N\to\infty\); rationality is independent of the chosen lift \(g\).
The constant and increasing blocks of \(M_N\) correspond, respectively, to
the ascending and descending halves of the continued-fraction cusp excursions.

We next record the fixed-root, moving-root, and excursion-wise estimates.  Write
\(\rho^{\ev}\) for the even limiting law displayed in \eqref{eq:rhoev} below,
and put
\[
\ang\Phi\rang_{\rho^{\ev}}
  =\sum_{m\ge0}\Phi(2m)\rho^{\ev}(2m).
\]
First fix a forward root height \(k\).  For a height profile \(\Phi\), suppose
that \(|\Phi(2m)|\le Cq^{\alpha m}\) for some \(C>0\) and \(\alpha<2\), and put
\(\beta=(2-\alpha)/2\).  Then, for \(N\ge k\) with \(k+N\) even,
Proposition~\ref{prop:effective-forward} gives
\begin{equation}\label{eq:intro-fixed-forward-rates}
\left|\int\Phi\,d\mu^{\fwd}_{k,N}
      -\ang\Phi\rang_{\rho^{\ev}}\right|
\ll_{q,\alpha,k}
\begin{cases}
Cq^{-N/2},&\alpha<1,\\
CNq^{-N/2},&\alpha=1,\\
Cq^{-\beta N},&1<\alpha<2.
\end{cases}
\end{equation}
The linear factor at \(\alpha=1\) is the borderline geometric-series loss in
the first-turn decomposition.  For moving roots and bounded profiles,
Proposition~\ref{prop:moving-TV} removes the dependence on a fixed initial
height and, whenever \(M_N\ge0\), gives the uniform error
\(O_q(q^{-M_N/2})\), with the sharper \(O_q(q^{-M_N})\) in a backward state.

The moving-root estimate extends to controlled unbounded profiles without
hiding the dependence on \(k_N\).  Put
\[
E_N(\Phi;x)=
\left|\lambda_{x,N}^{\mathrm{den}}(\Phi\circ h_K)
      -\ang\Phi\rang_{\rho^{\ev}}\right|.
\]
Under the same growth hypothesis, Theorem~\ref{thm:dense-growth} gives,
whenever \(M_N\ge0\),
\begin{equation}\label{eq:intro-moving-growth-rates}
E_N(\Phi;x)\ll
\begin{cases}
Cq^{-M_N/2},&\alpha<1,\\
C(M_N+k_N+1)q^{-M_N/2},&\alpha=1,\\
Cq^{-\beta M_N+(\alpha-1)k_N},&1<\alpha<2.
\end{cases}
\end{equation}
Here the implicit constant depends only on \(q\) when \(\alpha=1\), and only on
\(q,\alpha\) otherwise; in particular, no dependence on \(x,N\), or \(k_N\) is
hidden.  Since \(M_N+k_N=N\), the critical loss is \(N+1\), equivalently
logarithmic in the ball volume \(q^N\).  Thus convergence at \(\alpha=1\)
follows whenever \((N+1)q^{-M_N/2}\to0\).  In the last regime, a convenient
sufficient condition for convergence is
\[
k_N\le\vartheta M_N\quad\hbox{eventually},
\qquad
\vartheta<\frac{2-\alpha}{2(\alpha-1)}.
\]

Corollary~\ref{cor:cf-rates} expresses the same information on individual
continued-fraction excursions.  If \(d_r\) is the degree of the \(r\)-th Artin
digit and \(B_r\) the corresponding return scale, then, for
\(0\le j\le d_r-1\),
\begin{equation}\label{eq:intro-cf-rates}
E_{B_r+j}(\Phi;x)\ll_{q,\alpha}Cq^{-\beta B_r}
\end{equation}
on the ascending half.  On the descending half, where
\(N=B_r+d_r+j\), one has
\begin{equation}\label{eq:intro-cf-descending-rate}
E_{B_r+d_r+j}(\Phi;x)\ll
\begin{cases}
Cq^{-B_r/2-j},&\alpha<1,\\
C(B_r+d_r+j+1)q^{-B_r/2-j},&\alpha=1,\\
Cq^{-\beta B_r+(\alpha-1)d_r-j},&1<\alpha<2.
\end{cases}
\end{equation}
The implicit constant has the same dependence as in
\eqref{eq:intro-moving-growth-rates}.  Hence the specialized Nagao formula
records the degree \(d_r\) of the digit associated with the excursion currently
being traversed.  This excursion-level arithmetic refinement complements the
general formulation through a geometric Diophantine exponent in
\cite{CFS2022}.  The three regimes arise from the same geometric sum in the
first-turn decomposition.  The dense-orbit construction may be summarized by
\[
x=\Gamma g\longmapsto \xi_g
 \longmapsto \bigl(k_N(x),\varepsilon_N(x),M_N(x)\bigr)
 \longmapsto \mu^{\varepsilon_N(x)}_{k_N(x),N}.
\]

Although dynamically different, the two families share the same spherical
combinatorial engine:
\[
\begin{gathered}
\text{horospherical average on }\Gamma\backslash G
\longrightarrow
\text{spherical pushforward on }\Gamma\backslash G/K\\
\longrightarrow
\text{height law on the Nagao ray}.
\end{gathered}
\]

\subsection{The rooted limiting law}
The group \(\mathrm{SL}_2(F)\) preserves the bipartition of the Bruhat--Tits tree.
After fixing the base vertex, we work in the even sector.  The limiting height
law on \(2\ZZ_{\ge0}\) is
\begin{equation}\label{eq:rhoev}
\rho^{\ev}(0)=\frac{q-1}{q},\qquad
\rho^{\ev}(2m)=(q^2-1)q^{-2m-1}\quad (m\ge 1).
\end{equation}
This distribution has two interpretations.  It is the limit of the rooted shadow
distributions, and it is also the \(K\)-spherical pushforward of the
\(G\)-invariant probability measure on the Nagao quotient.

\subsection{Main results}
The first result is purely combinatorial.  The phrase \emph{admissible parity}
means that all terminal heights under consideration lie in \(2\ZZ_{\ge0}\); for a
root at height \(k\) and depth \(N\), this is the condition \(k+N\equiv 0\pmod2\).

\begin{theorem}[Exact rooted shadow discrepancy]\label{thm:rooted-shadow}
Let \(\mu^{\bwd}_{k,N}\) and \(\mu^{\fwd}_{k,N}\) be the normalized height
distributions of the backward and forward depth-\(N\) descendant shadows whose
current root has height \(k\), where \(k\ge0\) in the backward case and
\(k\ge1\) in the forward case.  Assume admissible parity.  For
\(\Phi\in L^1(\rho^{\ev})\), put
\[
\ang\Phi\rang_{\rho^{\ev}}=\sum_{m\ge0}\Phi(2m)\rho^{\ev}(2m).
\]
For a backward initial state with \(M=N-k\ge0\), one has the exact identity
\begin{equation}\label{eq:intro-back-error}
\int \Phi\,d\mu^{\bwd}_{k,N}-\ang\Phi\rang_{\rho^{\ev}}
 = q^{-M-1}\Phi(M)-\sum_{2m\ge M+2}\rho^{\ev}(2m)\Phi(2m).
\end{equation}
For a forward initial state,
\begin{align}\label{eq:intro-forward-error}
\int \Phi\,d\mu^{\fwd}_{k,N}-\ang\Phi\rang_{\rho^{\ev}}
&=q^{-N}\bigl(\Phi(k+N)-\ang\Phi\rang_{\rho^{\ev}}\bigr)  \\
&\quad +(q-1)\sum_{r=0}^{N-1}q^{-r-1}
   \left(\int \Phi\,d\mu^{\bwd}_{k+r-1,N-r-1}
          -\ang\Phi\rang_{\rho^{\ev}}\right). \nonumber
\end{align}
Consequently, if \(\Phi\in L^1(\rho^{\ev})\), then, for each fixed initial
height, the backward and forward shadow averages converge to
\(\ang\Phi\rang_{\rho^{\ev}}\) as \(N\to\infty\) through admissible parity.
If \(|\Phi(2m)|\le Cq^{\alpha m}\) with \(\alpha<2\), then the convergence is
exponential, with explicit exponents given in
Propositions~\ref{prop:effective-back} and \ref{prop:effective-forward}.
\end{theorem}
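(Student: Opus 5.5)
The plan is to make the Nagao chain explicit and read both identities off first-step (\emph{first-turn}) decompositions of the rooted shadows.

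\textbf{The chain.} The quotient of \(\Tree\) by \(\Gamma\) is the Nagao ray with vertices \(v_0,v_1,\dots\) (height increasing toward the cusp).
\begin{itemize}
\item At \(v_0\), all \(q+1\) edges go to \(v_1\).
\item At \(v_j\) with \(j\ge1\), one edge goes up to \(v_{j+1}\) and \(q\) edges go down to \(v_{j-1}\).
\end{itemize}
Lifting to non-backtracking descendants, this gives the following transitions.
\begin{itemize}
\item \emph{Backward state} (arrived from above) at height \(j\ge1\): all \(q\) children lie at height \(j-1\), again in backward state.
\item \emph{Height \(0\)}: all \(q\) children lie at height \(1\), in forward state.
\item \emph{Forward state} (arrived from below) at height \(j\ge1\): of the \(q\) children, one goes up (forward state) and \(q-1\) go down (backward state).
\end{itemize}
Since each vertex has exactly \(q\) non-backtracking children, the normalized terminal law \(\mu^{\varepsilon}_{k,N}\) is the law after \(N\) steps of this Markov chain with uniform choice among the children. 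I would verify this description from the tree model of Section~\ref{sec:rooted-tree}.

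\textbf{Forward identity.} Decompose according to the first down-turn. With probability \(q^{-N}\) the walk goes straight up and ends at height \(k+N\). Otherwise, with probability \(q^{-r}\cdot\frac{q-1}{q}\) for \(0\le r\le N-1\), it goes up \(r\) steps and then turns down. It then sits at height \(k+r-1\) in backward state with \(N-r-1\) steps remaining. Hence
\[
\mu^{\fwd}_{k,N}=q^{-N}\delta_{k+N}+(q-1)\sum_{r=0}^{N-1}q^{-r-1}\mu^{\bwd}_{k+r-1,N-r-1}.
\]
The weights sum to \(1\), so subtracting \(\ang\Phi\rang_{\rho^{\ev}}\) term by term gives \eqref{eq:intro-forward-error} immediately.

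\textbf{Backward identity.} A backward state at height \(k\) with depth \(N\) descends deterministically to height \(0\) in \(k\) steps. Its law therefore depends only on \(M=N-k\); call it \(b_M\). The claim \eqref{eq:intro-back-error} is equivalent to
\[
b_M(2m)=\rho^{\ev}(2m)\ (2m<M),\qquad b_M(M)=\rho^{\ev}(M)+q^{-M-1},\qquad b_M=0 \text{ above } M.
\]
This is consistent with total mass \(1\), because \(\sum_{2m\ge M+2}\rho^{\ev}(2m)=q^{-M-1}\).

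I would prove this by induction on even \(M\).
\begin{itemize}
\item Base case \(M=0\): \(b_0=\delta_0\), and indeed \(\rho^{\ev}(0)+q^{-1}=1\).
\item Recursion: for \(M\ge2\), the walk steps from \(0\) to height \(1\) in forward state with \(M-1\) steps left. Applying the first-turn decomposition at height \(1\) gives
\[
b_M=q^{-(M-1)}\delta_M+(q-1)\sum_{r=0}^{M-2}q^{-r-1}\,b_{M-2-2r},
\]
because after \(r\) up-steps and one down-turn the walk is backward at height \(r\) with \(M-2-r\) steps left, i.e.\ cutoff \(M-2-2r\).
\item Inductive step: substitute the inductive form of each \(b_{M-2-2r}\) and check the three cases \(2m<M\), \(2m=M\), and \(2m>M\). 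For a fixed \(2m<M\), the contributions are geometric sums in \(r\): from \(b_{M-2-2r}\) with \(M-2-2r>2m\) one gets \(\rho^{\ev}(2m)\), and from \(M-2-2r=2m\) one gets \(\rho^{\ev}(2m)+q^{-2m-1}\). These telescope to \(\rho^{\ev}(2m)\).
\end{itemize}
This bookkeeping, especially the boundary term \(m=0\), where \(\rho^{\ev}(0)\) has a different form, is the main obstacle. As a fallback, I would count terminal vertices directly: a terminal vertex lies at height \(2m<M\) exactly according to the pattern of the last up-run.

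\textbf{Convergence and rates.} For the backward case, \eqref{eq:intro-back-error} tends to \(0\) as \(M\to\infty\) for \(\Phi\in L^1(\rho^{\ev})\):
\begin{itemize}
\item the tail sum is the tail of a convergent series;
\item the term \(q^{-M-1}\Phi(M)\) is dominated by \(\rho^{\ev}(M)|\Phi(M)|\), up to a constant.
\end{itemize}
For the forward case, \eqref{eq:intro-forward-error} is a \(q^{-r}\)-weighted average of backward errors with cutoffs \(N-k-2r\), plus a \(q^{-N}\) term.
\begin{itemize}
\item The terms with \(r\le (N-k)/2\) vanish in the limit by dominated convergence.
\item For the remaining terms and for \(q^{-N}\Phi(k+N)\), membership in \(L^1(\rho^{\ev})\) controls \(q^{-N}|\Phi(k+N)|\) once \(k\) is fixed.
\item Terms with negative cutoff are bounded using the exact law \(\delta_0\)-type behaviour, i.e.\ a bounded error times \(q^{-r}\).
\end{itemize}
Under the growth bound \(|\Phi(2m)|\le Cq^{\alpha m}\), inserting it into these geometric sums produces the exponential rates, which is deferred to Propositions~\ref{prop:effective-back} and~\ref{prop:effective-forward}.
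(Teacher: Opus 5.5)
\textbf{Verdict: genuine but repairable gap in the backward induction.} Your forward identity is exactly the paper's first-turn decomposition (Proposition~\ref{prop:first-turn}). Your plan for the backward identity is also the paper's route: reduce to the height-one forward process and induct on $M$ through the renewal recursion, as in Proposition~\ref{prop:forward-one-shell}. The gap is in your inductive step. In
\[
b_M=q^{-(M-1)}\delta_M+(q-1)\sum_{r=0}^{M-2}q^{-r-1}b_{M-2-2r},
\]
the index $M-2-2r$ is negative once $r>(M-2)/2$. Those branches have not returned to height $0$ when time runs out, so by Lemma~\ref{lem:back-before-bottom} their law is the point mass $\delta_{2r+2-M}$, not your inductive form. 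Your check of an interior shell $0<2m<M$ leaves these terms out. Put $r_0=(M-2-2m)/2$. The two contributions you list then sum to
\[
\rho^{\ev}(2m)\bigl(1-q^{-r_0-1}\bigr)+(q-1)q^{-r_0-2m-2}
=\rho^{\ev}(2m)-(q-1)q^{-(M+2m)/2},
\]
so they do not telescope to $\rho^{\ev}(2m)$. For instance, $M=4$, $2m=2$ gives $(q-1)q^{-2}$ instead of $(q^2-1)q^{-3}$. The deficit is exactly the weight $(q-1)q^{-r_1-1}$ of the unique late turn $r_1=(M+2m-2)/2$, whose point mass lands at height $2m$. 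Adding that term closes the induction; it is the second sum in the paper's renewal identity \eqref{eq:C-renewal}. The shell $m=0$, which you flagged as the obstacle, is actually the case where your two terms already suffice. There $b_0=\delta_0$ carries mass $1=\rho^{\ev}(0)+q^{-1}$, and no late-turn atom lands at height $0$.

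\textbf{Secondary issue in the convergence argument.} The late-turn terms with negative cutoff do not have bounded error when $\Phi\in L^1(\rho^{\ev})$ is unbounded; their error is $\Phi(2r-M)-\ang\Phi\rang_{\rho^{\ev}}$, with $M=N-k$. What works for fixed $k$ is the following.
\begin{enumerate}
\item Since $r\le N-1=M+k-1$, one has $(q-1)q^{-r-1}\le \frac{q^{k-1}}{q+1}\,\rho^{\ev}(2r-M)$.
\item The weight these terms place on any fixed height $j$ is $(q-1)q^{-(M+j)/2-1}\to0$.
\end{enumerate}
Dominated convergence on the height space then applies. The paper packages exactly this as Lemma~\ref{lem:fixed-domination}, combined with the pointwise shell convergence from Proposition~\ref{prop:finite-window-exact}.
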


The two exact identities in Theorem~\ref{thm:rooted-shadow} are proved in
Propositions~\ref{prop:back-discrepancy} and
\ref{prop:forward-discrepancy}; convergence and rates are then supplied by
Theorem~\ref{thm:rooted-equidistribution} and
Propositions~\ref{prop:effective-back}--\ref{prop:effective-forward}.

The next theorem is the interface between the tree calculation and homogeneous
dynamics.  We write \(\mu_X\) for the normalized \(G\)-invariant probability
measure on \(X\), and use the spherical height map
\(h_K:X\to2\ZZ_{\ge0}\) introduced above.

\begin{theorem}[Spherical shadow principle]
\label{thm:shadow-principle}
Let \(\lambda_T\) be a sequence of probability measures on \(X\), indexed by a
scale parameter \(T\).  Assume that, for each \(T\), its height pushforward
admits a finite rooted shadow decomposition
\[
(h_K)_*\lambda_T
 =\sum_{\ell=1}^{L_T}c_{\ell,T}
   \mu^{\varepsilon_{\ell,T}}_{k_{\ell,T},D_{\ell,T}},
\qquad
c_{\ell,T}\ge0,
\qquad
\sum_{\ell=1}^{L_T}c_{\ell,T}=1,
\]
where each component has admissible parity and
\(\varepsilon_{\ell,T}\in\{\fwd,\bwd\}\), with
\(k_{\ell,T},D_{\ell,T}\ge0\) and \(k_{\ell,T}\ge1\) whenever
\(\varepsilon_{\ell,T}=\fwd\).  Define
\[
M_T=\min_{1\le\ell\le L_T}\bigl(D_{\ell,T}-k_{\ell,T}\bigr),
\qquad\text{and assume that }M_T\ge0.
\]
Then for every bounded right \(K\)-invariant observable
\(f(\Gamma g)=\Phi(h_K(\Gamma g))\),
\[
\left|\int_X f\,d\lambda_T-\int_X f\,d\mu_X\right|
\ll_q \|\Phi\|_\infty q^{-M_T/2}.
\]
If all shadow components are backward components, the sharper bound
\(O_q(\|\Phi\|_\infty q^{-M_T})\) holds.  In particular, if \(M_T\to\infty\), then
\(\lambda_T\) equidistributes toward \(\mu_X\) after testing against bounded right
\(K\)-invariant functions.
\end{theorem}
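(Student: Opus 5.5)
The argument reduces everything to the exact discrepancy identities of Theorem~\ref{thm:rooted-shadow}, applied component by component. By the definition of \(h_K\) and the identification of \((h_K)_*\mu_X\) with \(\rho^{\ev}\) (the Haar height law, taken as established in Section~\ref{sec:spherical-dictionary}), we have
\[
\int_X f\,d\lambda_T-\int_X f\,d\mu_X
=\sum_{\ell=1}^{L_T}c_{\ell,T}\Bigl(\int\Phi\,d\mu^{\varepsilon_{\ell,T}}_{k_{\ell,T},D_{\ell,T}}-\ang\Phi\rang_{\rho^{\ev}}\Bigr).
\]
Since the \(c_{\ell,T}\) are nonnegative and sum to one, it suffices to prove the following uniform single-shadow bound: for each admissible shadow with \(M=D-k\ge0\),
\[
\Bigl|\int\Phi\,d\mu^{\varepsilon}_{k,D}-\ang\Phi\rang_{\rho^{\ev}}\Bigr|\ll_q\|\Phi\|_\infty q^{-M/2},
\]
with \(q^{-M}\) in place of \(q^{-M/2}\) in the backward case. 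The constant must not depend on \(k\) or \(D\). Monotonicity of \(M\mapsto q^{-M/2}\) then turns the per-component bound at \(D_{\ell,T}-k_{\ell,T}\ge M_T\) into the stated bound.

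\textbf{Backward case.} Use \eqref{eq:intro-back-error}. The first term is at most \(q^{-M-1}\|\Phi\|_\infty\). The tail is at most \(\|\Phi\|_\infty\sum_{2m\ge M+2}(q^2-1)q^{-2m-1}\). This is a geometric series, \(\ll q^{-M-1}\|\Phi\|_\infty\), uniformly in \(k\). So the total error is \(O_q(\|\Phi\|_\infty q^{-M})\).

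\textbf{Forward case.} Use \eqref{eq:intro-forward-error} with \(N=D\). The first term is at most \(2\|\Phi\|_\infty q^{-D}\le 2\|\Phi\|_\infty q^{-M}\). The \(r\)-th summand involves a backward shadow with root height \(k+r-1\) and depth \(D-r-1\), whose cutoff is \(M_r=D-r-1-(k+r-1)=M-2r\).

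For \(r\le M/2\) we have \(M_r\ge0\), and the backward bound gives a contribution \(\ll\|\Phi\|_\infty q^{-r-1}q^{-(M-2r)}=\|\Phi\|_\infty q^{-M+r-1}\). Summing over \(0\le r\le M/2\) gives \(\ll\|\Phi\|_\infty q^{-M/2}\); this geometric sum is dominated by its last term.

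For \(r>M/2\) the backward bound is unavailable, so use the trivial bound \(2\|\Phi\|_\infty\) on each discrepancy. The weights \((q-1)q^{-r-1}\) then sum to \(\ll q^{-M/2}\).

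Parity is preserved: \(k+r-1+D-r-1\equiv k+D\), so each inner shadow is admissible and Theorem~\ref{thm:rooted-shadow} applies to it. The root height \(k+r-1\ge0\) uses \(k\ge1\) in the forward case.

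The main care point is this splitting at \(r\approx M/2\). It is exactly where the exponent \(M/2\) comes from, and it is what keeps the bound independent of \(k\). One must check that the backward identity really applies to every inner shadow with \(M_r\ge0\), including the boundary cases \(M_r=0\) and small depths. If some degenerate depth-\(0\) shadow falls outside the stated range of \eqref{eq:intro-back-error}, those terms can again be handled by the trivial bound, and this does not affect the estimate.

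The final statement follows directly: when \(M_T\to\infty\) the bound tends to zero uniformly over \(\|\Phi\|_\infty\le1\).
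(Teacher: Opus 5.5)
Your proof is correct and follows the paper's argument. The paper also reduces to a per-component bound via the triangle inequality. It then cites Proposition~\ref{prop:moving-TV}, whose proof is exactly your backward top-shell/tail estimate and your forward first-turn split at \(r\approx M/2\); you simply inline that argument at the level of integrals against \(\Phi\) rather than total-variation norms. Your hedge about degenerate depth-zero inner shadows is unnecessary: the only such case with nonnegative cutoff is \(\mu^{\bwd}_{0,0}=\delta_0\), which the \(M=0\) case of the backward identity already covers.
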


\begin{proof}
For a right \(K\)-invariant observable \(f=\Phi\circ h_K\),
\[
\int_X f\,d\lambda_T-\int_X f\,d\mu_X
=\int \Phi\,d(h_K)_*\lambda_T-\int \Phi\,d\rho^{\ev}.
\]
Using the finite shadow decomposition and the triangle inequality, the absolute
value is at most
\[
\sum_{\ell=1}^{L_T}c_{\ell,T}\,\|\Phi\|_\infty\,
\bigl\|\mu^{\varepsilon_{\ell,T}}_{k_{\ell,T},D_{\ell,T}}-\rho^{\ev}\bigr\|_{\TV}.
\]
The uniform moving-root estimate, Proposition~\ref{prop:moving-TV}, gives
\[
\bigl\|\mu^{\varepsilon_{\ell,T}}_{k_{\ell,T},D_{\ell,T}}-\rho^{\ev}\bigr\|_{\TV}
\ll_q q^{-(D_{\ell,T}-k_{\ell,T})/2}\le q^{-M_T/2}.
\]
Averaging over \(\ell\) gives the stated bound.  If every component is backward,
the sharper estimate in Proposition~\ref{prop:moving-TV} gives
\(O_q(\|\Phi\|_\infty q^{-M_T})\).  Finally, by
Proposition~\ref{prop:haar-rho}, \(\int_X f\,d\mu_X=\int \Phi\,d\rho^{\ev}\).
\end{proof}

Theorem~\ref{thm:shadow-principle} is a reusable interface.  Once a dynamical
average is shown to realize a rooted shadow, equidistribution follows from the
combinatorics.

\begin{theorem}[Compact translates and dense-orbit F{\o}lner balls]
\label{thm:two-families}
The two dynamical families have the following exact spherical realizations.
\begin{enumerate}[label=(\roman*)]
\item For every compact right \(U\)-orbit \(Y\), there is an integer \(m=m(Y)\)
such that, for \(n\in\ZZ_{\ge0}\) with \(n>m\),
\[
(h_K)_*\lambda_{Y,n}^{\mathrm{per}}
 =\mu^{\fwd}_{1,\,2(n-m)-1}.
\]
In particular, its cutoff is \(2(n-m)-2\).
\item Let \(x\in X\) and \(N\in\ZZ_{\ge0}\), and let
\(\lambda_{x,N}^{\mathrm{den}}\) be the standard F{\o}lner-ball average over
\(t^N\cO\).  Then
\[
(h_K)_*\lambda_{x,N}^{\mathrm{den}}
 =\mu^{\varepsilon_N(x)}_{k_N(x),N}.
\]
If, for a lift \(x=\Gamma g\), the boundary point \(\xi_g\) is irrational,
equivalently if \(xU\) is dense by Lemma~\ref{lem:irrational-dense}, then
\(M_N=N-k_N(x)\to\infty\).  Hence
\(\lambda_{x,N}^{\mathrm{den}}\) equidistributes \(K\)-spherically toward
\(\mu_X\).  For every bounded profile \(\Phi\), whenever \(M_N\ge0\),
\[
\left|\lambda_{x,N}^{\mathrm{den}}(\Phi\circ h_K)
      -\ang\Phi\rang_{\rho^{\ev}}\right|
 \ll_q\|\Phi\|_\infty q^{-M_N/2};
\]
in particular, this estimate holds for all sufficiently large \(N\).
\end{enumerate}
\end{theorem}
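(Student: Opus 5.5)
The plan is to translate each dynamical average into a uniform measure on a terminal layer of a rooted descendant shadow in \(\Tree\). The height law is then read off from the definition of \(\mu^{\varepsilon}_{k,N}\) in Section~\ref{sec:rooted-tree}, and Theorem~\ref{thm:shadow-principle} together with Proposition~\ref{prop:moving-TV} supplies the rates. Both families rest on one geometric fact. For \(g\in G\) and \(N\ge0\), the set \(\{gu(s)K: s\in t^N\cO\}\) of even vertices is exactly the set of vertices at distance \(N\) from the \(N\)-th vertex \(v_N\) on the ray \([gK,\xi_g)\) that lie outside the branch containing \(\xi_g\). I will prove this first for \(g=e\). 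There \(u(s)K\) depends only on \(s\bmod \cO\), and the ray from \(K\) to \(\infty\) is \(a_{-j}K\), \(j\ge0\), up to the sign convention. The vertices \(u(s)K\) with \(s\in t^N\cO/\cO\) are the \(q^N\) vertices at distance \(N\) below \(a_{-N}K\) that avoid the direction of \(\infty\). This is the standard lattice computation \(u(s)K=a_{-N}\,u(t^{-N}s)\,a_N K\), combined with counting cosets in \(\cO/t^{-N}\cO\). Haar measure \(q^{-N}ds\) on \(t^N\cO\) pushes forward to the uniform measure on these \(q^N\) cosets, hence to the uniform measure on the terminal layer. The general case follows by left-translating by \(g\), which maps the configuration for \(e\) to the one for \(g\) since \(g\infty=\xi_g\).

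For (ii), project to the Nagao ray \(\Gamma\backslash\Tree\). The root \(v_N\) has height \(k_N(x)\). The incoming edge from \(v_{N+1}\), the neighbour toward \(\xi_g\), is excluded, and its orientation relative to the cusp is \(\varepsilon_N(x)\). The heights of the terminal descendants depend only on the root height and on which neighbour is excluded, by the local structure of the quotient (each non-origin vertex has one neighbour toward the cusp and \(q\) away). This yields \((h_K)_*\lambda^{\mathrm{den}}_{x,N}=\mu^{\varepsilon_N}_{k_N,N}\); parity is automatic because the vertices are even. For the divergence \(M_N\to\infty\) in the irrational case, I use Lemma~\ref{lem:irrational-dense} and the geodesic-ray picture. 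The projection of \([gK,\xi_g)\) to the ray eventually does not converge to the cusp, since by \(\Gamma\)-equivariance that happens exactly for rational endpoints. The height \(k_N\) equals the height of the projected point of this ray at time \(N\), and along a ray the height changes by at most one per step. The key claim is that \(N-k_N\to\infty\) unless the ray eventually goes straight up the cusp. Each excursion must return to height \(0\) infinitely often, which forces \(k_N\le N-2\cdot(\text{number of completed returns})\), so \(M_N\) grows. I expect the cleanest version to use the continued-fraction description: \(k_N\) is bounded by the distance from the last return time \(B_r\), which gives \(M_N\ge B_r\to\infty\). Theorem~\ref{thm:shadow-principle}, applied with \(L_T=1\), then gives the rate and the equidistribution.

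For (i), I reduce to \(Y_m=\Gamma a_mU\). Conjugating \(g\) by \(\Gamma\) moves its \(U\)-endpoint to \(\infty\), and right multiplication by the torus and \(U\) normalizes it, so the \(K\)-spherical pushforward depends only on \(m\). The measure \(\nu_{Y_m}\) is the pushforward of Haar measure on the compact fundamental domain \(\Gamma_\infty\backslash U\), which can be taken to be \(\cO\)-translates over a ball \(t^{m'}\cO\) adjusted by \(a_m\). Writing \(a_mu(s)a_{-n}=a_{m-n}u(t^{2n}s)\), the translate becomes a Følner-ball average based at \(a_{m-n}\), whose vertex lies on the cusp ray at height \(2(n-m)\). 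Its endpoint is \(\infty\), so the root moves up the cusp: the root sits at height \(1\) and points forward, with depth \(2(n-m)-1\). The main obstacle is exactly this bookkeeping: checking that the \(\Gamma_\infty=U(\Fq[t])\)-quotient yields one shadow rooted at height \(1\), rather than a mixture, and that \(n>m\) is the precise range. I would verify the base case \(n=m+1\) by hand, where the claim is depth \(1\) with \(q\) terminal vertices at height \(0\) and one at height \(2\), and then induct on \(n\).
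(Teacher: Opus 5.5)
\textbf{Part (ii).} Your treatment follows the paper. It uses the terminal-layer bijection \(t^N\cO/\cO\to\Desc_N(e_N)\) of Proposition~\ref{prop:dense-exact-shadow}, the local height rule, infinitely many returns to height zero for an irrational endpoint (Proposition~\ref{prop:cf-coding} and Corollary~\ref{cor:MN-diverges}), and Proposition~\ref{prop:moving-TV}.

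\textbf{Part (i).} Here you take a different and more economical route. The paper proves a separate lattice-model lemma (Lemma~\ref{lem:standard-compact-shadow}): it represents \(t^{2d}A\backslash F/\cO\) by the polynomial cylinders \(\mathcal P_d\) and matches them with \(\Desc_{2d-1}(e^-)\). You instead recognize the compact-orbit translate as a F{\o}lner-ball average, so (i) becomes a special case of the bijection used in (ii). This works, and it removes the ``mixture'' worry you flag. Since \(F=A\oplus t^{-1}\cO\), the ball \(t^{-2m-1}\cO\) is an exact fundamental domain for \(\Lambda_m=t^{-2m}A\), and the substitution \(s'=t^{2n}s\) gives
\[
\lambda^{\mathrm{per}}_{Y_m,n}=\lambda^{\mathrm{den}}_{\Gamma a_{m-n},\,N'},\qquad N'=2(n-m)-1 .
\]
This makes sense exactly when \(N'\ge0\), i.e.\ \(n>m\). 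Take \(d=n-m\) and the lift \(a_{-d}\). Then \(y_N=\widetilde v_{N-2d}\), so the root \(y_{2d-1}=\widetilde v_{-1}\) has height one and its parent \(\widetilde v_0\) has height zero. Thus \(e_{2d-1}=e^-\) and the state is \((\fwd,1)\). No induction on \(n\) is needed. The paper's route gives an explicit arithmetic, polynomial-cylinder description of the layer; yours gives one terminal-layer lemma serving both families.

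\textbf{Details to correct.}
\begin{enumerate}
\item The identity \(u(s)K=a_{-N}u(t^{-N}s)a_NK\) is false, since \(a_{-N}u(t^{-N}s)a_N=u(t^{-3N}s)\). The correct identity is \(u(s)=\bar a_N u(t^{-N}s)\bar a_N^{-1}\) with \(\bar a_N=[\operatorname{diag}(t^N,1)]\in\bG\).
\item Relatedly, \(a_N\in G\) translates by \(2N\), and the ray from \(v_0\) to \(\infty\) is \(\bar a_jv_0\) (\(j\ge0\)). For \(g=e\) the root of the layer is \(\bar a_Nv_0\), not \(a_{-N}K\); for odd \(N\) it is not of the form \(hK\) with \(h\in G\).
\item In your base case, \(\mu^{\fwd}_{1,1}\) has \(q-1\) terminal vertices at height zero and one at height two, not \(q\) at height zero.
\item The claim that an eventually monotonically ascending projected ray has rational endpoint needs the lifting argument. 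Uniqueness of the upward edge at positive height carries the lifted tail onto a \(\GammaP\)-translate of the standard cusp ray.
\item Passing from a compact orbit to the normal form \(a_m d(\eta)u(s_0)\) uses the converse half of Proposition~\ref{prop:compact-normal-form}: compactness forces a rational endpoint.
\item The reduction to \(Y_m\) survives translation because \(d(\eta)\in K\) commutes with \(a_{-n}\).
\end{enumerate}
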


Part~(i) is Proposition~\ref{prop:compact-shadow}.  Part~(ii) combines
Proposition~\ref{prop:dense-exact-shadow}, Lemma~\ref{lem:irrational-dense},
Corollary~\ref{cor:MN-diverges}, and Theorem~\ref{thm:dense-bounded}.

Both families have a stronger finite-window phenomenon.  If a right
\(K\)-invariant observable has height profile supported in finitely many
shells, then its discrepancy is identically zero once the relevant cutoff has
passed that support.  See Theorem~\ref{thm:compact-support-exact} and
Corollary~\ref{cor:dense-finite-exact}.

\subsection{Organization}
Section~\ref{sec:spherical-dictionary} fixes the homogeneous notation and
proves the spherical Haar law.  Section~\ref{sec:rooted-tree} establishes the
rooted-shadow formulas.  Section~\ref{sec:compact} classifies compact
\(U\)-orbits up to spherical projection and computes their expanding
translates.  Section~\ref{sec:dense} treats standard F{\o}lner-ball averages,
moving roots, and continued-fraction rates.  A brief conclusion records the
scope of the results.

\section*{Acknowledgements}
This work was supported by the Basic Science Research Program through the
National Research Foundation of Korea (NRF), funded by the Ministry of
Education (No.~RS-2025-25415913).

\section{Homogeneous setup and the spherical dictionary}\label{sec:spherical-dictionary}

\subsection{Notation and conventions}
Let \(q\) be a prime power.  Put
\[
\begin{gathered}
A=\Fq[t],\qquad \pi=t^{-1},\qquad
F=\Floc,\qquad \cO=\Fq[\![t^{-1}]\!],\\
G=\mathrm{SL}_2(F),\qquad
\Gamma=\mathrm{SL}_2(\Fq[t])
\end{gathered}
\]
and set
\[
X=\Gamma\backslash G.
\]
The group \(\Gamma\) is the standard nonuniform arithmetic lattice in \(G\);
the Nagao reduction recalled below in particular gives finite covolume.  We
write \(\mu_X\) for the resulting normalized \(G\)-invariant probability
measure.
We normalize the valuation and absolute value by
\[
v_\infty(\pi)=1,\qquad |t|=q.
\]
Thus \(\cO=\{s\in F:|s|\le1\}\), and \(F/A\) has the compact fundamental
domain \(t^{-1}\cO\).  For \(\xi\in F\), its polynomial part
\(\lfloor\xi\rfloor\in A\) is the unique polynomial satisfying
\(\xi-\lfloor\xi\rfloor\in t^{-1}\cO\).

We write points of \(X\) as \(x=\Gamma g\).  All actions below are right actions.
Thus the right \(U\)-orbit of \(x\) is \(xU=\{\Gamma gu:u\in U\}\).  The standard
compact subgroup and the upper unipotent subgroup are
\[
K=\mathrm{SL}_2(\cO),
\qquad
U=\left\{u(s)=\begin{pmatrix}1&s\\0&1\end{pmatrix}:s\in F\right\}.
\]
When an element \(u(s)\) acts on \(\Tree\), the same symbol denotes its
projective image in \(\mathrm{PGL}_2(F)\).
Let \(\Tree\) denote the Bruhat--Tits tree of \(\mathrm{PGL}_2(F)\), whose
lattice model is recalled in the next subsection, and let \(v_0=[\cO^2]\) be
its standard vertex.
For \(r\in F^\times\), set
\[
d(r)=\begin{pmatrix}r&0\\0&r^{-1}\end{pmatrix},
\qquad
M_\infty=\{d(\eta):\eta\in\cO^\times\}.
\]
Identifying \(\partial\Tree\) with \(\mathbb P^1(F)\), the group \(U\) fixes
\(\infty=[1:0]\).  For vertices \(v,w\), define the Busemann cocycle by
\[
\beta_\infty(v,w)
 =\lim_{z\to\infty}\bigl(d_\Tree(v,z)-d_\Tree(w,z)\bigr),
\]
where \(z\) tends to \(\infty\) along any geodesic ray.  Thus
\(\beta_\infty(v,w)=1\) when \(w\) is the neighbor of \(v\) toward
\(\infty\).  The subgroup of \(G\) preserving the horosphere through
\(v_0\)---the elliptic horospherical group attached to this endpoint---is the
Busemann kernel
\[
H_\infty
 =\{h\in\operatorname{Stab}_G(\infty):
       \beta_\infty(v_0,\bar h v_0)=0\}
 =M_\infty U
 =\left\{\begin{pmatrix}\eta&s\\0&\eta^{-1}\end{pmatrix}:
          \eta\in\cO^\times,\ s\in F\right\}.
\]
Here \(\bar h\) denotes the projective image of \(h\).
The compact factor \(M_\infty\subset K\) normalizes \(U\), and multiplication
by a unit preserves every ball \(t^N\cO\).  Consequently, after averaging in
the compact factor, an \(H_\infty\)-average and the corresponding \(U\)-average
have the same value on right \(K\)-invariant functions.  More explicitly, if
\(dm\) is Haar probability on \(M_\infty\), then
\[
\int_{M_\infty}\frac1{q^N}\int_{t^N\cO}f(xm u(s))\,ds\,dm
 =\frac1{q^N}\int_{t^N\cO}f(xu(s))\,ds,
\]
because \(d(\eta)u(s)=u(\eta^2s)d(\eta)\).  This explains why the
present spherical calculation may be formulated using \(U\), whereas the
general tree literature is commonly stated for the full horospherical group.

We use the right quotient \(\Gamma\backslash G\).  Results stated on the left
quotient \(G/\Gamma\) are compared through the inversion homeomorphism
\[
\iota:\Gamma\backslash G\longrightarrow G/\Gamma,
\qquad \iota(\Gamma g)=g^{-1}\Gamma,
\]
which sends a right \(U\)-orbit to a left \(U\)-orbit.  We identify \(U\) with
\((F,+)\) and write \(ds\) for additive Haar measure on \(F\), normalized by
\(ds(\cO)=1\).

For \(n\in\ZZ\), set
\[
a_n=\begin{pmatrix}t^n&0\\0&t^{-n}\end{pmatrix}\in G.
\]
Its projective class is \([\operatorname{diag}(t^{2n},1)]\), and hence it
translates the standard apartment by \(2n\) edges.  In the notation
\(\bar a_N=[\operatorname{diag}(t^N,1)]\) introduced below, the precise
relation is
\[
[a_n]=\bar a_{2n}.
\]
Notice the conjugation formulas
\[
a_nu(s)a_{-n}=u(t^{2n}s),
\qquad
a_{-n}u(s)a_n=u(t^{-2n}s).
\]
Because we work with right actions on \(\Gamma\backslash G\), the expanding
translate of a closed upper-unipotent orbit is by \(a_{-n}\).

For a signed measure \(\nu\) on the discrete height space, we use the total-variation norm
\[
\|\nu\|_{\TV}=\sum_j |\nu(j)|.
\]
With this convention,
\(|\int \Phi\,d\nu|\le \|\Phi\|_\infty\|\nu\|_{\TV}\) for bounded height
profiles \(\Phi\).
We write \(A\ll B\) if \(|A|\le C B\) for a constant \(C\) depending only on the
parameters indicated in the subscript.
We write \(A\asymp B\) when both \(A\ll B\) and \(B\ll A\) hold.

\subsection{The Bruhat--Tits tree and the Nagao ray}
It is useful first to pass to
\[
\bG=\mathrm{PGL}_2(F),
\qquad
\GammaP=\mathrm{PGL}_2(\Fq[t]),
\qquad
\bK=\mathrm{PGL}_2(\cO).
\]
Let \(\Tree\) be the Bruhat--Tits tree of \(\bG\).  Its vertices are homothety
classes of \(\cO\)-lattices in \(F^2\).  Two vertices \([L]\) and \([L']\)
are adjacent if representatives can be chosen so that
\[
\pi L\subsetneq L'\subsetneq L,
\qquad L/L'\simeq\Fq.
\]
In particular, every vertex has \(q+1\) neighbors.  Moreover,
\[
\bG/\bK\simeq V(\Tree).
\]
We denote the graph distance on \(V(\Tree)\) by \(d_\Tree\).
Recall that \(v_0\) is the class of \(\cO^2\), and put
\[
\bar a_n=\begin{bmatrix}t^n&0\\0&1\end{bmatrix}\in\bG,
\qquad
v_n=\bar a_n v_0.
\]
The bipartite type of a vertex is
\[
\operatorname{type}(v)=d_\Tree(v,v_0)\pmod 2.
\]
For nonnegative integers \(n\) and \(N\), the two scale conventions used later
are related by the following dictionary:
\[
\begin{array}{ccl}
a_n\in G&\longmapsto&[a_n]v_0=\bar a_{2n}v_0,
\qquad d_\Tree(v_0,[a_n]v_0)=2n,\\[2mm]
t^N\cO\subset F&\longmapsto&\bar a_Nv_0,
\qquad d_\Tree(v_0,\bar a_Nv_0)=N.
\end{array}
\]
Thus the diagonal index \(n\) records half the tree displacement, whereas the
F{\o}lner-ball index \(N\) is already a tree depth.
The auxiliary projective lattice \(\GammaP\) is used only to describe the
tree quotient.  It need not be the projective image of \(\Gamma\); the latter
is a subgroup of \(\GammaP\).  The underlying graph of
\(\GammaP\backslash\Tree\) is the Nagao ray, represented by
\[
v_0-v_1-v_2-\cdots.
\]
The quotient is an edge-indexed graph: the single drawn edge toward the cusp
and the single drawn edge toward the compact part encode different
multiplicities upstairs.  Proposition~\ref{prop:local-height} records those
multiplicities, which are essential for every shadow count below.
We define
\[
h:V(\Tree)\longrightarrow\ZZ_{\ge0}
\]
by requiring \(h(v)=n\) when the image of \(v\) in \(\GammaP\backslash\Tree\) is
represented by \(v_n\); see
\cite{Nagao1959,SerreTrees,Tits1979,Kwon2020}.

\subsection{Local height structure}
\begin{proposition}[Local height structure]\label{prop:local-height}
Let \(v\in V(\Tree)\).  If \(h(v)=0\), then every neighbor of \(v\) has height
\(1\).  If \(h(v)=k\ge1\), then exactly one neighbor of \(v\) has height \(k+1\)
and exactly \(q\) neighbors have height \(k-1\).
\end{proposition}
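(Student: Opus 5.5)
The plan is to compute stabilizers and use the orbit-counting principle for the action of a vertex stabilizer on its neighbors. The Nagao ray is a strict fundamental domain for \(\GammaP\) acting on \(\Tree\): every vertex is \(\GammaP\)-equivalent to exactly one \(v_n\), with \(\Gamma_n:=\operatorname{Stab}_{\GammaP}(v_n)\). Since \(h\) is \(\GammaP\)-invariant and \(\GammaP\) acts by graph automorphisms, it suffices to treat \(v=v_k\). For each neighbor \(w\) of \(v_k\), the height \(h(w)\) is the index of the ray vertex in the orbit of \(w\). Because the quotient is a ray, these orbits consist of the neighbors that are \(\Gamma_k\)-equivalent to \(v_{k+1}\) or to \(v_{k-1}\). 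So the proof reduces to computing the two orbit sizes \([\Gamma_k:\Gamma_k\cap\Gamma_{k\pm1}]\).

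First I identify the stabilizers explicitly, following Serre's description. The stabilizer \(\Gamma_0=\mathrm{PGL}_2(\Fq)\). For \(k\ge1\), \(\Gamma_k\) consists of the classes of
\[
\begin{pmatrix}a&b\\0&d\end{pmatrix},\qquad a,d\in\Fq^\times,\ b\in\Fq[t],\ \deg b\le k.
\]
This follows because \(\bar a_k^{-1}\gamma\bar a_k\in\bK\) forces the lower-left entry to have degree at most \(-k<0\), hence to vanish, while the upper-right entry must have degree at most \(k\). The intersection \(\Gamma_k\cap\Gamma_{k+1}=\Gamma_k\), since the condition \(\deg b\le k\) implies \(\deg b\le k+1\). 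The intersection \(\Gamma_k\cap\Gamma_{k-1}\) is the subgroup with \(\deg b\le k-1\) when \(k\ge2\), and is the Borel subgroup \(B(\Fq)\) when \(k=1\).

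Next I compute the indices. For \(k\ge1\), the orbit of \(v_{k+1}\) under \(\Gamma_k\) has size \([\Gamma_k:\Gamma_k]=1\), so exactly one neighbor has height \(k+1\). The orbit of \(v_{k-1}\) has size \(q\): the top coefficient of \(b\) ranges over \(\Fq\) when \(k\ge2\), and \([\Gamma_1:B(\Fq)]\) equals \(q^{k+1}/q^{k}=q\) after accounting for the diagonal part when \(k=1\). This gives \(1+q\) neighbors in total, which is all of them, so these two orbits exhaust the neighborhood. For \(k=0\), \(\Gamma_0\cap\Gamma_1\) is the Borel subgroup of \(\mathrm{PGL}_2(\Fq)\), of index \(q+1\). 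Hence \(\Gamma_0\) acts transitively on the \(q+1\) neighbors (which correspond to \(\mathbb P^1(\Fq)\)), and all of them have height \(1\).

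The main obstacle is justifying that the ray is a strict fundamental domain with exactly these stabilizers, which is Nagao's theorem. I will cite it from \cite{Nagao1959,SerreTrees} rather than reprove it. Given that input, the only remaining care is the \(k=1\) index computation and the check that the orbit sizes sum to \(q+1\), which guarantees that no neighbor has been missed.
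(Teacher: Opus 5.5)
Your proof is correct and follows the paper's route: reduce to \(v=v_k\), compute the stabilizer \(\Gamma_k\) explicitly (upper triangular, \(\deg b\le k\), or \(\mathrm{PGL}_2(\Fq)\) at \(k=0\)), and determine its orbits on the \(q+1\) neighbors. The only difference is how the orbits are found. The paper reads them off the image Borel subgroup of \(\mathrm{PGL}_2(\Fq)\) acting on \(\mathbb P^1(\Fq)\). You instead get the sizes \(1\) and \(q\) by orbit--stabilizer from the chain \(\Gamma_{k-1}\subset\Gamma_k\subset\Gamma_{k+1}\) (resp.\ \(B(\Fq)\subset\Gamma_1\)), and close with the count \(1+q=q+1\). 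That count also makes your preliminary claim that ``the quotient is a ray'' unnecessary, and it spares you from checking which point of \(\mathbb P^1(\Fq)\) corresponds to \(v_{k+1}\).
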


\begin{proof}
For \(n\ge1\), the stabilizer of \(v_n\) in \(\GammaP\) is
\[
\Gamma_{\mathrm{PGL},n}=\GammaP\cap \bar a_n\bK\bar a_n^{-1}.
\]
It consists of projective classes represented by matrices
\[
\begin{pmatrix}a&b\\0&d\end{pmatrix},
\qquad a,d\in\Fq^\times,
\qquad b\in\Fq[t],\quad \deg b\le n.
\]
The action on the neighbors of \(v_n\) factors through the upper triangular
subgroup of \(\mathrm{PGL}_2(\Fq)\).  This subgroup has two orbits on
\(\mathbb P^1(\Fq)\): the fixed point \(\infty\), corresponding to the unique
neighbor \(v_{n+1}\), and the affine line \(\Fq\), corresponding to the \(q\)
remaining neighbors, all projecting to \(v_{n-1}\).  At \(n=0\), the stabilizer is
\(\mathrm{PGL}_2(\Fq)\), which is transitive on \(\mathbb P^1(\Fq)\); hence all
neighbors project to height \(1\).
\end{proof}

Figure~\ref{fig:local-height} displays the underlying ray and the multiplicities
encoded by its two edge directions.

\begin{figure}[ht]
\centering
\begin{tikzpicture}[x=1cm,y=1cm,>=Stealth]
  \draw[panel] (-.62,-.70) rectangle (6.92,1.10);
  \draw[panel] (.78,-4.34) rectangle (5.22,-.70);

  \foreach \i in {0,...,5}{
    \node[openvertex] (r\i) at (1.1*\i,0) {};
    \node[lab] at (1.1*\i,-.42) {\(\i\)};
  }
  \foreach \i/\j in {0/1,1/2,2/3,3/4,4/5}{\draw[axis] (r\i)--(r\j);}
  \draw[->,axis] (5.6,.25)--(6.3,.25) node[right,lab]{cusp};
  \node[figuretitle] at (2.75,.70)
    {quotient Nagao ray \(0-1-2-\cdots\)};

  \node[vertex,label={[lab]left:height \(k\)}] (v) at (3,-2.25) {};
  \node[vertex,label={[lab]right:unique height \(k+1\)}] (up) at (3,-1.05) {};
  \draw[->,axis] (v)--(up);
  \node[lab,right] at (3.16,-1.62) {toward cusp};
  \foreach \i/\x in {1/-1.2,2/-.4,3/.4,4/1.2}{
    \node[terminal] (down\i) at (3+\x,-3.55) {};
    \draw[subtree] (v)--(down\i);
  }
  \draw[decorate,decoration={brace,mirror,amplitude=4pt}]
    (1.60,-3.80)--(4.40,-3.80) node[midway,below=5pt,lab]
    {\(q\) neighbors of height \(k-1\)};
\end{tikzpicture}
\caption{Local height structure on the Nagao ray.  Above height zero there is one continuation toward the cusp and \(q\) continuations back toward the compact part.}
\label{fig:local-height}
\end{figure}

\subsection{The even sector for
\texorpdfstring{\(\mathrm{SL}_2\)}{SL2}}
The group \(\mathrm{SL}_2(F)\) preserves the bipartition of \(\Tree\).  After
fixing \(v_0\), the image of \(G/K\) is the even vertex class.  Nagao reduction
in determinant-one form gives the following precise double-coset statement.

\begin{proposition}[The determinant-one Nagao ray]
\label{prop:sl-nagao}
One has the disjoint decomposition
\[
G=\bigsqcup_{m\ge0}\Gamma a_mK.
\]
Under the action on \(\Tree\), the coset \(\Gamma a_mK\) has quotient height
\(2m\).  Consequently,
\[
\Gamma\backslash G/K\simeq 2\ZZ_{\ge0}.
\]
\end{proposition}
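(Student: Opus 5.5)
We deduce the statement from the $\mathrm{PGL}_2$ Nagao ray together with a comparison between $G=\mathrm{SL}_2(F)$ and $\bG$. There are three things to prove: every $g\in G$ lies in some $\Gamma a_mK$, the double cosets for distinct $m$ are disjoint, and $\Gamma a_mK$ has quotient height $2m$. The map $G/K\to V(\Tree)$, $gK\mapsto \bar g v_0$, is injective. Indeed, if $\bar g v_0=v_0$ with $g\in G$, then $g$ stabilizes $\cO^2$ up to a scalar $c$. Taking determinants gives $|c|^2=1$, so $g\in \mathrm{GL}_2(\cO)\cap G=K$. Since every element of $G$ has determinant one, it moves $v_0$ an even distance, and therefore $G/K$ lands in the even vertex class. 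Conversely, every even vertex is of the form $\bar g v_0$ with $g\in G$. To see this, a vertex is $[\operatorname{diag}(t^n,1)]$-translated by $\bK$. Write $hv_0$ with $h\in\mathrm{GL}_2(F)$; the vertex is even exactly when $v_\infty(\det h)$ is even. Rescaling $h$ by a scalar makes $\det h$ a unit, and composing with $\operatorname{diag}(\det h^{-1},1)\in\mathrm{GL}_2(\cO)$ puts $h$ in $G$. Hence
\[
\Gamma\backslash G/K\simeq \Gamma\backslash V_{\mathrm{even}}(\Tree).
\]
It therefore suffices to show that the $\Gamma$-orbits on even vertices are exactly the sets $\{v: h(v)=2m\}$, and that $[a_m]v_0=v_{2m}$. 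The second fact is the dictionary $[a_m]=\bar a_{2m}$.

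\emph{Heights.} Since $\GammaP$ preserves $h$, so does $\Gamma$. Adjacent vertices have heights differing by $1$ (Proposition~\ref{prop:local-height}) and $h(v_0)=0$, so $h(v)\equiv d_\Tree(v,v_0)\pmod 2$. Thus every even vertex has even height, and $\Gamma a_mK$ maps to height $2m$. Distinct $m$ give distinct heights, which proves disjointness.

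\emph{Surjectivity.} Each even vertex $v$ of height $2m$ satisfies $v=\gamma v_{2m}$ for some $\gamma\in\GammaP$. We must replace $\gamma$ by an element of $\Gamma$. Every $\gamma\in\GammaP$ has a representative in $\mathrm{GL}_2(\Fq[t])$ with determinant $c\in\Fq^\times$. Write
\[
\gamma=\gamma'\,\delta,\qquad \gamma'\in\mathrm{SL}_2(\Fq[t]),\quad \delta=\operatorname{diag}(c,1).
\]
The key observation is that $\operatorname{diag}(c,1)$ fixes $v_{2m}$, since it commutes with $\bar a_{2m}$ and lies in $\mathrm{GL}_2(\cO)$. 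Hence $v=\gamma' v_{2m}$ with $\gamma'\in\Gamma$. Lifting back, if $v=\bar g v_0$ then $\bar g v_0=\bar\gamma'[a_m]v_0$. By the injectivity established in the first step (applied to $a_m^{-1}\gamma'^{-1}g\in G$), we get $g\in\Gamma a_mK$.

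The main obstacle is exactly this passage between $\GammaP$ and $\Gamma$, that is, checking that $\Gamma$-orbits are no smaller than $\GammaP$-orbits on even vertices. The diagonal-determinant trick above handles it. The alternative would be a direct index computation of $\Gamma\backslash\GammaP\cong\Fq^\times/(\Fq^\times)^2$, which we avoid, since diagonal elements already fix every $v_n$. The remaining steps are routine lattice bookkeeping with the scalar $|c|$.
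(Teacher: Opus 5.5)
Your proposal is correct and follows essentially the same route as the paper. Both arguments start from the projective Nagao decomposition and use the fact that \(\operatorname{diag}(c,1)\) commutes with \(a_m\) (equivalently, fixes \(v_{2m}\)) to replace a \(\mathrm{GL}_2(\Fq[t])\) representative by an element of \(\Gamma\). The difference is packaging: you absorb the projective scalar through injectivity of \(gK\mapsto\bar g v_0\) and obtain disjointness from the parity and well-definedness of \(h\), whereas the paper solves \(\zeta^2\det k_1=1\) explicitly and cites projective uniqueness.
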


\begin{proof}
Apply the projective Nagao decomposition
\(\bG=\bigsqcup_{r\ge0}\GammaP\bar a_r\bK\), as established in
\cite{Nagao1959,SerreTrees,Kwon2020}.  The image of \(g\in G\) preserves vertex
type, so its projective double coset has even height \(r=2m\).  Choose
representatives \(\gamma\in\mathrm{GL}_2(A)\) and
\(k\in\mathrm{GL}_2(\cO)\).  Since
\(\operatorname{diag}(t^{2m},1)=t^ma_m\) projectively, there is
\(\zeta\in F^\times\) such that
\[
g=\zeta\gamma a_mk.
\]
Put \(\Delta=\det\gamma\in\Fq^\times\),
\[
\gamma'=\gamma\operatorname{diag}(\Delta^{-1},1)\in\Gamma,
\qquad
k_1=\operatorname{diag}(\Delta,1)k\in\mathrm{GL}_2(\cO).
\]
The diagonal unit commutes with \(a_m\), so \(g=\zeta\gamma'a_mk_1\).
Taking determinants gives \(\zeta^2\det k_1=1\); hence
\(\zeta\in\cO^\times\) and
\(k'=\zeta k_1\in K\).  Thus \(g=\gamma'a_mk'\), proving existence in the
determinant-one double quotient.  Finally, the projective class of \(a_m\) is
\([\operatorname{diag}(t^{2m},1)]\), of quotient height \(2m\).  Projective
uniqueness therefore proves disjointness.
\end{proof}

The spherical height map is
\[
h_K:X\longrightarrow2\ZZ_{\ge0},
\qquad
h_K(\Gamma g)=h(\bar g v_0),
\]
where \(\bar g\) is the image of \(g\) in \(\bG\).  This is well defined:
left multiplication by \(\Gamma\) becomes multiplication by a subgroup of
\(\GammaP\), and right multiplication by \(K\) fixes \(v_0\).  Thus \(h_K\)
is constant on right \(K\)-cosets and induces the identification
\(X/K\simeq2\ZZ_{\ge0}\) of Proposition~\ref{prop:sl-nagao}.

\begin{definition}[Spherical observables]
A function \(f:X\to\CC\) is \(K\)-spherical if
\[
f(\Gamma gk)=f(\Gamma g)\qquad(g\in G,\ k\in K).
\]
Equivalently, there is a height profile \(\Phi:2\ZZ_{\ge0}\to\CC\) such that
\[
f(\Gamma g)=\Phi(h_K(\Gamma g)).
\]
\end{definition}

\begin{definition}[\(K\)-spherical convergence]
A sequence of probability measures \((\lambda_n)\) on \(X\) converges
\emph{\(K\)-spherically} to \(\mu_X\) if
\[
\bigl\|(h_K)_*\lambda_n-(h_K)_*\mu_X\bigr\|_{\TV}\longrightarrow0.
\]
This definition deliberately concerns only the right \(K\)-invariant factor of
the measures.
\end{definition}

\subsection{Spherical Haar pushforward}
The normalized \(G\)-invariant probability measure \(\mu_X\) introduced above
has the \(K\)-spherical pushforward
\[
\rho^{\ev}=(h_K)_*\mu_X
\]
on \(2\ZZ_{\ge0}\).

\begin{proposition}[Explicit spherical Haar law]\label{prop:haar-rho}
The measure \(\rho^{\ev}\) is given by \eqref{eq:rhoev}:
\[
\rho^{\ev}(0)=\frac{q-1}{q},\qquad
\rho^{\ev}(2m)=(q^2-1)q^{-2m-1}\qquad(m\ge1).
\]
\end{proposition}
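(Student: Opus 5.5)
The plan is to compute \(\rho^{\ev}(2m)=\mu_X(h_K^{-1}(2m))\) from the double-coset decomposition of Proposition~\ref{prop:sl-nagao}. The Haar measure of \(X\) pushes forward to a measure on \(\Gamma\backslash G/K\simeq2\ZZ_{\ge0}\). With \(K\) given Haar measure \(1\), the mass of the shell \(\Gamma a_mK\) is proportional to \(1/|\Gamma\cap a_mKa_m^{-1}|\). This is the standard unimodular-lattice formula: \(\Gamma\backslash\Gamma a_mK\simeq(\Gamma\cap a_mKa_{-m})\backslash a_mK\), and \(a_mK\) has the Haar volume of \(K\). So we get
\[
\rho^{\ev}(2m)=\frac{|\Gamma_m|^{-1}}{\sum_{j\ge0}|\Gamma_j|^{-1}},
\qquad \Gamma_m=\Gamma\cap a_mKa_{-m}.
\]
I will justify this by disintegrating the Haar measure over the discrete quotient and using finiteness of each stabilizer.

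Next I will compute \(|\Gamma_m|\). For \(m=0\), \(\Gamma\cap K=\mathrm{SL}_2(\Fq)\), of order \(q(q^2-1)\). For \(m\ge1\), conjugation gives \(a_mKa_{-m}=\{\begin{pmatrix}a&t^{2m}b\\t^{-2m}c&d\end{pmatrix}\}\) with \(a,b,c,d\in\cO\). Intersecting with \(\mathrm{SL}_2(\Fq[t])\) forces the lower-left entry to vanish, since it is a polynomial of absolute value \(\le q^{-2m}<1\). The diagonal entries are constants in \(\Fq^\times\) with product \(1\), and the upper-right entry is a polynomial of degree \(\le 2m\). Hence \(|\Gamma_m|=(q-1)q^{2m+1}\). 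This matches the description in Proposition~\ref{prop:local-height}, but in determinant-one form: the diagonal contributes \(q-1\) choices rather than \((q-1)^2\) modulo scalars.

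Then I sum. We have \(\sum_j|\Gamma_j|^{-1}=\frac1{q(q^2-1)}+\frac1{q-1}\sum_{m\ge1}q^{-2m-1}\). The second term equals \(\frac{1}{(q-1)q(q^2-1)}\). Adding the two gives
\[
\frac{q-1+1}{q(q-1)(q^2-1)}=\frac{1}{(q-1)(q^2-1)}.
\]
Dividing, \(\rho^{\ev}(0)=\frac{(q-1)(q^2-1)}{q(q^2-1)}=\frac{q-1}{q}\), and \(\rho^{\ev}(2m)=\frac{(q-1)(q^2-1)}{(q-1)q^{2m+1}}=(q^2-1)q^{-2m-1}\), as claimed. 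As a sanity check, the total is \(\frac{q-1}q+(q^2-1)\frac{q^{-3}}{1-q^{-2}}=1\).

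The main obstacle is the volume formula in the first step, which requires care with normalizations. I would verify it by noting that the right \(K\)-action on \(\Gamma\backslash\Gamma a_mK\) is transitive with stabilizer of \(\Gamma a_m\) equal to \(a_{-m}\Gamma_ma_m\), a finite subgroup of \(K\). The invariant probability on that orbit therefore has total \(\mu_X\)-mass proportional to \(\mathrm{vol}(K)/|\Gamma_m|\). The only other delicate point is the exact count of \(\Gamma_m\), where the determinant-one constraint and the degree bound \(\deg b\le 2m\) must be checked carefully.
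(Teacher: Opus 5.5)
Your proposal is correct and is essentially the paper's proof. You unfold each shell \(\Gamma a_mK\) to get mass proportional to \(1/|\Gamma\cap a_mKa_m^{-1}|\), count \(|\Gamma_0|=q(q^2-1)\) and \(|\Gamma_m|=(q-1)q^{2m+1}\) exactly as the paper does, and normalize; the only difference is that you divide by the full sum \(\sum_j|\Gamma_j|^{-1}\) directly, whereas the paper first forms the ratio \((q+1)q^{-2m}\) to the bottom shell, which is equivalent.
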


\begin{proof}
By Proposition~\ref{prop:sl-nagao}, the shell at height \(2m\) is represented
by \(\Gamma a_mK\).  If Haar measure on \(G\) is normalized by
\(\operatorname{vol}(K)=1\), unfolding the compact-open double coset gives
\[
\operatorname{vol}(\Gamma\backslash\Gamma a_mK)
   =\frac{1}{|\Gamma\cap a_mKa_m^{-1}|}.
\]
Thus the shell mass is proportional to the inverse order of
\[
\Gamma_m:=\Gamma\cap a_mKa_m^{-1}.
\]
At \(m=0\), one has \(\Gamma_0=\mathrm{SL}_2(\Fq)\), of order
\(q(q^2-1)\).  If \(m\ge1\), direct intersection gives
\[
\Gamma_m=
\left\{
\begin{pmatrix}\alpha&b\\0&\alpha^{-1}\end{pmatrix}:
\alpha\in\Fq^\times,\ b\in A,\ \deg b\le2m
\right\},
\]
and hence \(|\Gamma_m|=(q-1)q^{2m+1}\).  Therefore the ratio of the shell mass
at height \(2m\) to the bottom-shell mass is
\[
\frac{|\Gamma_0|}{|\Gamma_m|}
=\frac{q(q^2-1)}{(q-1)q^{2m+1}}
=(q+1)q^{-2m}.
\]
Hence \(\rho^{\ev}(2m)=\rho^{\ev}(0)(q+1)q^{-2m}\).  Normalization gives
\[
1=\rho^{\ev}(0)\left(1+(q+1)\sum_{m\ge1}q^{-2m}\right)
=\rho^{\ev}(0)\frac{q}{q-1},
\]
so \(\rho^{\ev}(0)=(q-1)/q\).  The displayed formula for \(m\ge1\) follows.
\end{proof}

\begin{remark}
Proposition~\ref{prop:haar-rho} is the point at which the tree calculation is
connected to the homogeneous measure.  Once an averaging measure has height
pushforward close to \(\rho^{\ev}\), it is equidistributed against all
\(K\)-spherical observables.
\end{remark}

\subsection{Shadow realization of spherical averages}
Let \(\lambda\) be a probability measure on \(X\).  For a \(K\)-spherical
observable \(f=\Phi\circ h_K\),
\[
\int_X f\,d\lambda=
\sum_{j\in2\ZZ_{\ge0}}\Phi(j)\,(h_K)_*\lambda(j).
\]
Thus all \(K\)-spherical information in \(\lambda\) is contained in the height
pushforward \((h_K)_*\lambda\).  The central task in the two dynamical
applications below is to show that this pushforward is one of the rooted
descendant distributions described in the next section.

\section{Rooted tree combinatorics}\label{sec:rooted-tree}

All shadows in this section live upstairs in the \((q+1)\)-regular tree, not
in the underlying unweighted quotient ray.  Distinct terminal vertices may
project to the same height and are counted with their full multiplicity.  The
resulting normalized count can be viewed as a finite-time height law, but no
independent random-walk assumption is being introduced: the probabilities are
uniform counting measure on one deterministic descendant layer.

\subsection{Oriented states and descendant shadows}
Let \(e=(z_{-1},z_0)\) be an oriented edge in \(\Tree\), and write \(h(z_0)=k\).
The edge is of forward type if \(h(z_{-1})=k-1\), and of backward type if
\(h(z_{-1})=k+1\).  We write these states as \((\fwd,k)\) and \((\bwd,k)\),
respectively; see Figure~\ref{fig:states}.

\begin{convention}[The bottom backward state]\label{conv:bottom-backward}
The state \((\bwd,0)\) means an oriented edge \(z_{-1}\to z_0\) with
\(h(z_{-1})=1\) and \(h(z_0)=0\).  Since height zero has \(q+1\) neighbors, the
non-backtracking continuations from \((\bwd,0)\) are the \(q\) neighbors of
height \(1\) other than \(z_{-1}\).  Each such continuation enters the forward
state \((\fwd,1)\).  This convention is needed in the first-turn decomposition
when the first turn lands at height zero.
\end{convention}

\begin{figure}[ht]
\centering
\begin{tikzpicture}[x=1cm,y=1cm,>=Stealth]
  \draw[panel] (-2.34,-1.02) rectangle (2.34,2.64);
  \draw[panel] (3.56,-1.02) rectangle (8.24,2.64);

  \node[vertex] (fa) at (0,1.35) {};
  \node[vertex] (fb) at (0,0) {};
  \draw[->,axis] (fa)--(fb);
  \node[lab,left] at (-.20,1.35) {\(z_{-1}\), \(h=k-1\)};
  \node[lab,left] at (-.20,0) {\(z_0\), \(h=k\)};
  \node[figuretitle,align=center] at (0,2.20) {forward state\\\((\fwd,k)\)};
  \node[callout] at (0,-.66) {height increases\\toward the cusp};

  \node[vertex] (ba) at (5.90,1.35) {};
  \node[vertex] (bb) at (5.90,0) {};
  \draw[->,axis] (ba)--(bb);
  \node[lab,right] at (6.10,1.35) {\(z_{-1}\), \(h=k+1\)};
  \node[lab,right] at (6.10,0) {\(z_0\), \(h=k\)};
  \node[figuretitle,align=center] at (5.90,2.20) {backward state\\\((\bwd,k)\)};
  \node[callout] at (5.90,-.66) {height decreases\\toward the compact part};
\end{tikzpicture}
\caption{The two oriented rooted states.  In both pictures the arrow is the incoming oriented edge \(z_{-1}\to z_0\).  The forward state moves from height \(k-1\) to height \(k\), while the backward state moves from height \(k+1\) to height \(k\).}
\label{fig:states}
\end{figure}

\begin{definition}[Descendant shadow]
For \(N\ge0\), the depth-\(N\) descendant shadow \(\Desc_N(e)\) is the set of
terminal vertices \(z_N\) of non-backtracking paths
\[
z_{-1}\to z_0\to z_1\to\cdots\to z_N.
\]
At each step there are exactly \(q\) allowed continuations.  Distinct
continuation sequences have distinct terminal vertices because \(\Tree\) is a
tree.  Hence
\[
|\Desc_N(e)|=q^N.
\]
Only the terminal generation is counted, as emphasized in
Figure~\ref{fig:terminal-shadow}.
\end{definition}

\begin{figure}[ht]
\centering
\begin{tikzpicture}[x=1cm,y=0.92cm,>=Stealth]
  \begin{scope}[on background layer]
    \draw[panel] (-5.52,-5.48) rectangle (6.58,1.25);
  \end{scope}
  \node[lab,anchor=east] at (-3.45,0.72) {parent of the root};
  \node[lab,anchor=east] at (-3.45,0.00) {$0$-th ancestor (root)};
  \node[lab,anchor=east] at (-3.45,-1.10) {1st descendants};
  \node[lab,anchor=east] at (-3.45,-2.05) {2nd descendants};
  \node[lab,anchor=east] at (-3.45,-2.95) {3rd descendants};
  \node[lab,anchor=east] at (-3.45,-4.28) {$N$-th descendants};

  \node[openvertex] (p) at (0,0.72) {};
  \node[openvertex] (r) at (0,0) {};
  \draw[axis] (p)--(r);
  \node[lab,above] at (p) {$z_{-1}$};
  \node[lab,above right] at (r) {$z_0$};

  \foreach \x/\name in {-2.15/a,-1.05/b,0/c,1.05/d,2.15/e}{
    \node[openvertex] (one\name) at (\x,-1.10) {};
    \draw[subtree] (r)--(one\name);
  }
  \node[smalllab,right] at (2.30,-1.10) {$q$ children};

  \foreach \x/\parent/\tag in {-2.65/a/la,-2.4/a/lb,-2.15/a/lc,-1.9/a/ld,-1.65/a/le,1.65/e/ra,1.9/e/rb,2.15/e/rc,2.4/e/rd,2.65/e/re}{
    \node[openvertex] (two\tag) at (\x,-2.05) {};
    \draw[subtree] (one\parent)--(two\tag);
  }
  \node[smallgraylab] at (0,-2.05) {$\cdots$};

  \foreach \x/\px/\tag in {-3.05/-2.65/a,-2.85/-2.65/b,-2.65/-2.65/c,-2.45/-2.65/d,-2.25/-2.65/e,2.25/2.65/f,2.45/2.65/g,2.65/2.65/h,2.85/2.65/i,3.05/2.65/j}{
    \node[openvertex] (three\tag) at (\x,-2.95) {};
    \draw[subtree] (\px,-2.05)--(three\tag);
  }
  \node[smallgraylab] at (-1.15,-2.95) {$\cdots$};
  \node[smallgraylab] at (1.15,-2.95) {$\cdots$};

  \node[smallgraylab] at (0,-3.55) {$\vdots$};

  \foreach \x in {-2.95,-2.65,-2.35,-2.05,-1.20,-.90,-.60,.60,.90,1.20,2.05,2.35,2.65,2.95}{
    \node[terminal] at (\x,-4.28) {};
  }
  \node[smallgraylab] at (-1.62,-4.28) {$\cdots$};
  \node[smallgraylab] at (0,-4.28) {$\cdots$};
  \node[smallgraylab] at (1.62,-4.28) {$\cdots$};

  \begin{scope}[on background layer]
    \draw[terminalband] (-3.24,-4.62) rectangle (3.24,-3.96);
  \end{scope}
  \node[lab,anchor=west,align=left] at (3.38,-4.28)
    {terminal layer\\$\Desc_N(e)$};
  \draw[decorate,decoration={brace,mirror,amplitude=4pt}]
    (-3.16,-4.84)--(3.16,-4.84)
    node[midway,below=5pt,smalllab]
    {$q^N$ terminal descendants; earlier generations are not averaged};
\end{tikzpicture}
\caption{A schematic rooted $q$-ary descendant tree.  The first few generations are drawn explicitly, intermediate generations are suppressed by dots, and the averaging set is the final horizontal layer of $N$-th descendants.}
\label{fig:terminal-shadow}
\end{figure}

\begin{definition}[Height counts and normalized laws]
Let \(\mathsf C^{\fwd}_{k,N}(j)\) be the number of vertices of height \(j\) in a
depth-\(N\) descendant shadow starting from a forward state \((\fwd,k)\),
where \(k\ge1\), and
let \(\mathsf C^{\bwd}_{k,N}(j)\) be the corresponding count for a backward state
\((\bwd,k)\), where \(k\ge0\).  Define normalized distributions by
\[
\mu^{\fwd}_{k,N}(j)=q^{-N}\mathsf C^{\fwd}_{k,N}(j),
\qquad
\mu^{\bwd}_{k,N}(j)=q^{-N}\mathsf C^{\bwd}_{k,N}(j).
\]
By Proposition~\ref{prop:local-height}, these counts depend only on the
oriented state and the root height, not on the chosen oriented edge.
The symbol \(\delta_j\) denotes the Dirac mass at height \(j\), whereas
\(\mathbf 1_E\) denotes the indicator of a condition \(E\).
\end{definition}

\begin{definition}[Admissible parity]
A pair \((k,N)\) has admissible even parity if \(k+N\) is even.  In that case all
terminal heights of the depth-\(N\) shadow from height \(k\) lie in
\(2\ZZ_{\ge0}\).  Equivalently, \(M=N-k\) is even.
\end{definition}

\subsection{The basic recursion}
\begin{lemma}[Forward/backward recursion]\label{lem:recursion}
For \(k\ge1\), \(N\ge1\), and \(j\ge0\),
\[
\mathsf C^{\bwd}_{k,N}(j)=q\,\mathsf C^{\bwd}_{k-1,N-1}(j),
\]
and
\[
\mathsf C^{\fwd}_{k,N}(j)=\mathsf C^{\fwd}_{k+1,N-1}(j)+(q-1)\mathsf C^{\bwd}_{k-1,N-1}(j).
\]
At the bottom, Convention~\ref{conv:bottom-backward} gives
\[
\mathsf C^{\bwd}_{0,N}(j)=q\,\mathsf C^{\fwd}_{1,N-1}(j)\qquad(N\ge1).
\]
\end{lemma}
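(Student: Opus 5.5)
The plan is to decompose the terminal layer $\Desc_N(e)$ according to the first step $z_0\to z_1$, then identify each piece as a descendant shadow of depth $N-1$ rooted at the oriented edge $(z_0,z_1)$.

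The underlying identity is simple. Non-backtracking paths $z_{-1}\to z_0\to\cdots\to z_N$ correspond bijectively to pairs consisting of a choice of $z_1\ne z_{-1}$ adjacent to $z_0$ and a non-backtracking path $z_0\to z_1\to\cdots\to z_N$. Hence
\[
\Desc_N(z_{-1},z_0)=\bigsqcup_{z_1}\Desc_{N-1}(z_0,z_1).
\]
The union is disjoint because $\Tree$ is a tree. Counting vertices of height $j$ in each piece, and using that the counts depend only on the oriented state (the remark after the definition of height counts, resting on Proposition~\ref{prop:local-height}), it then suffices to determine the state of each edge $(z_0,z_1)$.

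The classification uses Proposition~\ref{prop:local-height} at $z_0$, with $h(z_0)=k$.
\begin{enumerate}
\item \textbf{Backward state $(\bwd,k)$, $k\ge1$.} Here $z_{-1}$ is the unique neighbor of height $k+1$. The remaining $q$ neighbors all have height $k-1$, and each edge $(z_0,z_1)$ is of backward type with root height $k-1$. This gives $\mathsf C^{\bwd}_{k,N}(j)=q\,\mathsf C^{\bwd}_{k-1,N-1}(j)$.
\item \textbf{Forward state $(\fwd,k)$, $k\ge1$.} Here $z_{-1}$ is one of the $q$ neighbors of height $k-1$. The other $q$ continuations are the unique upward neighbor of height $k+1$, which gives state $(\fwd,k+1)$, and the $q-1$ remaining downward neighbors, each giving $(\bwd,k-1)$. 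This yields the second formula.
\item \textbf{Bottom state $(\bwd,0)$.} By Convention~\ref{conv:bottom-backward}, all $q$ continuations go to height $1$ and enter $(\fwd,1)$. This gives $\mathsf C^{\bwd}_{0,N}(j)=q\,\mathsf C^{\fwd}_{1,N-1}(j)$.
\end{enumerate}

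The one point needing care is the state-dependence of the counts, since the recursion itself is just bookkeeping. That dependence follows by induction on $N$ from the same decomposition. For $N=0$ the count is $\delta_k(j)$. For the inductive step, the multiset of states of the children depends only on the current state, by Proposition~\ref{prop:local-height}. Base cases with $k-1=0$ are handled consistently because $(\bwd,0)$ is defined through the bottom convention.
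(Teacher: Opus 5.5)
Your proposal is correct and follows the paper's approach: decompose the terminal layer by the first step out of the root, and classify each continuation's state using Proposition~\ref{prop:local-height}, with the bottom case handled by Convention~\ref{conv:bottom-backward}. Your explicit disjoint-union identity and your induction showing that the counts depend only on the oriented state add detail to what the paper simply asserts after defining the height counts.
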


\begin{proof}
From a backward state at height \(k\ge1\), all \(q\) non-backtracking
continuations move to height \(k-1\), and each remains backward type.  This gives
the first recursion.  From a forward state at height \(k\), one continuation moves
to the unique neighbor of height \(k+1\), and the remaining \(q-1\) continuations
move to height \(k-1\), where the state becomes backward type.  This gives the
second recursion.  The bottom formula follows because a backward state at height
zero has exactly \(q\) non-backtracking continuations, all entering the forward
state at height one.
\end{proof}

\begin{lemma}[Backward motion before the bottom]\label{lem:back-before-bottom}
If \(0\le N\le k\), then
\[
\mathsf C^{\bwd}_{k,N}(j)=q^N\mathbf 1_{\{j=k-N\}}.
\]
If \(N\ge k+1\), then
\[
\mathsf C^{\bwd}_{k,N}(j)=q^{k+1}\mathsf C^{\fwd}_{1,N-k-1}(j).
\]
\end{lemma}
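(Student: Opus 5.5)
The plan is to iterate the first recursion of Lemma~\ref{lem:recursion}, using induction on \(N\) for the first claim and then one application of the bottom formula for the second.

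\emph{First claim (\(0\le N\le k\)).} We induct on \(N\). For \(N=0\) the shadow is just the root \(z_0\), of height \(k\), so \(\mathsf C^{\bwd}_{k,0}(j)=\mathbf 1_{\{j=k\}}\). Now suppose \(1\le N\le k\). Then \(k\ge1\), so the backward recursion applies and gives
\[
\mathsf C^{\bwd}_{k,N}(j)=q\,\mathsf C^{\bwd}_{k-1,N-1}(j).
\]
Since \(N-1\le k-1\), the inductive hypothesis applies to \((k-1,N-1)\) and yields \(q^{N-1}\mathbf 1_{\{j=k-N\}}\). This gives the stated formula.

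Geometrically, every backward step from height at least \(1\) goes down one level with \(q\) choices and stays backward, by Proposition~\ref{prop:local-height}. Each non-backtracking path therefore descends monotonically until it reaches height \(0\).

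\emph{Second claim (\(N\ge k+1\)).} Apply the first claim's reduction \(k\) times. Each reduction needs the current height to be at least \(1\), and this holds at the successive heights \(k,k-1,\dots,1\). The result is
\[
\mathsf C^{\bwd}_{k,N}(j)=q^{k}\,\mathsf C^{\bwd}_{0,N-k}(j).
\]
Since \(N-k\ge1\), the bottom formula of Lemma~\ref{lem:recursion}, which rests on Convention~\ref{conv:bottom-backward}, gives
\[
\mathsf C^{\bwd}_{0,N-k}(j)=q\,\mathsf C^{\fwd}_{1,N-k-1}(j).
\]
Combining the two displays produces \(q^{k+1}\mathsf C^{\fwd}_{1,N-k-1}(j)\).

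For \(k=0\) the iteration step is empty, and the claim is exactly the bottom formula.

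\emph{Where care is needed.} The only delicate point is bookkeeping: the \(k\)-fold iteration must stop exactly at height \(0\) with depth \(N-k\ge1\) remaining, so that the bottom convention applies rather than the first recursion. This is precisely what the hypothesis \(N\ge k+1\) guarantees. No parity assumption is needed.
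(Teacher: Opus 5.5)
Your proof is correct and is essentially the paper's argument. The paper reads the descent directly off the local height rule: height drops by one with $q$ branches for $k$ steps, then the bottom convention sends all $q$ continuations into $(\fwd,1)$. You formalize the same step count by induction using the recursions of Lemma~\ref{lem:recursion}.
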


\begin{proof}
Before reaching height zero, a backward state decreases the height by one at each
step and branches by the factor \(q\).  After \(k\) steps all descendants are at
height zero; the next step has \(q\) non-backtracking choices, all to height one,
and the remaining process is the distinguished forward process at height one.
\end{proof}

\subsection{The distinguished forward process}
\begin{proposition}[Explicit shell formula at height one]\label{prop:forward-one-shell}
For \(N\ge0\),
\[
\mathsf C^{\fwd}_{1,N}(j)=
\begin{cases}
1, & j=N+1,\\
(q^2-1)q^{N-j-1}, & 1\le j\le N-1,\ j\equiv N+1\pmod2,\\
(q-1)q^{N-1}, & j=0,\ N\text{ odd},\\
0, & \text{otherwise}.
\end{cases}
\]
\end{proposition}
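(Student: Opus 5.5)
Induction on \(N\) is the natural route, using the recursions of Lemma~\ref{lem:recursion} together with Lemma~\ref{lem:back-before-bottom} to close the system on the single unknown \(\mathsf C^{\fwd}_{1,\cdot}\).

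\emph{Reduction to one recursion.} Apply the forward recursion at \(k=1\):
\[
\mathsf C^{\fwd}_{1,N}(j)=\mathsf C^{\fwd}_{2,N-1}(j)+(q-1)\mathsf C^{\bwd}_{0,N-1}(j).
\]
The backward term is handled by the bottom formula, which gives \(\mathsf C^{\bwd}_{0,N-1}=q\,\mathsf C^{\fwd}_{1,N-2}\) for \(N\ge2\) and \(\delta_0\) for \(N=1\). The forward term from height \(2\) is not yet of the right form, so I would strengthen the statement and prove a formula for \(\mathsf C^{\fwd}_{k,N}\) for all \(k\ge1\) simultaneously. A cleaner alternative is the \emph{first-turn decomposition}: a forward path from \((\fwd,1)\) either goes straight up for all \(N\) steps, landing once at height \(N+1\), or turns down for the first time at step \(r+1\) (\(0\le r\le N-1\)). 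In the latter case it climbs to height \(1+r\), and then \(q-1\) choices enter \((\bwd,r)\) with \(N-r-1\) steps remaining. Hence
\[
\mathsf C^{\fwd}_{1,N}=\delta_{N+1}+(q-1)\sum_{r=0}^{N-1}\mathsf C^{\bwd}_{r,N-r-1}.
\]

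\emph{Evaluation of the backward terms.} Lemma~\ref{lem:back-before-bottom} evaluates each \(\mathsf C^{\bwd}_{r,N-r-1}\). When \(N-r-1\le r\), the term is \(q^{N-r-1}\delta_{2r+1-N}\). When \(N-r-1\ge r+1\), it is \(q^{r+1}\mathsf C^{\fwd}_{1,N-2r-2}\), a strictly smaller index, so strong induction on \(N\) applies. In the first range, \(j=2r+1-N\) runs over \(j\equiv N+1\pmod 2\) with \(0\le j\le N-1\), each with weight \((q-1)q^{(N-j-1)/2}\). The second range contributes
\[
(q-1)\sum_{r} q^{r+1}\,\mathsf C^{\fwd}_{1,N-2r-2}(j),
\]
where by induction each inner term is a piecewise geometric expression.

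\emph{Main obstacle.} Summing these contributions shell by shell must reproduce exactly \((q^2-1)q^{N-j-1}\) for \(1\le j\le N-1\) and \((q-1)q^{N-1}\) at \(j=0\). This is the bookkeeping of a double geometric sum, including the top shell \(j=N'+1\) of each smaller layer. To keep it manageable, I would first check \(N=0,1,2,3\) by hand and confirm the total mass \(q^N\) as a consistency check. Then, for fixed \(j\ge1\), I would collect the contributions
\[
(q-1)q^{(N-j-1)/2}+(q-1)\sum_r q^{r+1}\,\mathsf C^{\fwd}_{1,N-2r-2}(j)
\]
and telescope using the inductive formula. The bottom shell \(j=0\) should be treated separately, since only odd \(N\) reach it.

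If this bookkeeping becomes unwieldy, the fallback is to verify directly that the proposed formula satisfies the two-step recursion obtained by composing Lemma~\ref{lem:recursion} with itself, and then invoke uniqueness of its solution.
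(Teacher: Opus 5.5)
Your proposal follows the paper's proof essentially verbatim: the first-turn decomposition from \((\fwd,1)\), Lemma~\ref{lem:back-before-bottom} to turn each backward term into either a point mass or \(q^{r+1}\mathsf C^{\fwd}_{1,N-2r-2}\), strong induction on \(N\), and a separate treatment of the bottom shell. The bookkeeping you defer closes directly, so the fallback is unnecessary; as stated it would not close on height one alone anyway, since composing Lemma~\ref{lem:recursion} keeps producing \(\mathsf C^{\fwd}_{2,\cdot},\mathsf C^{\fwd}_{3,\cdot},\ldots\). Concretely, for \(1\le j\le N-1\) with \(R=(N-j-1)/2\), the inner terms with \(r<R\) are interior shells, the term \(r=R\) is the top shell of \(\mathsf C^{\fwd}_{1,j-1}\) (value \(1\)), and the terms \(r>R\) vanish; adding the single point-mass term at \(r=(N+j-1)/2\) gives \((q-1)\bigl((q^2-1)\sum_{s=R}^{2R-1}q^s+q^{R+1}+q^R\bigr)=(q^2-1)q^{2R}\), while for \(j=0\), \(N=2R+1\), one gets \((q-1)^2\sum_{s=R}^{2R-1}q^s+(q-1)q^R=(q-1)q^{2R}\).
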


\begin{proof}
Write \(C_N(j)=\mathsf C^{\fwd}_{1,N}(j)\).  We prove the formula by strong
induction on \(N\).  The case \(N=0\) is \(C_0(j)=\mathbf 1_{\{j=1\}}\).
For \(N\ge1\), separate the always-forward path and, for every other path, let
\(r+1\) be the time of its first turn.  Immediately after that turn the state
is backward at height \(r\), with \(N-r-1\) steps remaining.  Hence
\[
C_N(j)=\mathbf 1_{\{j=N+1\}}+(q-1)\sum_{r=0}^{N-1}
       \mathsf C^{\bwd}_{r,N-r-1}(j).
\]
Applying Lemma~\ref{lem:back-before-bottom} to each summand gives the renewal
identity
\begin{align}\label{eq:C-renewal}
C_N(j)
&=\mathbf 1_{\{j=N+1\}}
 +(q-1)\sum_{0\le r\le (N-2)/2}q^{r+1}C_{N-2r-2}(j) \notag\\
&\quad +(q-1)
 \sum_{\lceil(N-1)/2\rceil\le r\le N-1}
 q^{N-r-1}\mathbf 1_{\{j=2r-N+1\}},
\end{align}
where a sum over an empty index set is zero.  The two sums correspond,
respectively, to a backward segment that reaches height zero and one that ends
before reaching height zero.

Every term in \eqref{eq:C-renewal} has parity \(j\equiv N+1\pmod2\), and only
the first term can reach \(j=N+1\).  Now let
\(1\le j\le N-1\) have the admissible parity and put
\[
R=\frac{N-j-1}{2}.
\]
In the first sum of \eqref{eq:C-renewal}, the terms with \(r>R\) vanish; the
term \(r=R\) is the top term of \(C_{j-1}(j)\) and equals one.  For \(r<R\),
the induction hypothesis gives the interior coefficient.  The second sum has
exactly one nonzero term, at \(r=(N+j-1)/2\).  Therefore
\begin{align*}
C_N(j)
 &=(q-1)\left((q^2-1)\sum_{r=0}^{R-1}q^{2R-r-1}
                    +q^{R+1}+q^R\right)\\
 &=(q-1)(q+1)q^{2R}
  =(q^2-1)q^{N-j-1}.
\end{align*}
Here we used
\((q-1)\sum_{s=R}^{2R-1}q^s=q^{2R}-q^R\); this also covers \(R=0\).

Finally, the bottom shell can occur only when \(N=2R+1\) is odd.  Using the
induction hypothesis in the first sum and the unique term \(r=R\) in the
second gives
\[
C_N(0)
 =(q-1)^2\sum_{s=R}^{2R-1}q^s+(q-1)q^R
 =(q-1)q^{2R}=(q-1)q^{N-1}.
\]
All remaining heights have zero count by parity or by the displayed support
conditions.  This completes the induction.
\end{proof}

\begin{proposition}[First-turn decomposition]\label{prop:first-turn}
For every \(k\ge1\) and \(N\ge0\),
\[
\mathsf C^{\fwd}_{k,N}(j)=\mathbf 1_{\{j=k+N\}}+(q-1)\sum_{r=0}^{N-1}
\mathsf C^{\bwd}_{k+r-1,N-r-1}(j).
\]
Equivalently,
\begin{equation}\label{eq:first-turn-normalized}
\mu^{\fwd}_{k,N}=q^{-N}\delta_{k+N}
 +(q-1)\sum_{r=0}^{N-1}q^{-r-1}\mu^{\bwd}_{k+r-1,N-r-1}.
\end{equation}
\end{proposition}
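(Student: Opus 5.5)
I would prove the count identity by classifying the non-backtracking paths \(z_{-1}\to z_0\to\cdots\to z_N\) from a forward state \((\fwd,k)\) according to the time of their first turn. This is the same argument already used for \(k=1\) in the proof of Proposition~\ref{prop:forward-one-shell}. The main input is Proposition~\ref{prop:local-height}. Since \(k\ge1\) and the state is forward, every vertex reached while the path keeps moving up has height at least \(1\). Each such vertex therefore has exactly one neighbor of larger height and \(q\) neighbors of smaller height. One of the smaller-height neighbors is the vertex we came from. Hence, at a forward state of height \(k+r\), the \(q\) allowed continuations are the single upward one, which gives the state \((\fwd,k+r+1)\), and \(q-1\) downward ones. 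Each downward continuation gives the state \((\bwd,k+r-1)\), since the new root has a neighbor (the previous vertex) of height one more. When \(k+r-1=0\) this backward state at height zero is interpreted through Convention~\ref{conv:bottom-backward}; that case can only occur for \(r=0\), \(k=1\).

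Next, I partition the paths. There is exactly one path that never turns, and it ends at height \(k+N\). This accounts for the term \(\mathbf 1_{\{j=k+N\}}\). Every other path has a first turn at some step \(r+1\), where \(0\le r\le N-1\). It rises for \(r\) steps, reaching \((\fwd,k+r)\) along a uniquely determined segment, and then takes one of the \(q-1\) downward edges. After that it is a non-backtracking path of length \(N-r-1\) issuing from a backward state at height \(k+r-1\). The terminal vertices are distinct, because \(\Tree\) is a tree. By the state-dependence remark following the definition of the counts, each of the \(q-1\) branches contributes \(\mathsf C^{\bwd}_{k+r-1,N-r-1}(j)\). Summing over \(r\) gives the first identity. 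Formally, one could instead iterate the second recursion of Lemma~\ref{lem:recursion} \(N\) times, terminating with \(\mathsf C^{\fwd}_{k+N,0}(j)=\mathbf 1_{\{j=k+N\}}\).

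For the normalized form \eqref{eq:first-turn-normalized}, I divide by \(q^N\). The counts on the right satisfy \(\mathsf C^{\bwd}_{k+r-1,N-r-1}=q^{N-r-1}\mu^{\bwd}_{k+r-1,N-r-1}\), so the factor \(q^{N-r-1}/q^N=q^{-r-1}\) appears, and the top term becomes \(q^{-N}\delta_{k+N}\).

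There is no real obstacle. The only point needing care is the boundary case \(k=1\), \(r=0\), where the turn lands at height \(0\). There one must check that the downward neighbors are still \(q-1\) in number, since the previous vertex has height \(0\). One must also check that the resulting backward state at height zero has exactly the continuations prescribed by Convention~\ref{conv:bottom-backward}. Both follow from Proposition~\ref{prop:local-height}, because a height-zero vertex has all \(q+1\) neighbors at height one.
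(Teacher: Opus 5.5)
Your proposal is correct and follows the paper's proof. You classify paths by the time \(r+1\) of their first downward turn, isolate the unique always-forward path ending at height \(k+N\), and match each of the \(q-1\) turning branches with a backward shadow of depth \(N-r-1\) at height \(k+r-1\) (using the bottom convention exactly when \(k=1\), \(r=0\)), then normalize by \(q^N\); your alternative of iterating the forward recursion of Lemma~\ref{lem:recursion} \(N\) times is also a valid formal check.
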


\begin{proof}
There is one path that never leaves the forward geodesic; it ends at height
\(k+N\).  Every other path has a first backward turn at time \(r+1\).  Up to time
\(r\), it follows the unique forward geodesic and reaches height \(k+r\).  Then
it chooses one of \(q-1\) backward branches and the remaining \(N-r-1\) steps are
governed by the backward process at height \(k+r-1\).  If \(k=1\) and \(r=0\),
this uses the bottom state \((\bwd,0)\) of Convention~\ref{conv:bottom-backward}.
Normalization by \(q^N\) gives \eqref{eq:first-turn-normalized}.
\end{proof}

The two parts of \eqref{eq:first-turn-normalized} are displayed in
Figure~\ref{fig:first-turn}.

\begin{figure}[ht]
\centering
\begin{tikzpicture}[x=1.08cm,y=.94cm,>=Stealth]
  \begin{scope}[on background layer]
    \draw[panel] (-3.52,-.52) rectangle (5.30,6.58);
  \end{scope}
  \foreach \yy/\labtext in {
    0/{$k$},
    .88/{$k+1$},
    1.76/{$k+2$},
    3.50/{$k+r-1$},
    4.38/{$k+r$},
    6.12/{$k+N$}
  }{
    \draw[guide] (-.65,\yy)--(4.60,\yy);
    \node[smalllab,left] at (-.22,\yy) {\labtext};
  }

  \node[vertex] (vk)  at (0,0) {};
  \node[vertex] (vk1) at (0,.88) {};
  \node[vertex] (vk2) at (0,1.76) {};
  \node[vertex] (vkr) at (0,4.38) {};
  \node[vertex] (vkN) at (0,6.12) {};

  \draw[accent] (vk)--(vk1)--(vk2);
  \draw[accent] (vk2)--(0,2.18);
  \node[smallgraylab] at (0,2.72) {$\vdots$};
  \draw[accent] (0,3.05)--(vkr);
  \draw[accent] (vkr)--(0,4.84);
  \node[smallgraylab] at (0,5.32) {$\vdots$};
  \draw[accent] (0,5.60)--(vkN);

  \node[callout] at (-2.15,5.43)
    {always-forward path\\exceptional atom $q^{-N}\delta_{k+N}$};
  \draw[incoming] (-.82,5.50)--(vkN);

  \coordinate (turn) at (1.35,3.50);
  \draw[incoming] (vkr)--(turn);
  \node[vertex] at (turn) {};

  \node[callout,anchor=south west] at (.46,3.98)
    {first turn at time $r+1$};
  \node[smalllab,right] at (1.50,3.66) {$(q-1)$ choices};

  \foreach \x/\name in {.68/a,1.02/b,1.36/c,1.70/d,2.04/e}{
    \node[openvertex,inner sep=1.15pt] (\name) at (\x,2.70) {};
    \draw[subtree] (turn)--(\name);
  }

  \foreach \x/\parent in {.68/a,1.02/b,1.36/c,1.70/d,2.04/e}{
    \foreach \dx in {-.15,-.075,0,.075,.15}{
      \node[openvertex,inner sep=.95pt] at (\x+\dx,1.85) {};
      \draw[subtree] (\parent)--(\x+\dx,1.85);
    }
  }

  \node[smallgraylab] at (1.36,1.47) {$\vdots$};

  \foreach \x in {.46,.62,.78,.94,1.10,1.26,1.42,1.58,1.74,1.90,2.06,2.22}{
    \node[terminal] at (\x,.72) {};
  }
  \draw[horoline] (.34,.72)--(2.34,.72);
  \begin{scope}[on background layer]
    \draw[terminalband] (.28,.44) rectangle (2.40,.98);
  \end{scope}

  \node[callout] at (2.42,.05)
    {backward shadow $\mu^{\bwd}_{k+r-1,N-r-1}$};

  \draw[decorate,decoration={brace,amplitude=4pt}]
    (2.75,3.50)--(2.75,.72)
    node[midway,right=7pt,smalllab]
    {$N-r-1$ steps};

  \node[callout] at (4.08,1.08)
    {weight\\$(q-1)q^{-r-1}$};

  \node[callout,align=left] at (3.28,5.43)
    {$\mu^{\fwd}_{k,N}$\\
     $=q^{-N}\delta_{k+N}+\,$first-turn mixture};
\end{tikzpicture}
\caption{Vertical first-turn decomposition.  The forward spine is followed through
heights \(k,k+1,k+2,\ldots,k+r,\ldots,k+N\).  At height \(k+r\) the first
backward turn moves to height \(k+r-1\), and the remaining \(N-r-1\) steps form
the backward shadow \(\mu^{\bwd}_{k+r-1,N-r-1}\).}
\label{fig:first-turn}
\end{figure}

\subsection{The even limiting law and exact discrepancy}
We now restrict to admissible even parity.  For a height profile
\(\Phi\in L^1(\rho^{\ev})\), put
\[
\ang\Phi\rang_{\rho^{\ev}}=\sum_{m\ge0}\rho^{\ev}(2m)\Phi(2m).
\]

\begin{proposition}[Backward exact discrepancy]\label{prop:back-discrepancy}
Let \(k\ge0\) and \(N\ge k\), let the initial state be backward type at height
\(k\), and assume that \((k,N)\) has admissible parity.  Put
\(M=N-k\), and let
\(\Phi\in L^1(\rho^{\ev})\).  Then
\[
\int \Phi\,d\mu^{\bwd}_{k,N}-\ang\Phi\rang_{\rho^{\ev}}
=q^{-M-1}\Phi(M)-\sum_{2m\ge M+2}\rho^{\ev}(2m)\Phi(2m).
\]
Consequently,
\[
\left|\int \Phi\,d\mu^{\bwd}_{k,N}-\ang\Phi\rang_{\rho^{\ev}}\right|
\le \sum_{2m\ge M}\rho^{\ev}(2m)|\Phi(2m)|.
\]
\end{proposition}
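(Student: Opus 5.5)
By Lemma~\ref{lem:back-before-bottom} the backward shadow reduces to the distinguished forward process at height one, whose shell counts are explicit by Proposition~\ref{prop:forward-one-shell}. The plan is to compute the normalized law \(\mu^{\bwd}_{k,N}\) in closed form and subtract \(\rho^{\ev}\) shell by shell.

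\emph{Case \(M=0\).} Here \(N=k\), and Lemma~\ref{lem:back-before-bottom} gives \(\mu^{\bwd}_{k,k}=\delta_0\). The claimed right-hand side is
\[
q^{-1}\Phi(0)-\sum_{m\ge1}\rho^{\ev}(2m)\Phi(2m).
\]
Since \(1-\rho^{\ev}(0)=q^{-1}\), this is exactly \(\Phi(0)-\ang\Phi\rang_{\rho^{\ev}}\).

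\emph{Case \(M\ge2\).} Here \(N\ge k+1\), and Lemma~\ref{lem:back-before-bottom} gives
\[
\mu^{\bwd}_{k,N}=q^{-N}q^{k+1}\mathsf C^{\fwd}_{1,M-1},
\qquad\text{that is,}\qquad
\mu^{\bwd}_{k,N}(j)=q^{-M+1}\mathsf C^{\fwd}_{1,M-1}(j).
\]
Put \(N'=M-1\), which is odd since \(M\) is even. Proposition~\ref{prop:forward-one-shell} then gives the following shells.
\begin{itemize}
\item At \(j=0\): \(q^{-N'}(q-1)q^{N'-1}=(q-1)/q=\rho^{\ev}(0)\).
\item At even \(j=2m\) with \(1\le 2m\le N'-1=M-2\): \(q^{-N'}(q^2-1)q^{N'-2m-1}=\rho^{\ev}(2m)\).
\item At the top shell \(j=N'+1=M\): mass \(q^{-M+1}\).
\end{itemize}
So \(\mu^{\bwd}_{k,N}\) agrees with \(\rho^{\ev}\) on heights \(0,\dots,M-2\). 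The difference is therefore
\[
\bigl(q^{-M+1}-\rho^{\ev}(M)\bigr)\Phi(M)-\sum_{2m\ge M+2}\rho^{\ev}(2m)\Phi(2m).
\]
For \(M\ge2\), \(\rho^{\ev}(M)=(q^2-1)q^{-M-1}\), so
\[
q^{-M+1}-\rho^{\ev}(M)=q^{-M-1}\bigl(q^2-(q^2-1)\bigr)=q^{-M-1}.
\]
This yields the identity. All sums converge absolutely because \(\Phi\in L^1(\rho^{\ev})\).

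\emph{The inequality.} Apply the triangle inequality to the identity. It suffices to check that \(q^{-M-1}\le\rho^{\ev}(M)\) in both cases:
\begin{itemize}
\item for \(M\ge2\): \(q^{-M-1}\le (q^2-1)q^{-M-1}\), since \(q\ge2\);
\item for \(M=0\): \(q^{-1}\le (q-1)/q\), again since \(q\ge2\).
\end{itemize}
Then \(q^{-M-1}|\Phi(M)|\) is absorbed into the \(2m=M\) term of \(\sum_{2m\ge M}\rho^{\ev}(2m)|\Phi(2m)|\).

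The main obstacle is bookkeeping rather than anything conceptual. One must match the parity of \(N'=M-1\) with the index ranges in Proposition~\ref{prop:forward-one-shell}, and treat the bottom shell and the \(M=0\) case separately, since the \(\rho^{\ev}\) formula is different at height zero.
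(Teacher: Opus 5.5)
Your proof is correct and follows the same route as the paper. You treat $M=0$ via $\mu^{\bwd}_{k,k}=\delta_0$ and $\rho^{\ev}(0)=(q-1)/q$; for $M\ge2$ you reduce through Lemma~\ref{lem:back-before-bottom} to $q^{-(M-1)}\mathsf C^{\fwd}_{1,M-1}$, check shell-by-shell agreement with $\rho^{\ev}$ below $M$ and the top-shell excess $q^{-M-1}$, and close with $q^{-M-1}\le\rho^{\ev}(M)$, writing out the normalization and index ranges that the paper leaves implicit.
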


\begin{proof}
If \(M=0\), then the backward distribution is \(\delta_0\), and the displayed
formula says
\[
\Phi(0)-\ang\Phi\rang_{\rho^{\ev}}
= q^{-1}\Phi(0)-\sum_{2m\ge2}\rho^{\ev}(2m)\Phi(2m),
\]
which follows from \(\rho^{\ev}(0)=(q-1)/q\).  If \(M\ge2\),
Lemma~\ref{lem:back-before-bottom} reduces the normalized backward process to a
shifted copy of the distinguished process of Proposition~\ref{prop:forward-one-shell}.  Its
weights agree exactly with \(\rho^{\ev}\) on the bottom shell and on all interior
shells below the top height \(M\).  At the top shell the difference is
\[
q^{-(M-1)}-(q^2-1)q^{-M-1}=q^{-M-1},
\]
and all shells above \(M\) are missing.  This proves the identity.  The bound
follows from \(q^{-M-1}\le \rho^{\ev}(M)\), with the case \(M=0\) included.
\end{proof}

\begin{proposition}[Forward exact discrepancy]\label{prop:forward-discrepancy}
Let \(k\ge1\) and \(N\ge0\), let the initial state be forward type at height
\(k\), assume admissible parity, and let \(\Phi\in L^1(\rho^{\ev})\).  Then
\begin{align*}
\int \Phi\,d\mu^{\fwd}_{k,N}-\ang\Phi\rang_{\rho^{\ev}}
&=q^{-N}\bigl(\Phi(k+N)-\ang\Phi\rang_{\rho^{\ev}}\bigr)  \\
&\quad +(q-1)\sum_{r=0}^{N-1}q^{-r-1}
\left(\int \Phi\,d\mu^{\bwd}_{k+r-1,N-r-1}-\ang\Phi\rang_{\rho^{\ev}}\right).
\end{align*}
\end{proposition}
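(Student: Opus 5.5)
The identity follows by integrating the normalized first-turn decomposition \eqref{eq:first-turn-normalized} of Proposition~\ref{prop:first-turn} against \(\Phi\) and subtracting \(\ang\Phi\rang_{\rho^{\ev}}\) times the total mass. Since \(\mu^{\fwd}_{k,N}\) is a probability measure with \(q^N\) terminal vertices, the weights in \eqref{eq:first-turn-normalized} sum to one:
\[
q^{-N}+(q-1)\sum_{r=0}^{N-1}q^{-r-1}=q^{-N}+(1-q^{-N})=1 .
\]
Each backward law \(\mu^{\bwd}_{k+r-1,N-r-1}\) is itself a probability measure, since its shadow has exactly \(q^{N-r-1}\) terminal vertices. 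We may therefore write
\[
\ang\Phi\rang_{\rho^{\ev}}
=q^{-N}\ang\Phi\rang_{\rho^{\ev}}
+(q-1)\sum_{r=0}^{N-1}q^{-r-1}\ang\Phi\rang_{\rho^{\ev}}.
\]
Subtracting this from the integrated decomposition term by term gives exactly the displayed formula.

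Before subtracting, I will check that every integral is finite and only even heights occur. All measures involved are finitely supported: the depth-\(N\) shadow has finitely many terminal vertices. By admissible parity, their supports lie in \(2\ZZ_{\ge0}\). Indeed, the atom sits at \(k+N\), which is even. For a backward component, the root height \(k+r-1\) and depth \(N-r-1\) satisfy \((k+r-1)+(N-r-1)=k+N-2\), which is even, so each backward component also has admissible parity and its terminal heights are even. Hence \(\int\Phi\,d\mu\) is a finite sum of values \(\Phi(2m)\) and is well defined for any \(\Phi\in L^1(\rho^{\ev})\). The quantity \(\ang\Phi\rang_{\rho^{\ev}}\) is finite by assumption. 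Linearity of integration over the finite mixture then justifies the term-by-term subtraction.

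The only care point is the boundary index \(k=1\), \(r=0\). There the backward component is \(\mu^{\bwd}_{0,N-1}\), which is interpreted via Convention~\ref{conv:bottom-backward}. Proposition~\ref{prop:first-turn} already includes this case, so nothing further is needed. When \(N=0\), the sum is empty and the identity reads \(\Phi(k)-\ang\Phi\rang=\Phi(k)-\ang\Phi\rang\), which holds trivially. I expect no real obstacle; the main thing to get right is the total-mass computation, which is what makes the constant \(\ang\Phi\rang_{\rho^{\ev}}\) distribute exactly over the components.
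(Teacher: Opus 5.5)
Your proposal is correct and matches the paper's own argument: integrate the normalized first-turn decomposition \eqref{eq:first-turn-normalized} against \(\Phi\), then subtract \(\ang\Phi\rang_{\rho^{\ev}}\) using \((q-1)\sum_{r=0}^{N-1}q^{-r-1}=1-q^{-N}\). Your extra checks on even parity of every component, finiteness of the integrals, and the boundary case \(k=1\), \(r=0\) are accurate details that the paper leaves implicit.
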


\begin{proof}
This is obtained by integrating \(\Phi\) against the normalized first-turn
decomposition \eqref{eq:first-turn-normalized} and then subtracting
\(\ang\Phi\rang_{\rho^{\ev}}\), using
\((q-1)\sum_{r=0}^{N-1}q^{-r-1}=1-q^{-N}\).
\end{proof}

\begin{proposition}[Exact agreement on finite height windows]\label{prop:finite-window-exact}
Let \(\varepsilon\in\{\fwd,\bwd\}\), let \(k,N\ge0\), let \((k,N)\) have
admissible parity, and put
\(M=N-k\ge0\).  Assume \(k\ge1\) if \(\varepsilon=\fwd\).  Let
\(R\in2\ZZ_{\ge0}\).  If \(M>R\), then
\[
\mu^{\varepsilon}_{k,N}(j)=\rho^{\ev}(j)\qquad (j\in2\ZZ_{\ge0},\ 0\le j\le R).
\]
Consequently, if \(\Phi:2\ZZ_{\ge0}\to\CC\) is supported in \(\{0,2,\ldots,R\}\), then
\[
\int \Phi\,d\mu^{\varepsilon}_{k,N}=\ang\Phi\rang_{\rho^{\ev}}.
\]
\end{proposition}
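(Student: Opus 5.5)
Both orientations can be handled by combining the exact discrepancy formulas with the observation that every error term lives in heights \(\ge M\). The second assertion then follows from the first by summing \(\Phi(j)\mu^{\varepsilon}_{k,N}(j)=\Phi(j)\rho^{\ev}(j)\) over the finitely many \(j\le R\). So the work is the pointwise identity \(\mu^{\varepsilon}_{k,N}(j)=\rho^{\ev}(j)\) for even \(j\le R<M\). I test against the indicator \(\Phi=\mathbf 1_{\{j\}}\), which lies in \(L^1(\rho^{\ev})\).

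\emph{Backward case.} Apply Proposition~\ref{prop:back-discrepancy} with \(\Phi=\mathbf 1_{\{j\}}\). The right-hand side is \(q^{-M-1}\mathbf 1_{\{j=M\}}-\rho^{\ev}(j)\mathbf 1_{\{j\ge M+2\}}\). Both terms vanish because \(j\le R<M\). Hence \(\mu^{\bwd}_{k,N}(j)=\rho^{\ev}(j)\).

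\emph{Forward case.} Apply Proposition~\ref{prop:forward-discrepancy} with the same indicator.

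The atom term is \(q^{-N}(\mathbf 1_{\{j=k+N\}}-\rho^{\ev}(j))\). Its indicator part vanishes since \(k+N\ge N\ge M>j\). The remaining piece \(-q^{-N}\rho^{\ev}(j)\) does not vanish on its own.

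Each summand in the sum involves the backward state \((\bwd,k+r-1)\) with depth \(N-r-1\). Its cutoff is \(M_r=(N-r-1)-(k+r-1)=M-2r\). This is even, so the parity is admissible.

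Split the summands according to whether \(M_r\ge0\).

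\emph{Terms with \(M_r\ge0\).} The backward identity gives discrepancy \(q^{-M_r-1}\mathbf 1_{\{j=M_r\}}-\rho^{\ev}(j)\mathbf 1_{\{j\ge M_r+2\}}\). This no longer vanishes automatically, because \(M_r\) can drop below \(j\). A direct cancellation check would be delicate.

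\emph{Terms with \(M_r<0\).} Here the backward shadow never reaches the bottom. By Lemma~\ref{lem:back-before-bottom} it is the point mass at height \(k+r-1-(N-r-1)=k+2r-N\). Its discrepancy is \(\mathbf 1_{\{j=k+2r-N\}}-\rho^{\ev}(j)\).

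Rather than verify cancellation term by term, I would organise the proof as a direct support argument.

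\emph{Paths never reaching height \(0\).} A path from the root whose height stays positive has at each step exactly one upward continuation and at most \(q\) downward continuations. Its terminal height is at least \(k-N=-M\), which says nothing, so a cleaner observation is needed.

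Decompose by the last visit to height \(0\), using Convention~\ref{conv:bottom-backward}. A terminal vertex either lies on a path that visited height zero, or on a path that never did.

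For a path that never visited height zero, the height changes by \(\pm1\) at each of the \(N\) steps and stays at least \(1\). The step constraints give: at most one upward choice per step, and downward moves in backward states where all \(q\) choices go down. Counting such paths against the terminal height should show they have terminal height \(\ge M+?\)... This is getting murky.

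The cleaner route is induction on \(N\) using Lemma~\ref{lem:recursion}, with induction hypothesis the claim for all \((\varepsilon,k',N')\) with \(N'<N\) and cutoff \(>R\).

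\emph{Backward step.} This is already done directly above.

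\emph{Forward step.} Use \(\mu^{\fwd}_{k,N}=q^{-1}\mu^{\fwd}_{k+1,N-1}+(1-q^{-1})\mu^{\bwd}_{k-1,N-1}\).
The backward piece has cutoff \(M\), which exceeds \(R\), so it agrees with \(\rho^{\ev}\) on \([0,R]\) by the backward case.
The forward piece has cutoff \(M-2\). Provided \(M-2>R\), it agrees with \(\rho^{\ev}\) by induction. The convex combination then agrees with \(\rho^{\ev}\) on \([0,R]\).

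The main obstacle is the boundary case \(M=R+2\), where the forward piece's cutoff equals \(R\) and the induction hypothesis no longer applies. There I would check the statement with \(R\) replaced by \(R-2\) (heights \(j\le R-2\) are covered by induction), and treat the single height \(j=R=M-2\) by hand.

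For that height I would iterate the forward recursion down to Proposition~\ref{prop:forward-one-shell} and Proposition~\ref{prop:back-discrepancy}. The aim is to show that the forward chain from \((\fwd,k)\) deposits at height \(M-2\) exactly the Haar mass \((q^2-1)q^{-M+1}\). I expect this to follow from iterating the first-turn decomposition: every turn occurs at height at least \(k\) and then descends a further \(\ge k\) levels before rebounding, so heights below \(M\) are reached only after a visit to the bottom. If this single-height check fails, I would instead prove the claim under the stronger hypothesis \(M\ge R+4\) in the forward case, which still gives the eventual-equality consequences the paper needs.
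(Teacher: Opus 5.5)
Your backward case, and the reduction of the profile statement to single shells, are correct and match the paper. The forward case, however, is not proved. The induction through \(\mu^{\fwd}_{k,N}=q^{-1}\mu^{\fwd}_{k+1,N-1}+(1-q^{-1})\mu^{\bwd}_{k-1,N-1}\) cannot close. The forward child has cutoff \(M-2\), so the inductive hypothesis reaches only \(j\le M-4\), and the top height \(j=M-2\) is left open at every step. This is not an exceptional boundary case; it is the whole content of the forward statement. The loss of \(2\) recurs for any fixed margin, so the fallback \(M\ge R+4\) fails for the same reason, and it would also weaken the proposition. The heuristic you give for the top height is false as well. A late first turn \(r>M/2\) yields a backward subshadow that never reaches the bottom, namely a point mass at \(2r-M\), which can be any even height in \((0,N+k-2]\). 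For example, with \(k=2\), \(N=6\), \(M=4\), the paths that turn at time \(4\) descend \(5\to4\to3\to2\). They deposit mass \((q-1)q^{-4}\) at height \(2<M\) without visiting height zero. These late-turn masses are exactly what the identity needs.

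The first-turn computation you set aside as delicate is the paper's proof, and it is short. For even \(0<j<M\), put \(r_0=(M-j)/2\). Turns with \(r<r_0\) have cutoff \(M-2r>j\). The turn \(r=r_0\) meets the top shell of its backward subshadow, which has mass \(\rho^{\ev}(j)+q^{-j-1}\). Together these give \((1-q^{-r_0-1})\rho^{\ev}(j)+(q-1)q^{-r_0-j-2}\). The late turn \(r=(M+j)/2\le N-1\) adds the point mass \((q-1)q^{-r_0-j-1}\). All other turns, and the always-forward atom, put nothing at \(j\). The two corrections sum to \(\rho^{\ev}(j)q^{-r_0-1}\), which is exactly the missing mass. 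The case \(j=0\) works the same way with a single correction at \(r=M/2\).

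If you prefer your recursion, it can be repaired by inducting on \(M\) with the full forward law as hypothesis. For a forward state with cutoff \(M\), the law agrees with \(\rho^{\ev}\) at every even \(j\le M\), including \(j=M\). It has mass \((q-1)q^{-M-i-1}\) at \(M+2i\) for \(1\le i\le k-1\), and mass \(q^{-M-k}\) at \(M+2k\). The base case \(M=0\) is a direct path count. The one-step recursion, combined with the backward top-shell mass \(\rho^{\ev}(M)+q^{-M-1}\), then reproduces each clause.
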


\begin{proof}
For the backward state, the assertion follows immediately from the exact formula
of Proposition~\ref{prop:back-discrepancy}: applying the formula to the indicator
of an even shell \(j\le R<M\), neither the top-shell correction at \(M\) nor the
missing tail above \(M\) contributes.

We prove the forward case by the first-turn decomposition.  Fix an even height
\(j<M\).  The always-forward atom is at height \(k+N=M+2k>j\), so it does not
contribute to the shell \(j\).  In the first-turn sum, put
\[
M_r=(N-r-1)-(k+r-1)=M-2r.
\]
For \(r<(M-j)/2\), the backward subshadow has cutoff \(M_r>j\), and the backward
case gives its shell mass \(\rho^{\ev}(j)\).  For \(r=(M-j)/2\), the shell \(j\) is
the top shell of the backward subshadow, whose mass is \(\rho^{\ev}(j)+q^{-j-1}\)
(with the convention that this is \(1\) when \(j=0\)).  For \((M-j)/2<r<(M+j)/2\),
the backward subshadow has already truncated below height \(j\), so the shell
mass is zero.  Finally, when \(r=(M+j)/2\), the late first-turn path which has not
reached the bottom yet contributes a point mass at height \(j\); for \(j=0\) this
last case coincides with the top-shell case.  Summing these mutually exclusive
contributions with the weights \((q-1)q^{-r-1}\) gives \(\rho^{\ev}(j)\).  Indeed,
for \(j>0\), writing \(r_0=(M-j)/2\), the coefficient of \(\rho^{\ev}(j)\) from
\(r<r_0\) and \(r=r_0\) is \(1-q^{-r_0-1}\), while the two correction terms are
\[
(q-1)q^{-r_0-1}q^{-j-1}+(q-1)q^{-(M+j)/2-1}
=\rho^{\ev}(j)q^{-r_0-1}.
\]
For \(j=0\), the same computation has only the single correction at \(r=M/2\),
and it gives \(\rho^{\ev}(0)q^{-M/2-1}\), exactly the mass missing from the
geometric sum of the \(\rho^{\ev}(0)\)-terms.  Thus
\(\mu^{\fwd}_{k,N}(j)=\rho^{\ev}(j)\) for every even \(j<M\).  Since \(R<M\), the
claim follows by linearity for all profiles supported in \(\{0,2,\ldots,R\}\).
\end{proof}

\begin{lemma}[Uniform domination for a fixed root]\label{lem:fixed-domination}
Fix \(\varepsilon\in\{\fwd,\bwd\}\) and a height \(k\ge0\), with \(k\ge1\)
when \(\varepsilon=\fwd\).  There is a constant \(C_{q,k}\) such that
\[
\mu^\varepsilon_{k,N}(j)\le C_{q,k}\rho^{\ev}(j)
\]
for every \(N\ge0\) satisfying \(k+N\equiv0\pmod2\) and every even
\(j\ge0\).
\end{lemma}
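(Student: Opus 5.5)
The plan is to treat the backward case first, using the explicit formula, and then transfer to the forward case via the first-turn decomposition.

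\emph{Backward case.} When \(N<k\), i.e.\ \(M<0\), Lemma~\ref{lem:back-before-bottom} shows that \(\mu^{\bwd}_{k,N}=\delta_{k-N}\) with \(k-N\le k\). There are only finitely many possible heights \(j\le k\), and \(\rho^{\ev}(j)\ge(q^2-1)q^{-k-1}>0\) on them, so the bound holds with \(C=q^{k+1}/(q^2-1)\) (together with the case \(j=0\)). When \(M\ge0\), Proposition~\ref{prop:back-discrepancy} applied to the indicator of a shell gives three cases:
\[
\mu^{\bwd}_{k,N}(j)=\rho^{\ev}(j)\ \text{ for } j<M,\qquad
\mu^{\bwd}_{k,N}(M)=\rho^{\ev}(M)+q^{-M-1}\le2\rho^{\ev}(M),
\]
and \(\mu^{\bwd}_{k,N}(j)=0\) for \(j>M\). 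Hence \(C=2\) works there, and the backward case is uniform even in \(k\).

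\emph{Forward case.} Integrate the indicator of shell \(j\) against \eqref{eq:first-turn-normalized}. Each backward summand \(\mu^{\bwd}_{k+r-1,N-r-1}\) has root \(k+r-1\), which is not fixed, so I avoid the \(k\)-dependent constant above. Instead I use the structural facts directly:
\begin{itemize}
\item If the cutoff \(M_r=M-2r\ge0\), the summand is at most \(2\rho^{\ev}(j)\).
\item If \(M_r<0\), the summand is a point mass at the height \(k+r-1-(N-r-1)=k-N+2r\).
\end{itemize}
The summands of the first type contribute at most \(2(q-1)\sum_r q^{-r-1}\rho^{\ev}(j)\le2\rho^{\ev}(j)\).

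The second-type summands are the main obstacle, since their point masses can sit at large heights \(j\). A point mass at \(j\) requires \(r=(j+N-k)/2\), and that single \(r\) carries weight \((q-1)q^{-r-1}=(q-1)q^{-(j+N-k)/2-1}\). The needed comparison is
\[
q^{-(j+N-k)/2}\lesssim\rho^{\ev}(j)\asymp q^{-j}.
\]
This holds exactly when \(j\le N-k\) up to a \(q^{k}\)-type loss. The other constraint, \(M_r<0\), i.e.\ \(r>M/2\), gives \(j=k-N+2r>k-N+M=0\), but it does not bound \(j\) from above. I would therefore use that \(r\le N-1\), so \(j\le k+N-2\), and check the inequality directly. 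The ratio is
\[
q^{-(j+N-k)/2}/q^{-j}=q^{(j-N+k)/2}\le q^{k-1},
\]
since \(j\le N+k-2\). So these terms are bounded by \(C_q q^{k}\rho^{\ev}(j)\).

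Finally, the atom \(q^{-N}\delta_{k+N}\) satisfies \(q^{-N}\le q^{k}\cdot q^{-(k+N)}\lesssim q^{k+1}\rho^{\ev}(k+N)\). Collecting the three contributions gives \(C_{q,k}\asymp q^{k+1}\).
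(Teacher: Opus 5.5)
Your proposal is correct and follows the paper's proof essentially step for step: you get the backward bound from the shell-indicator form of Proposition~\ref{prop:back-discrepancy} and treat the finitely many cases \(N<k\) separately. In the forward case you use the first-turn decomposition, in which early turns are dominated uniformly (constant \(2\), independent of the moving root), late-turn point masses have ratio at most \(q^{k-1}\) by \(r\le N-1\), and the always-forward atom has ratio \(q^{k+1}/(q^2-1)\). Your explicit remark that each even \(j>0\) receives at most one late-turn point mass, namely from \(r=(j+N-k)/2\), is a detail the paper leaves implicit.
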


\begin{proof}
For a backward shadow that has reached the bottom, the explicit shell formula
in Proposition~\ref{prop:back-discrepancy}, applied to shell indicators, shows
that its mass is \(\rho^{\ev}(j)\) below the cutoff, is at most a fixed
multiple of \(\rho^{\ev}(j)\) at the top shell, and is zero above it.  The
finitely many cases \(N<k\) are absorbed into a constant depending on \(k\).

For a forward shadow, use \eqref{eq:first-turn-normalized}.  The backward terms
whose cutoff is nonnegative are uniformly dominated as above.  If a late turn
has negative cutoff, Lemma~\ref{lem:back-before-bottom} makes that term a point
mass at
\[
j=2r-(N-k).
\]
Its coefficient is \((q-1)q^{-r-1}\).  Since
\(r\le N-1=(N-k)+k-1\), one has, for \(j>0\),
\[
\frac{(q-1)q^{-r-1}}{\rho^{\ev}(j)}
 =\frac{q^{j-r}}{q+1}
 =\frac{q^{r-(N-k)}}{q+1}
 \le\frac{q^{k-1}}{q+1}.
\]
Finally, the always-forward atom has mass \(q^{-N}\) at \(j=k+N\), and
\[
\frac{q^{-N}}{\rho^{\ev}(k+N)}
 =\frac{q^{k+1}}{q^2-1}.
\]
This proves the claim.
\end{proof}

\begin{theorem}[Rooted equidistribution]\label{thm:rooted-equidistribution}
If \(\Phi\in L^1(\rho^{\ev})\), then for every fixed \(k\ge0\),
\[
\int \Phi\,d\mu^{\bwd}_{k,N}\to \ang\Phi\rang_{\rho^{\ev}}
\]
as \(N\to\infty\) through admissible parity.
For every fixed \(k\ge1\), the same conclusion holds with
\(\mu^{\bwd}_{k,N}\) replaced by \(\mu^{\fwd}_{k,N}\).
\end{theorem}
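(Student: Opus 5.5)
The plan is dominated convergence on the discrete height space $2\ZZ_{\ge0}$, with pointwise convergence supplied by Proposition~\ref{prop:finite-window-exact} and the integrable majorant supplied by Lemma~\ref{lem:fixed-domination}. Fix $\varepsilon\in\{\fwd,\bwd\}$ and an admissible $k$ (with $k\ge1$ in the forward case). Then let $N\to\infty$ through values with $k+N$ even, so that $M=N-k\to\infty$.

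\textbf{Pointwise convergence.} Fix an even height $j$. Once $M>j$, Proposition~\ref{prop:finite-window-exact} applied with $R=j$ gives $\mu^{\varepsilon}_{k,N}(j)=\rho^{\ev}(j)$ exactly. So for every shell the masses are eventually equal to the limit, which is stronger than convergence.

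\textbf{Domination.} Lemma~\ref{lem:fixed-domination} gives a constant $C_{q,k}$, uniform in $N$ of the right parity, such that
\[
\bigl|\Phi(j)\bigr|\,\mu^{\varepsilon}_{k,N}(j)\le C_{q,k}\,\bigl|\Phi(j)\bigr|\,\rho^{\ev}(j).
\]
Since $\Phi\in L^1(\rho^{\ev})$, the right-hand side is summable over $j\in2\ZZ_{\ge0}$.

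\textbf{Conclusion.} Dominated convergence for series gives
\[
\sum_j\Phi(j)\mu^{\varepsilon}_{k,N}(j)\to\sum_j\Phi(j)\rho^{\ev}(j)=\ang\Phi\rang_{\rho^{\ev}}.
\]
To avoid invoking a limit theorem, one can argue directly with a tail split. Given $\eta>0$, choose an even $R$ with
\[
C_{q,k}\sum_{2m>R}\rho^{\ev}(2m)|\Phi(2m)|<\eta .
\]
Once $M>R$, the contributions of the shells $j\le R$ agree exactly by Proposition~\ref{prop:finite-window-exact}. The difference is then bounded by $(C_{q,k}+1)$ times the tail beyond $R$, hence by $2\eta$.

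The only delicate point is the uniformity of the domination constant in $N$. That uniformity is exactly what Lemma~\ref{lem:fixed-domination} provides; it is also why $k$ must be fixed, since the constant grows like $q^{k}$. With the two earlier results in hand, the argument is routine and no step presents a genuine obstacle.
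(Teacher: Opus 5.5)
Your proposal is correct and takes essentially the same route as the paper. The paper also uses dominated convergence on $2\ZZ_{\ge0}$, with shellwise (in fact eventually exact) convergence from Proposition~\ref{prop:finite-window-exact} and the uniform majorant $C_{q,k}\rho^{\ev}$ from Lemma~\ref{lem:fixed-domination}; it additionally notes that the backward case already follows from the tail bound in Proposition~\ref{prop:back-discrepancy}. Your explicit tail split is a valid unpacking of the dominated-convergence step: the final bound by $2\eta$ uses $C_{q,k}\ge1$, which holds automatically because both measures are probability measures.
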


\begin{proof}
The backward convergence is the tail estimate in
Proposition~\ref{prop:back-discrepancy}.  For either state,
Proposition~\ref{prop:finite-window-exact} shows that
\(\mu^\varepsilon_{k,N}(j)\to\rho^{\ev}(j)\) at every fixed even shell.
Lemma~\ref{lem:fixed-domination} gives
\[
|\Phi(j)|\mu^\varepsilon_{k,N}(j)
\le C_{q,k}|\Phi(j)|\rho^{\ev}(j).
\]
The right-hand side is summable, so dominated convergence on the discrete
height space proves the assertion.
\end{proof}

\subsection{Effective estimates}
\begin{proposition}[Backward effective estimate]\label{prop:effective-back}
Let \(k\ge0\) and \(N\ge k\), and assume
\(k+N\equiv0\pmod2\).  Let
\(\Phi:2\ZZ_{\ge0}\to\CC\) satisfy
\(|\Phi(2m)|\le Cq^{\alpha m}\) for every \(m\ge0\), where \(C>0\) and
\(\alpha<2\).  Then
\[
\left|\int \Phi\,d\mu^{\bwd}_{k,N}-\ang\Phi\rang_{\rho^{\ev}}\right|
\ll_{q,\alpha} C q^{-(2-\alpha)(N-k)/2}.
\]
\end{proposition}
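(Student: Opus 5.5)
We start from the tail bound already recorded in Proposition~\ref{prop:back-discrepancy}: with \(M=N-k\ge0\) even,
\[
\left|\int \Phi\,d\mu^{\bwd}_{k,N}-\ang\Phi\rang_{\rho^{\ev}}\right|
\le \sum_{2m\ge M}\rho^{\ev}(2m)|\Phi(2m)|.
\]
No further tree combinatorics is needed. The estimate reduces to bounding this Haar tail under the growth hypothesis \(|\Phi(2m)|\le Cq^{\alpha m}\).

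Insert the explicit law \eqref{eq:rhoev}. When \(M\ge2\), every term in the tail has \(m\ge1\), so
\[
\sum_{2m\ge M}\rho^{\ev}(2m)|\Phi(2m)|
\le C(q^2-1)q^{-1}\sum_{m\ge M/2}q^{-(2-\alpha)m}.
\]
Because \(\alpha<2\), this is a convergent geometric series with ratio \(q^{-(2-\alpha)}<1\). Its sum is
\[
\frac{q^{-(2-\alpha)M/2}}{1-q^{-(2-\alpha)}},
\]
and the prefactor \((q^2-1)q^{-1}/(1-q^{-(2-\alpha)})\) depends only on \(q\) and \(\alpha\). This gives the bound \(Cq^{-(2-\alpha)(N-k)/2}\) up to a constant depending on \(q,\alpha\).

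The case \(M=0\) is handled separately. Here the tail is the full sum
\[
\rho^{\ev}(0)|\Phi(0)|+\sum_{m\ge1}\rho^{\ev}(2m)|\Phi(2m)|.
\]
The first term is at most \(C\), and the remaining sum is \(\ll_{q,\alpha}C\) by the same geometric series starting at \(m=1\). Since the target bound equals \(C\) when \(M=0\), the estimate holds in this case as well.

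The only step needing care is keeping the constant independent of \(k\), \(N\) and \(C\). That follows because the exact identity depends only on \(M\) and the series constant depends only on \(q,\alpha\). One might also check that the top-shell term \(q^{-M-1}\Phi(M)\) does not need separate treatment. It does not, because Proposition~\ref{prop:back-discrepancy} already absorbed it into the tail via \(q^{-M-1}\le\rho^{\ev}(M)\).
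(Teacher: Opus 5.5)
Your proof is correct and is essentially the paper's argument: both apply the tail bound of Proposition~\ref{prop:back-discrepancy} and sum a geometric series. The paper avoids your separate $M=0$ case by using $\rho^{\ev}(2m)\ll_q q^{-2m}$ for all $m\ge0$. You should state explicitly that the growth hypothesis with $\alpha<2$ gives $\Phi\in L^1(\rho^{\ev})$, since that is the hypothesis needed to invoke Proposition~\ref{prop:back-discrepancy}; your geometric-series bound, summed from $m=0$, already provides it.
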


\begin{proof}
Put \(M=N-k\).  The assumptions make \(M\) a nonnegative even integer, so
the growth condition and \(\alpha<2\) imply
\(\Phi\in L^1(\rho^{\ev})\).  Proposition~\ref{prop:back-discrepancy} therefore
applies and gives
\[
\left|\int \Phi\,d\mu^{\bwd}_{k,N}
      -\ang\Phi\rang_{\rho^{\ev}}\right|
 \le \sum_{2m\ge M}\rho^{\ev}(2m)|\Phi(2m)|.
\]
The formula \eqref{eq:rhoev} implies
\(\rho^{\ev}(2m)\ll_q q^{-2m}\), also at \(m=0\).  Hence the right-hand side
is at most a constant depending only on \(q\) times
\[
C\sum_{m\ge M/2}q^{-(2-\alpha)m}
 =\frac{C}{1-q^{-(2-\alpha)}}q^{-(2-\alpha)M/2},
\]
which is the asserted estimate.
\end{proof}

\begin{proposition}[Forward effective estimate]\label{prop:effective-forward}
Fix \(k\ge1\).  Let \(N\ge k\) satisfy
\(k+N\equiv0\pmod2\).  Let
\(\Phi:2\ZZ_{\ge0}\to\CC\) satisfy
\(|\Phi(2m)|\le Cq^{\alpha m}\) for every \(m\ge0\), with \(C>0\) and
\(\alpha<2\), and put
\[
\beta=\frac{2-\alpha}{2}.
\]
Then
\[
\left|\int \Phi\,d\mu^{\fwd}_{k,N}-\ang\Phi\rang_{\rho^{\ev}}\right|
\ll_{q,\alpha,k}
\begin{cases}
Cq^{-N/2}, & \alpha<1,\\
CNq^{-N/2}, & \alpha=1,\\
Cq^{-\beta N}, & 1<\alpha<2.
\end{cases}
\]
Equivalently, the exponential rate is governed by
\(\min(1/2,(2-\alpha)/2)\), with a factor linear in \(N\) at \(\alpha=1\)
(equivalently, logarithmic in the terminal-layer size \(q^N\)).
\end{proposition}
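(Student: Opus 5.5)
The plan is to start from the exact forward identity of Proposition~\ref{prop:forward-discrepancy} and bound its two parts separately. The always-forward atom contributes
\[
q^{-N}\bigl|\Phi(k+N)-\ang\Phi\rang_{\rho^{\ev}}\bigr|\le q^{-N}\bigl(Cq^{\alpha(k+N)/2}+|\ang\Phi\rang_{\rho^{\ev}}|\bigr)\ll_{q,\alpha,k} Cq^{-\beta N},
\]
since \(|\ang\Phi\rang_{\rho^{\ev}}|\ll_{q,\alpha}C\) for \(\alpha<2\). When \(\alpha\le1\) we have \(\beta\ge1/2\), so this is also \(\ll Cq^{-N/2}\). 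Hence this term is harmless in every regime.

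For the first-turn sum, index by \(r\in\{0,\dots,N-1\}\). The backward subshadow is rooted at \(k_r=k+r-1\), has depth \(N_r=N-r-1\), and has cutoff \(M_r=M-2r\) with \(M=N-k\). Admissible parity is inherited, since \(k_r+N_r=k+N-2\). I will split at \(r\le M/2\) versus \(r>M/2\).

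\emph{Early turns} (\(M_r\ge0\)). Proposition~\ref{prop:effective-back} bounds each term by \(\ll_{q,\alpha}Cq^{-\beta(M-2r)}\), so the contribution is
\[
\ll C\sum_{0\le r\le M/2}q^{-r-1}q^{-\beta M+2\beta r}=Cq^{-\beta M}\sum_{r\le M/2}q^{(2\beta-1)r},
\]
where \(2\beta-1=1-\alpha\). This geometric sum produces the three regimes:
\begin{itemize}
\item if \(\alpha>1\), the sum is bounded and the contribution is \(Cq^{-\beta M}\);
\item if \(\alpha=1\), every summand is \(1\) and we get \(C(M/2+1)q^{-M/2}\);
\item if \(\alpha<1\), the sum is dominated by its top term \(r=M/2\), giving \(q^{-\beta M+(1-\alpha)M/2}=q^{-M/2}\).
\end{itemize}
Since \(M=N-k\) with \(k\) fixed, \(q^{-\beta M}\ll_k q^{-\beta N}\) and \(M\le N\). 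This yields exactly the stated cases.

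\emph{Late turns} (\(M_r<0\), i.e.\ \(r>M/2\)). By Lemma~\ref{lem:back-before-bottom} the backward shadow is the point mass at \(j=2r-M\). So each term is at most \(q^{-r-1}\bigl(Cq^{\alpha(2r-M)/2}+O(C)\bigr)\). There are at most \(k\) such \(r\), all with \(r\ge M/2\). Summing gives \(\ll_k C\bigl(q^{-M/2}+q^{-\beta M}\cdot q^{(\alpha-1)r}\bigr)\), where \(r\le N-1\) and \(q^{(\alpha-1)r}\ll_k q^{(\alpha-1)M/2}\) because \(r-M/2\le k\). Both pieces are \(\ll_{q,\alpha,k} C\max(q^{-M/2},q^{-\beta M})\), which fits each regime after converting \(M\) to \(N\) at cost depending on \(k\).

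The main obstacle is the borderline bookkeeping in the early-turn geometric sum: keeping the exponent \((1-\alpha)r\) exact so that the \(\alpha=1\) case produces precisely a linear factor and not more. A secondary point is checking that the late-turn point masses, whose heights can exceed \(M\), do not spoil the \(\alpha>1\) rate. This is where the fixed-\(k\) dependence of the implicit constant is genuinely used, since there are only \(k\) such terms with height at most \(M+2k\).
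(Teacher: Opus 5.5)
Your treatment of the always-forward atom and of the early turns \(r\le M/2\) is correct and matches the paper's argument. The gap is in the late turns \(r>M/2\).

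\textbf{The counting is wrong.} Since \(r\) runs over \(M/2<r\le N-1\) and \(N-1=M+k-1\), there are \((N+k)/2-1\) late turns, a number that grows with \(N\). So the claims ``at most \(k\) such \(r\)'' and ``\(r-M/2\le k\)'' are false. What is true is \(r-M\le k-1\). Hence only the late turns with \(r>M\), i.e.\ with point-mass height \(d=2r-M>M\), are at most \(k-1\) in number.

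\textbf{The omitted terms matter.} The roughly \(M/2\) late turns with heights \(2\le d\le M\) are not accounted for. Writing \(r=(M+d)/2\), the \(\Phi\)-part of a late turn is at most \(q^{-r}Cq^{\alpha d/2}=Cq^{-M/2}q^{(\alpha-1)d/2}\). At \(\alpha=1\) each of these terms therefore has size \(Cq^{-M/2}\). For \(\Phi(2m)=q^m\), the \(\Phi\)-part of the late turns equals exactly \(\frac{(q-1)(N+k-2)}{2q}\,q^{-M/2}\), not \(O_k(q^{-M/2})\) as you claim.

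\textbf{The algebra for \(\alpha>1\) does not close either.} One has \(q^{-r}q^{\alpha(2r-M)/2}=q^{-\beta M}q^{(\alpha-1)(r-M)}\), not \(q^{-\beta M}q^{(\alpha-1)r}\). Inserting your bound \(q^{(\alpha-1)r}\ll_k q^{(\alpha-1)M/2}\) into your own expression would give \(q^{-(3-2\alpha)M/2}\), which is weaker than \(q^{-\beta M}\) when \(\alpha>1\).

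\textbf{Repair.} Use the paper's argument: sum the geometric series in \(d\) over \(2\le d\le N+k-2\), \(d\equiv M\pmod 2\).
\begin{itemize}
\item For \(\alpha<1\) the sum is bounded.
\item For \(\alpha=1\) it has \(O(N+k)\) terms.
\item For \(1<\alpha<2\) it is dominated by its top term, giving \(Cq^{-M/2}q^{(\alpha-1)(M+2k-2)/2}=Cq^{-\beta M}q^{(\alpha-1)(k-1)}\).
\end{itemize}
The part involving \(\ang\Phi\rang_{\rho^{\ev}}\) is a geometric tail, \(O_{q,\alpha}(Cq^{-M/2})\). With \(k\) fixed these fit the three stated regimes, so the proposition stands.

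Note two corrections to your account of where things come from. At \(\alpha=1\) the linear factor arises from the late turns as well as from the early-turn sum. The fixed-\(k\) dependence enters through the maximal late-turn height \(N+k-2=M+2k-2\), not through a bounded number of late terms.
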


\begin{proof}
Use the first-turn formula of Proposition~\ref{prop:forward-discrepancy}.  Write
\(M=N-k\ge0\).  As in the preceding proof, the growth condition implies
\(\Phi\in L^1(\rho^{\ev})\), so that formula applies.  The expectation satisfies
\(|\ang\Phi\rang_{\rho^{\ev}}|\ll_{q,\alpha}C\).  The always-forward term is
therefore bounded by
\[
q^{-N}\bigl(|\Phi(k+N)|+|\ang\Phi\rang_{\rho^{\ev}}|\bigr)
 \ll_{q,\alpha,k} Cq^{-\beta N}.
\]

For turns with \(r\le M/2\), the backward subshadow has the nonnegative even
cutoff \(M-2r\).  Proposition~\ref{prop:effective-back} gives the total
early-turn contribution
\[
\ll C\sum_{r\le M/2}q^{-r}q^{-\beta(M-2r)}
= Cq^{-\beta M}\sum_{r\le M/2}q^{(2\beta-1)r}.
\]
This is \(O(Cq^{-M/2})\) if \(\alpha<1\), \(O(CMq^{-M/2})\) if \(\alpha=1\), and
\(O(Cq^{-\beta M})\) if \(1<\alpha<2\).

For turns with \(r>M/2\), the backward subshadow has not yet reached the
bottom.  Lemma~\ref{lem:back-before-bottom} makes its normalized law the point
mass at \(d=k+2r-N=2r-M\).  The part containing \(\Phi\) is therefore bounded
by
\[
C\sum_{r>M/2}q^{-r}q^{\alpha(k+2r-N)/2}
 = Cq^{-M/2}
  \sum_{\substack{2\le d\le N+k-2\\ d\equiv M\ (2)}}
  q^{(\alpha-1)d/2}
 \le Cq^{-M/2}
  \sum_{\substack{0\le d\le N+k-2\\ d\equiv M\ (2)}}
  q^{(\alpha-1)d/2},
\]
where an empty sum is zero.  The part
containing \(\ang\Phi\rang_{\rho^{\ev}}\) is
\(O_{q,\alpha}(Cq^{-M/2})\), since the remaining first-turn weights form a
geometric tail.  The displayed finite sum now gives the same three cases: a
bounded geometric sum for \(\alpha<1\), a linear factor for \(\alpha=1\), and a
top-term bound \(O(Cq^{-\beta M})\) for \(1<\alpha<2\).  The always-forward
term was already estimated above.  Since \(k\) is fixed, replacing
\(M=N-k\) by \(N\) only changes the implicit constant.  Combining the three
parts proves the proposition.
\end{proof}

\subsection{Moving roots and total variation}
The dense-orbit application requires a version with moving initial height.  Let
\(N\ge0\), let \(\varepsilon_N\in\{\fwd,\bwd\}\), and let \(k_N\ge0\) be a
height with admissible parity.  If \(\varepsilon_N=\fwd\), assume
\(k_N\ge1\).  In addition assume
\[
M_N=N-k_N\ge0.
\]

\begin{proposition}[Uniform moving-root estimate]\label{prop:moving-TV}
For the rooted shadow distribution \(\mu^{\varepsilon_N}_{k_N,N}\),
\[
\|\mu^{\varepsilon_N}_{k_N,N}-\rho^{\ev}\|_{\TV}
\ll_q q^{-M_N/2}.
\]
If \(\varepsilon_N=\bwd\), then the sharper estimate \(O_q(q^{-M_N})\) holds.
\end{proposition}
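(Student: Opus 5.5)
We treat the two orientations separately. In both cases the estimate follows from the exact identities already proved, applied to shell indicators; this gives an \(\ell^1\) bound in which no constant depends on \(k_N\).

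\emph{Backward case.} Apply Proposition~\ref{prop:back-discrepancy} with \(\Phi=\mathbf 1_{\{j\}}\) for each even \(j\). The signed discrepancy \(\mu^{\bwd}_{k_N,N}-\rho^{\ev}\) is then exactly
\[
q^{-M_N-1}\delta_{M_N}-\sum_{2m\ge M_N+2}\rho^{\ev}(2m)\delta_{2m}.
\]
Its total variation is therefore
\[
q^{-M_N-1}+\sum_{2m\ge M_N+2}\rho^{\ev}(2m)\ll_q q^{-M_N},
\]
because \(\rho^{\ev}(2m)\ll_q q^{-2m}\) and the tail is a geometric sum. Only \(M_N\) enters this expression, so the bound is uniform in \(k_N\).

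\emph{Forward case.} Use the normalized first-turn decomposition \eqref{eq:first-turn-normalized} together with the identity \((q-1)\sum_{r=0}^{N-1}q^{-r-1}=1-q^{-N}\). This writes the discrepancy as
\[
q^{-N}(\delta_{k_N+N}-\rho^{\ev})+(q-1)\sum_{r=0}^{N-1}q^{-r-1}\bigl(\mu^{\bwd}_{k_N+r-1,N-r-1}-\rho^{\ev}\bigr).
\]
By the triangle inequality we split the sum over \(r\) into three parts and bound each.

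1. The always-forward term contributes at most \(2q^{-N}\le 2q^{-M_N}\).

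2. The early turns \(r\le M_N/2\) have backward cutoff \(M_N-2r\ge0\). By the backward case each contributes \(\ll_q q^{-r-1}q^{-(M_N-2r)}=q^{-M_N+r-1}\). Summing over \(r\le M_N/2\) gives \(\ll_q q^{-M_N/2}\).

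3. The late turns \(r>M_N/2\) are the main point, since \(r\) can run up to \(N-1\), which is much larger than \(M_N\) when \(k_N\) is large. We use only that each TV difference is at most \(2\). The total weight of these turns is \((q-1)\sum_{r>M_N/2}q^{-r-1}\le q^{-\lfloor M_N/2\rfloor-1}\), so their contribution is \(\ll q^{-M_N/2}\). We do not use the point-mass description from Lemma~\ref{lem:back-before-bottom} here; the weight bound alone suffices and is already uniform in \(k_N\).

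Adding the three parts gives \(O_q(q^{-M_N/2})\) with no dependence on \(k_N\).

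The late-turn range is the only delicate step, because it is where a naive estimate would pick up a dependence on \(k_N\). The geometric decay of the first-turn weights removes that dependence. The factor \(q^{-M_N/2}\) cannot be improved in the forward case, since the weight at \(r\approx M_N/2\) is of exactly that order. Finally, the admissible parity of \((k_N+r-1,\,N-r-1)\) holds for every \(r\), since the sum of the two entries is \(k_N+N-2\). Hence Proposition~\ref{prop:back-discrepancy} applies to each early-turn term.
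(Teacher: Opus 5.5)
Your proof is correct and is essentially the paper's argument: for backward states you use the exact top-shell-plus-missing-tail identity, and for forward states you split the first-turn decomposition at $r=M_N/2$, applying the backward bound to early turns and only the geometric tail of the weights to late turns. One caveat is that your closing remark that $q^{-M_N/2}$ cannot be improved in the forward case is false, because contributions from different $r$ cancel: Proposition~\ref{prop:finite-window-exact} gives $\mu^{\fwd}_{k_N,N}(j)=\rho^{\ev}(j)$ for all even $j<M_N$, hence $\|\mu^{\fwd}_{k_N,N}-\rho^{\ev}\|_{\TV}\le 2\sum_{j\ge M_N}\rho^{\ev}(j)\ll_q q^{-M_N}$, so you should drop that claim (the paper makes no sharpness assertion).
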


\begin{proof}
In the backward case, Proposition~\ref{prop:back-discrepancy} shows that the
signed difference from \(\rho^{\ev}\) consists of one top-shell correction and the
missing tail above height \(M_N\).  The total mass of these two pieces is
\(O(q^{-M_N})\).

Assume now that \(\varepsilon_N=\fwd\).  By the first-turn decomposition,
\[
\mu^{\fwd}_{k_N,N}-\rho^{\ev}
=q^{-N}(\delta_{k_N+N}-\rho^{\ev})
 +(q-1)\sum_{r=0}^{N-1}q^{-r-1}
 \bigl(\mu^{\bwd}_{k_N+r-1,N-r-1}-\rho^{\ev}\bigr).
\]
The first signed term has total-variation norm at most
\(2q^{-N}\le2q^{-M_N}\), so it is absorbed by
\(O(q^{-M_N/2})\).  For the summation, put
\[
M_r=(N-r-1)-(k_N+r-1)=M_N-2r.
\]
If \(r\le M_N/2\), then the corresponding backward subshadow has nonnegative
cutoff \(M_r\), and the backward estimate gives a total variation bound
\(O(q^{-M_r})\).  Hence the contribution of these early turns is
\[
\ll \sum_{r\le M_N/2}q^{-r}q^{-(M_N-2r)}\ll q^{-M_N/2}.
\]
If \(r>M_N/2\), we only use the trivial bound
\(\|\mu^{\bwd}_{k_N+r-1,N-r-1}-\rho^{\ev}\|_{\TV}\ll1\).  The total geometric
weight of these late turns is
\[
\sum_{r>M_N/2}q^{-r}\ll q^{-M_N/2}.
\]
Combining the three estimates proves the forward bound.
\end{proof}

\section{Expanding translates of compact unipotent orbits}\label{sec:compact}

\subsection{Compact-orbit normal form}
For a compact orbit \(Y\), we write \(\nu_Y\) for its normalized
\(U\)-invariant probability measure.
The following elementary normal form gives a canonical spherical
representative for every compact \(U\)-orbit.

\begin{proposition}[Spherical normal form for compact \(U\)-orbits]
\label{prop:compact-normal-form}
Let \(Y=\Gamma gU\).  The orbit \(Y\) is compact if and only if
\(g\infty\in\mathbb P^1(\Fq(t))\).  If \(Y\) is compact, its parameter
stabilizer
\[
\Lambda_U(Y):=\{s\in F:\Gamma gu(s)=\Gamma g\}
\]
is independent of the representative and of the base point chosen on \(Y\).
It has covolume \(q^{-2m-1}\) for a unique integer \(m=m(Y)\), and for this
integer
\[
(h_K)_*\nu_Y=(h_K)_*\nu_{Y_m},
\qquad Y_m:=\Gamma a_mU.
\]
More precisely, \(m=-v_\infty(r)\), where one can write
\[
\gamma^{-1}g=d(r)u(s_0)=a_m d(\eta)u(s_0)
\]
for some \(\gamma\in\Gamma\), \(\eta\in\cO^\times\), and \(s_0\in F\).
\end{proposition}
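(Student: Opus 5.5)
Since \(\Gamma=\mathrm{SL}_2(A)\) acts transitively on \(\mathbb P^1(\Fq(t))\), if \(g\infty\) is rational we pick \(\gamma\in\Gamma\) with \(\gamma^{-1}g\infty=\infty\). Then \(\gamma^{-1}g\) lies in the upper Borel subgroup, so \(\gamma^{-1}g=d(r)u(s_0)\) with \(r\in F^\times\). Write \(r=t^{m}\eta\) with \(\eta\in\cO^\times\); because \(v_\infty(t)=-1\), this means \(m=-v_\infty(r)\), and \(d(r)=a_md(\eta)\). The stabilizer is computed from the conjugation formulas: \(\Gamma gu(s)=\Gamma g\) exactly when \(d(r)u(s_0)u(s)u(-s_0)d(r)^{-1}=u(r^2s)\in\Gamma\), that is, when \(r^2s\in A\). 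Hence \(\Lambda_U(Y)=r^{-2}A\). This group does not depend on the base point, since \(U\) is abelian and so moving the base point along \(U\) does not change the stabilizer. Replacing \(g\) by \(\gamma'g\) with \(\gamma'\in\Gamma\) does not change it either. Since \(A\) has covolume \(q^{-1}\), because \(t^{-1}\cO\) is a fundamental domain, the group \(r^{-2}A\) has covolume \(|r|^{-2}q^{-1}=q^{-2m-1}\). This gives uniqueness of \(m\). Compactness follows as well, because \(Y\cong F/\Lambda_U(Y)\) is then compact.

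For the converse direction I would argue contrapositively. If \(g\infty\) is irrational, then no nontrivial \(u(s)\) can lie in \(g^{-1}\Gamma g\). Such an element would be a unipotent element of \(\Gamma\) fixing \(g\infty\), and the fixed points of unipotent elements of \(\Gamma\) are rational. So the stabilizer is trivial, the orbit map \(F\to Y\) is injective, and \(Y\) cannot carry a finite \(U\)-invariant measure. Concretely, a compact orbit would be a continuous injective image of \(F\), hence homeomorphic to \(F\), and \(F\) is not compact. I would also note that Lemma~\ref{lem:irrational-dense} identifies this case with density.

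For the spherical equality, I would use the parametrization \(\nu_Y=\) the pushforward of normalized Haar measure on \(F/r^{-2}A\) under \(s\mapsto\Gamma g u(s)\). Then
\[
h_K(\Gamma gu(s))=h_K(\Gamma a_md(\eta)u(s_0+s))=h_K(\Gamma a_m u(\eta^2(s_0+s))d(\eta)).
\]
Because \(d(\eta)\in K\), this equals \(h_K(\Gamma a_mu(\eta^2(s_0+s)))\). The substitution \(s'=\eta^2(s_0+s)\) preserves Haar measure, since \(|\eta|=1\), and it maps \(r^{-2}A\)-cosets to \(\eta^2r^{-2}A=t^{-2m}A\)-cosets. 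Finally, \(t^{-2m}A\) is exactly the stabilizer of \(Y_m\), because \(r=t^m\) for \(a_m\). So the two normalized measures correspond, and the height pushforwards agree.

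The main obstacle is the converse. The key point there is that a stabilizer group which is discrete but not cocompact cannot occur, and this is exactly what rationality controls. I would first try the fixed-point argument above. If the paper's standard instead requires a stabilizer that is a lattice in \(F\) for compactness, then trivial stabilizer already excludes compactness, and that is the cleanest route.
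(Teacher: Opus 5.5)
Your proposal is correct and follows essentially the paper's proof. It uses the same Borel normal form $\gamma^{-1}g=a_md(\eta)u(s_0)$ and the same stabilizer $r^{-2}A=\eta^{-2}t^{-2m}A$ of covolume $q^{-2m-1}$. The spherical identity comes from the same $d(\eta)\in K$ plus Haar-preserving substitution, and the converse rests on the same key fact that the fixed line of a nontrivial unipotent element of $\mathrm{SL}_2(A)$ is $\Fq(t)$-rational; you simply run the paper's argument contrapositively.

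One step needs more careful phrasing: ``continuous injective image of $F$, hence homeomorphic to $F$''. A continuous injection onto a compact space is not automatically a homeomorphism. What makes it one is the standard Baire-category open-mapping fact for orbit maps of the $\sigma$-compact group $U$ onto a closed orbit. The paper relies on the same fact when it calls $\Lambda_U(Y)\backslash F\to Y$ a homeomorphism.
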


\begin{proof}
Suppose first that \(g\infty\) is rational.  Since \(A=\Fq[t]\) is a
principal ideal domain, a Bezout identity shows that
\(\Gamma=\mathrm{SL}_2(A)\) acts transitively on
\(\mathbb P^1(\Fq(t))\).  Choose \(\gamma\in\Gamma\) with
\(\gamma\infty=g\infty\).  Then \(\gamma^{-1}g\) fixes \(\infty\), and hence
\[
\gamma^{-1}g=d(r)u(s_0)
\]
for some \(r\in F^\times\) and \(s_0\in F\).  Write \(r=t^m\eta\) with
\(m=-v_\infty(r)\in\ZZ\) and \(\eta\in\cO^\times\).  The factor \(u(s_0)\)
is absorbed by the right \(U\)-orbit, while
\[
d(\eta)u(s)=u(\eta^2s)d(\eta),
\]
so
\[
Y=\Gamma a_m d(\eta)U=Y_m d(\eta).
\]
The stabilizer of \(Y_m\) in the additive \(U\)-parameter is
\[
\Lambda_m=\{s\in F:\Gamma a_mu(s)=\Gamma a_m\}=t^{-2m}A.
\]
It is a cocompact lattice in \(F\), with fundamental domain
\(t^{-2m-1}\cO\).  Hence \(Y_m\), and therefore \(Y=Y_md(\eta)\), is compact.
Right translation by \(d(\eta)\in K\) does not change \(h_K\), and the
parameter change \(s\mapsto\eta^2s\) preserves additive Haar measure.
Consequently,
\((h_K)_*\nu_Y=(h_K)_*\nu_{Y_m}\).

The parameter stabilizer of \(Y\) is
\[
\Lambda_U(Y)=\{s\in F:gu(s)g^{-1}\in\Gamma\}
          =\eta^{-2}t^{-2m}A.
\]
Replacing \(g\) by \(\gamma_0g\), \(\gamma_0\in\Gamma\), or by
\(gu(s_1)\), \(s_1\in F\), leaves this subgroup unchanged, using
\(\Gamma\gamma_0=\Gamma\) and the commutativity of \(U\).  Its covolume under
the normalization \(ds(\cO)=1\) is
\(q^{-2m-1}\).  This covolume is intrinsic to the \(U\)-orbit and recovers
\(m\); in particular, neither the choice of \(\gamma\) nor another normal-form
presentation can change \(m(Y)\).  This proves uniqueness.

Conversely, suppose that \(Y\) is compact.  Since \(\Gamma\) is discrete,
\(\Lambda_U(Y)\) is discrete.  The orbit map
\[
\Lambda_U(Y)\backslash F\longrightarrow Y,
\qquad \Lambda_U(Y)+s\longmapsto\Gamma gu(s),
\]
is the canonical homeomorphism from the stabilizer quotient onto the closed
orbit \(Y\).  Thus compactness of \(Y\) implies that \(\Lambda_U(Y)\) is
cocompact, so it contains some \(s\ne0\).  The nontrivial unipotent matrix
\(gu(s)g^{-1}\in\mathrm{SL}_2(A)\) fixes \(g\infty\).  Its unique fixed line
is the kernel of \(gu(s)g^{-1}-I\), a matrix with entries in \(\Fq(t)\); that
line is therefore \(\Fq(t)\)-rational.  Hence
\(g\infty\in\mathbb P^1(\Fq(t))\), completing the converse.
\end{proof}

\subsection{The exact shadow of a standard closed horosphere}
For \(j\in\ZZ\), let
\[
\widetilde v_j=
 \begin{bmatrix}t^j&0\\0&1\end{bmatrix}v_0
\]
be the vertices of the standard apartment, extended in both directions, and
let
\[
e^-=(\widetilde v_0,\widetilde v_{-1}).
\]
Since both \(\widetilde v_1\) and \(\widetilde v_{-1}\) project to height one,
\(e^-\) is a forward state at height one.

\begin{lemma}[Polynomial cylinders and a terminal layer]
\label{lem:standard-compact-shadow}
For \(d\ge1\), put
\[
\mathcal P_d=
 \left\{\sum_{j=1}^{2d-1}c_jt^j:c_j\in\Fq\right\}.
\]
Then
\[
\left\{a_{-d}u(P)v_0:P\in\mathcal P_d\right\}
   =\Desc_{2d-1}(e^-).
\]
In particular, the set on the left has \(q^{2d-1}\) vertices.
\end{lemma}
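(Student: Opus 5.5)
The plan is to compute both sides explicitly in the lattice model, using the standard parametrization of vertices in the horoball direction \(\widetilde v_{-1},\widetilde v_{-2},\ldots\) by unipotent translates. First I rewrite the left-hand side. The conjugation formula \(a_{-d}u(s)a_d=u(t^{-2d}s)\) gives
\[
a_{-d}u(P)v_0=u(t^{-2d}P)\,a_{-d}v_0 .
\]
Projectively \([a_{-d}]=[\operatorname{diag}(t^{-2d},1)]\), so \(a_{-d}v_0=\widetilde v_{-2d}\). Put \(x_P=t^{-2d}P=\sum_{j=1}^{2d-1}c_jt^{j-2d}\). Its exponents of \(t\) run from \(-(2d-1)\) to \(-1\), so in terms of \(\pi=t^{-1}\) one has \(x_P=\sum_{i=1}^{2d-1}c'_i\pi^i\). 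As \(P\) ranges over \(\mathcal P_d\), the elements \(x_P\) therefore run exactly once over the polynomial representatives of \(\pi\cO/\pi^{2d}\cO\).

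Next I describe \(\Desc_{2d-1}(e^-)\). For \(n\ge0\) and \(x\in\cO\), I will prove three facts.
\begin{enumerate}[label=(\alph*)]
\item Since \(u(x)\in K\) fixes \(v_0\), it is a tree automorphism. Hence \(u(x)\widetilde v_0,u(x)\widetilde v_{-1},\ldots,u(x)\widetilde v_{-n}\) is the geodesic from \(v_0\) to \(u(x)\widetilde v_{-n}\).
\item One has \(u(x)\widetilde v_{-n}=u(y)\widetilde v_{-n}\) if and only if \(x-y\in\pi^n\cO\). Indeed, with \(D=\operatorname{diag}(\pi^n,1)\), conjugation gives \(D^{-1}u(x-y)D=u(\pi^{-n}(x-y))\). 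This lies in \(\bK\) exactly when \(x-y\in\pi^n\cO\), because an upper unipotent matrix in \(\mathrm{PGL}_2(\cO)\) must have integral entry.
\item The \(q^n\) vertices \(u(x)\widetilde v_{-n}\), \(x\in\cO/\pi^n\cO\), are therefore distinct and all at distance \(n\) from \(v_0\). I will check that they form the whole sphere of radius \(n\) except the part going through \(\widetilde v_1\). Here I would rather use (b) together with the count of terminal vertices than a full enumeration.
\end{enumerate}

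Applying (a) and (b) with \(n=1\): the geodesic from \(v_0\) to \(u(x)\widetilde v_{-2d}\) passes through \(u(x)\widetilde v_{-1}\), which equals \(\widetilde v_{-1}\) precisely when \(x\in\pi\cO\). Hence each vertex \(u(x)\widetilde v_{-2d}\) with \(x\in\pi\cO\) is the endpoint of a non-backtracking path \(\widetilde v_0\to\widetilde v_{-1}\to z_1\to\cdots\to z_{2d-1}\). Thus it lies in \(\Desc_{2d-1}(e^-)\). By (b) with \(n=2d\), distinct classes in \(\pi\cO/\pi^{2d}\cO\) give distinct vertices. This produces \(q^{2d-1}\) distinct elements of \(\Desc_{2d-1}(e^-)\), which has exactly \(q^{2d-1}\) elements by the descendant-shadow definition. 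So the inclusion is an equality, and the count follows.

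The main care point is the stabilizer computation in (b). Working in \(\mathrm{PGL}_2\), I must check that no scalar rescaling can make \(u(\pi^{-n}z)\) integral when \(z\notin\pi^n\cO\). This is where I would argue carefully: \(\operatorname{diag}(\lambda,\lambda)u(w)\) has diagonal entries \(\lambda\), so it lies in \(\mathrm{GL}_2(\cO)\) only if \(\lambda\in\cO^\times\), which then forces \(w\in\cO\). The rest is the cardinality comparison, which avoids describing the descendant layer directly.
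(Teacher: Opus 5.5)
Your proof is correct. It uses the same parametrization as the paper: the vertex \(u(x)\widetilde v_{-2d}\) is exactly the paper's lattice class \(L_x=[\cO\pi^{2d}e_1+\cO(e_2+xe_1)]\), and your \(x_P=t^{-2d}P\) is the paper's \(b=\pi^{2d}P\). Where you differ is in how you show the whole layer is hit.

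\begin{itemize}
\item The paper describes the layer directly. It asserts that every vertex at distance \(2d\) from \(v_0\) with first edge \(\widetilde v_0\to\widetilde v_{-1}\) has a unique representative \(L_b\), \(b\in\pi\cO/\pi^{2d}\cO\). It then reads off the intermediate path vertices as truncations of \(b\) modulo \(\pi^r\cO\).
\item You prove only membership and injectivity, and get surjectivity from the count \(|\Desc_{2d-1}(e^-)|=q^{2d-1}\). Membership comes from transporting the apartment geodesic by \(u(x)\in K\). Injectivity comes from the stabilizer computation \(D^{-1}u(z)D=u(\pi^{-n}z)\).
\end{itemize}

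The paper's route gives more information: it exhibits the cylinder structure of the layer, recording which residue is chosen at each step, which is what the lemma's name refers to. Your route is shorter and avoids checking that every lattice in the layer has the form \(L_b\).

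Your mechanism is the same conjugation-plus-counting argument the paper uses for Proposition~\ref{prop:dense-exact-shadow}. In fact the lemma is the special case \(g=a_{-d}\), \(N=2d-1\) of that proposition. There \(y_N=\widetilde v_{N-2d}\), so \(e_{2d-1}=(\widetilde v_0,\widetilde v_{-1})=e^-\), and \(\mathcal P_d\) is a set of representatives of \(t^{2d-1}\cO/\cO\).

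Item (c) of your plan is never used in the final argument and can be dropped.
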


\begin{proof}
Put \(\pi=t^{-1}\), and let
\(e_1=(1,0)^{\mathsf T}\), \(e_2=(0,1)^{\mathsf T}\) be the standard basis of
\(F^2\).  In the lattice model, the vertices at distance \(2d\)
from \(v_0=[\cO^2]\) whose first edge is
\(\widetilde v_0\to\widetilde v_{-1}\) have the unique representatives
\[
L_b=\cO(\pi^{2d}e_1)+\cO(e_2+be_1),
\qquad b\in\pi\cO/\pi^{2d}\cO.
\]
Indeed, replacing \(b\) by \(b+\pi^{2d}c\), \(c\in\cO\), changes the second
generator by an \(\cO\)-multiple of the first, while equality of the two
lattices forces precisely this congruence.  Truncating the expansion of \(b\)
modulo \(\pi^r\cO\), for \(r=2,\ldots,2d\), gives the successive vertices on
the path; passing from \(r\) to \(r+1\) chooses one new residue in
\(\pi^r\cO/\pi^{r+1}\cO\simeq\Fq\).  The condition \(b\in\pi\cO\) fixes the
first edge to be \(\widetilde v_0\to\widetilde v_{-1}\).  Thus these lattices
are exactly the \(q^{2d-1}\) vertices of \(\Desc_{2d-1}(e^-)\), not merely a
set of the same cardinality.

Up to a projective scalar, the matrix \(a_{-d}u(P)\) is
\[
\begin{pmatrix}\pi^{2d}&\pi^{2d}P\\0&1\end{pmatrix},
\]
so it represents \(L_b\) with \(b=\pi^{2d}P\).  Multiplication by
\(\pi^{2d}\) identifies
\[
\mathcal P_d
 \longrightarrow \pi\cO/\pi^{2d}\cO,
\qquad P\longmapsto\pi^{2d}P,
\]
bijectively.  This proves the asserted equality of terminal layers.
\end{proof}

\begin{proposition}[Exact compact-orbit shadow]
\label{prop:compact-shadow}
For \(d\ge1\), the \(K\)-spherical pushforward of Haar probability on
\(Y_{-d}=\Gamma a_{-d}U\) is
\[
(h_K)_*\nu_{Y_{-d}}=\mu^{\fwd}_{1,\,2d-1}.
\]
Consequently, if \(Y\) is a compact \(U\)-orbit with the integer \(m(Y)\) from
Proposition~\ref{prop:compact-normal-form}, then for
\(n\ge0\) with \(n>m(Y)\),
\begin{equation}\label{eq:compact-exact-shadow}
(h_K)_*\lambda_{Y,n}^{\mathrm{per}}
 =\mu^{\fwd}_{1,\,2(n-m(Y))-1},
\end{equation}
where
\[
\lambda_{Y,n}^{\mathrm{per}}(f)
   :=\int_Y f(ya_{-n})\,d\nu_Y(y).
\]
\end{proposition}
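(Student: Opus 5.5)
The plan is to compute the pushforward directly from a fundamental domain for the orbit, and then reduce general compact orbits to the standard ones $Y_{-d}$.

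\textbf{Step 1: parametrizing $Y_{-d}$.} By the normal-form proof in Proposition~\ref{prop:compact-normal-form}, the parameter stabilizer of $Y_{-d}=\Gamma a_{-d}U$ is
\[
\Lambda_{-d}=t^{2d}A .
\]
A fundamental domain for $F/\Lambda_{-d}$ is $t^{2d-1}\cO$.

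Decompose this domain into cosets of $\cO$ with representatives
\[
P\in\mathcal P_d\cup\{0\text{-shifted}\}, \qquad P=\sum_{j=0}^{2d-1}c_jt^j .
\]
Since $u(\cO)\subset K$, the function $s\mapsto h_K(\Gamma a_{-d}u(s))$ is constant on each coset $P+\cO$. It equals the quotient height of the vertex $a_{-d}u(P)v_0$.

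The constant term $c_0$ can be absorbed into $\cO$. The honest decomposition is therefore into $q^{2d-1}$ cosets of $\cO$, indexed by $\mathcal P_d$, each of measure $1$. The total volume is $q^{2d-1}$, matching the covolume of $t^{2d}A$.

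\textbf{Step 2: identifying the terminal layer.} Normalized Haar measure on $Y_{-d}$ pushes forward to the uniform average, over $P\in\mathcal P_d$, of $\delta_{h(a_{-d}u(P)v_0)}$. Lemma~\ref{lem:standard-compact-shadow} identifies these vertices with $\Desc_{2d-1}(e^-)$.

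The edge $e^-$ is a forward state at height one. To confirm this, note that $\widetilde v_0$ has height $0$. Then $\widetilde v_{-1}$ has height $1$ by Proposition~\ref{prop:local-height}, and the incoming edge comes from height $0=k-1$.

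The height counts therefore give exactly $\mu^{\fwd}_{1,2d-1}$.

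\textbf{Step 3: general compact orbits.} For a compact orbit $Y$, write $Y=Y_md(\eta)$ as in Proposition~\ref{prop:compact-normal-form}. Then
\[
Ya_{-n}=\Gamma a_m d(\eta)Ua_{-n}=\Gamma a_{m-n}\,d(\eta)\,U ,
\]
using that $d(\eta)$ commutes with $a_{-n}$ and that $a_{-n}$ normalizes $U$. So $Ya_{-n}$ is the orbit $Y_{m-n}$ translated on the right by $d(\eta)\in K$.

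The pushforward $\nu_Y\mapsto$ translate by $a_{-n}$ is the $U$-invariant probability measure on $Ya_{-n}$; it is invariant under the conjugated parameters and hence is $\nu_{Ya_{-n}}$. Right translation by $d(\eta)\in K$ does not change $h_K$, and it preserves Haar measure in the parameter. Applying Step 2 with $d=n-m\ge1$ therefore gives \eqref{eq:compact-exact-shadow}.

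\textbf{Main obstacle.} The step needing care is the bookkeeping in Step 1: checking that the fundamental domain's $\cO$-cosets correspond bijectively to $\mathcal P_d$ with equal measure. This is what makes the pushforward a \emph{uniform} average over the terminal layer, rather than a weighted one.
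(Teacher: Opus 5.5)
Your proposal is correct and follows essentially the paper's route. The $\cO$-cosets of the fundamental domain $t^{2d-1}\cO$ are exactly the paper's representatives $\mathcal P_d$ of $t^{2d}A\backslash F/\cO$. Lemma~\ref{lem:standard-compact-shadow} then identifies the resulting vertices with $\Desc_{2d-1}(e^-)$, and your reduction $Ya_{-n}=Y_{m-n}d(\eta)$ matches the paper's.

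The only differences are cosmetic. You invoke uniqueness of the $U$-invariant probability on the compact orbit $Ya_{-n}$ instead of the paper's explicit parameter change $s\mapsto t^{2n}\eta^2 s$. The stray ``$\mathcal P_d\cup\{0\text{-shifted}\}$'' display in Step~1 should be deleted, since you immediately replace it with the correct indexing by $\mathcal P_d$.
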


\begin{proof}
For \(Y_{-d}\), the stabilizer in the \(U\)-parameter is
\[
a_d\Gamma a_{-d}\cap U=\{u(s):s\in t^{2d}A\}.
\]
Partition the compact parameter quotient \(t^{2d}A\backslash F\) into
\(\cO\)-cosets.  The resulting additive double quotient
\[
t^{2d}A\backslash F/\cO
\]
has the unique representatives \(\mathcal P_d\).  Indeed, reduction modulo
\(\cO\) removes the terms of degree at most zero in a Laurent expansion,
whereas reduction modulo \(t^{2d}A\) removes the polynomial terms of degree at
least \(2d\).  The uniquely surviving polynomial part has degrees
\(1,\ldots,2d-1\).  Normalized Haar measure gives equal mass
\(q^{-(2d-1)}\) to these \(q^{2d-1}\) classes.
Lemma~\ref{lem:standard-compact-shadow} identifies the corresponding lifted
vertices with a depth-\((2d-1)\) descendant layer from the forward state at
height one.  Applying the quotient height map, with any further
\(\Gamma\)-identifications counted with their lifted multiplicity, proves the
first formula.

For a general compact orbit, write \(Y=\Gamma gU\).  Proposition~\ref{prop:compact-normal-form} gives
\(\gamma^{-1}g=a_m d(\eta)u(s_0)\).  Absorb \(u(s_0)\) into the orbit
parameter.  Since \(d(\eta)\) commutes with \(a_{-n}\), the identities
\[
d(\eta)u(s)=u(\eta^2s)d(\eta),
\qquad
u(r)a_{-n}=a_{-n}u(t^{2n}r)
\]
give
\[
a_m d(\eta)u(s)a_{-n}
 =a_{m-n}u(t^{2n}\eta^2s)d(\eta).
\]
The change of parameter \(s\mapsto t^{2n}\eta^2s\) carries normalized Haar
probability on the compact parameter quotient to normalized Haar probability
on the new quotient, and the final factor \(d(\eta)\in K\) is invisible in
spherical projection.  Hence the spherical translate is the normalized Haar
measure on \(Y_{m-n}\).  Taking \(d=n-m>0\) gives
\eqref{eq:compact-exact-shadow}.
\end{proof}

\begin{remark}[Why the inverse direction is necessary]
For the standard orbit \(Y_0=\Gamma U\), write
\(s=p+r\) with \(p\in A\) and \(r\in t^{-1}\cO\).  Since \(u(p)\in\Gamma\) and
\[
a_n^{-1}u(r)a_n=u(t^{-2n}r)\in K,
\]
one has
\[
\Gamma u(s)a_nK=\Gamma a_nK.
\]
Thus \(Y_0a_n\) has a point-mass spherical projection and moves into the cusp;
it is \(Y_0a_{-n}\) that expands on the right quotient.
\end{remark}

\subsection{Equidistribution and finite-window exactness}
\begin{theorem}[\(K\)-spherical equidistribution of compact-orbit translates]
\label{thm:compact-eq}
Let \(Y\) be a compact right \(U\)-orbit and let \(m=m(Y)\).  For every bounded
right \(K\)-invariant function \(f=\Phi\circ h_K\) and every
\(n\in\ZZ_{\ge0}\) with \(n>m\),
\[
\left|\lambda_{Y,n}^{\mathrm{per}}(f)-\int_Xf\,d\mu_X\right|
 \ll_q \|\Phi\|_\infty q^{-(n-m-1)}.
\]
In particular,
\[
\lambda_{Y,n}^{\mathrm{per}}(f)\longrightarrow\int_Xf\,d\mu_X.
\]
\end{theorem}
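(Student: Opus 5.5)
The plan is to reduce everything to the rooted-shadow estimates via the exact compact-orbit realization. By Proposition~\ref{prop:compact-shadow}, for \(n>m\) we have \((h_K)_*\lambda_{Y,n}^{\mathrm{per}}=\mu^{\fwd}_{1,2d-1}\) with \(d=n-m\ge1\). Since \(f=\Phi\circ h_K\), Proposition~\ref{prop:haar-rho} gives
\[
\lambda_{Y,n}^{\mathrm{per}}(f)-\int_X f\,d\mu_X=\int\Phi\,d\mu^{\fwd}_{1,2d-1}-\ang\Phi\rang_{\rho^{\ev}},
\]
and its absolute value is at most \(\|\Phi\|_\infty\|\mu^{\fwd}_{1,2d-1}-\rho^{\ev}\|_{\TV}\). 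The root height is \(k=1\) and the depth is \(N=2d-1\), so the parity \(k+N=2d\) is admissible. The cutoff is \(M=2d-2=2(n-m-1)\ge0\).

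Next, invoke Proposition~\ref{prop:moving-TV}, which is equivalent to Theorem~\ref{thm:shadow-principle} with a single forward component. It gives
\[
\|\mu^{\fwd}_{1,2d-1}-\rho^{\ev}\|_{\TV}\ll_q q^{-M/2}=q^{-(n-m-1)}.
\]
This is exactly the claimed bound, and the implied constant depends only on \(q\). The forward bound \(q^{-M/2}\) is what is needed here, not the sharper backward bound. The factor \(1/2\) coming from the late first turns is precisely compensated because the tree depth \(2d-1\) is twice the diagonal index.

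Finally, convergence follows by letting \(n\to\infty\) with \(m\) fixed. The only point needing care is bookkeeping: the dictionary \([a_n]=\bar a_{2n}\) turns the diagonal index into cutoff \(2(n-m)-2\), and the case \(n=m+1\) (cutoff \(0\)) is covered because the estimate is then trivially \(O(1)\). I expect no real obstacle, since all the substantive work lies in Proposition~\ref{prop:compact-shadow} and Proposition~\ref{prop:moving-TV}.
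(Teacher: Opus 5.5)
Your proposal is correct and follows the paper's proof exactly. Proposition~\ref{prop:compact-shadow} gives the exact shadow with root height $1$, depth $2(n-m)-1$ and cutoff $2(n-m)-2$; the forward bound of Proposition~\ref{prop:moving-TV} then yields $q^{-(n-m-1)}$; and Proposition~\ref{prop:haar-rho} identifies $\int_X f\,d\mu_X$ with $\ang\Phi\rang_{\rho^{\ev}}$.
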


\begin{proof}
In \eqref{eq:compact-exact-shadow}, the root height is \(1\), the depth is
\(2(n-m)-1\), and the cutoff is \(2(n-m)-2\).  The assertion follows from
Proposition~\ref{prop:moving-TV} and Proposition~\ref{prop:haar-rho}.
\end{proof}

\begin{theorem}[Eventual exactness for compactly supported spherical tests]
\label{thm:compact-support-exact}
Let \(Y\) and \(m=m(Y)\) be as above.  Suppose that
\(f=\Phi\circ h_K\) is right \(K\)-invariant and that
\(R\in2\ZZ_{\ge0}\) and
\(\operatorname{supp}\Phi\subset\{0,2,\ldots,R\}\).  If
\(n\in\ZZ_{\ge0}\) and
\[
2(n-m)-2>R,
\]
then
\[
\lambda_{Y,n}^{\mathrm{per}}(f)=\int_Xf\,d\mu_X.
\]
\end{theorem}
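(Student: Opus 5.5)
The plan is to reduce the statement to the exact shadow formula of Proposition~\ref{prop:compact-shadow} and the finite-window identity of Proposition~\ref{prop:finite-window-exact}. No estimate is needed, only bookkeeping of the parameters.

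\emph{Step 1: reduce to the height law.}
Since \(f=\Phi\circ h_K\) is right \(K\)-invariant, the discussion in the subsection on shadow realization gives
\[
\lambda_{Y,n}^{\mathrm{per}}(f)=\int\Phi\,d\bigl((h_K)_*\lambda_{Y,n}^{\mathrm{per}}\bigr).
\]
Proposition~\ref{prop:haar-rho} gives \(\int_X f\,d\mu_X=\ang\Phi\rang_{\rho^{\ev}}\). Because \(\Phi\) is finitely supported, all sums are finite and no integrability issue arises.

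\emph{Step 2: identify the shadow parameters.}
The hypothesis \(2(n-m)-2>R\ge0\) forces \(n-m\ge2\), so in particular \(n>m\). Proposition~\ref{prop:compact-shadow} then gives
\[
(h_K)_*\lambda_{Y,n}^{\mathrm{per}}=\mu^{\fwd}_{1,N},\qquad N=2(n-m)-1.
\]
Check the hypotheses of Proposition~\ref{prop:finite-window-exact} with \(\varepsilon=\fwd\) and \(k=1\ge1\):
\begin{itemize}
\item the parity \(k+N=2(n-m)\) is even, so \((1,N)\) is admissible;
\item the cutoff \(M=N-1=2(n-m)-2\) satisfies \(M\ge0\) and, by assumption, \(M>R\).
\end{itemize}

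\emph{Step 3: conclude.}
Proposition~\ref{prop:finite-window-exact} yields \(\mu^{\fwd}_{1,N}(j)=\rho^{\ev}(j)\) for every even \(j\le R\). Since \(\operatorname{supp}\Phi\subset\{0,2,\ldots,R\}\), summing gives
\[
\int\Phi\,d\mu^{\fwd}_{1,N}=\ang\Phi\rang_{\rho^{\ev}},
\]
which is the claim.

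There is no real obstacle. The only point requiring care is confirming that the cutoff \(2(n-m)-2\) recorded in Theorem~\ref{thm:two-families}(i) is exactly the \(M\) used in Proposition~\ref{prop:finite-window-exact}, which holds because the root height is \(1\). One should also note that \(n>m\) is implied by the hypothesis rather than assumed separately.
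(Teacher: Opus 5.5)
Your proposal is correct and follows essentially the same route as the paper, which also combines the exact compact-orbit shadow identity \eqref{eq:compact-exact-shadow} with Proposition~\ref{prop:finite-window-exact}. Your explicit parameter checks are the bookkeeping the paper leaves implicit: admissible parity of \((1,2(n-m)-1)\), cutoff \(M=2(n-m)-2>R\), the fact that \(n>m\) follows from the hypothesis, and the identification \(\int_X f\,d\mu_X=\ang\Phi\rang_{\rho^{\ev}}\).
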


\begin{proof}
Combine the exact shadow identity \eqref{eq:compact-exact-shadow} with
Proposition~\ref{prop:finite-window-exact}.
\end{proof}

\begin{remark}
The general equidistribution of expanding compact horospherical orbits belongs
to the framework of \cite{CFS2021}.  The content specific to the present
\(K\)-spherical Nagao model is the exact identity
\eqref{eq:compact-exact-shadow}, its finite-scale discrepancy, and the
eventual finite-window equality.
\end{remark}

\section{Dense unipotent orbits and standard F{\o}lner balls}
\label{sec:dense}

\subsection{A canonical horospherical truncation}
Let \(x=\Gamma g\in X\), and let \(\bar g\) be the image of \(g\) in
\(\bG\).  Using the elements \(\bar a_N\) defined in
Section~\ref{sec:spherical-dictionary}, put
\[
y_N=\bar g\bar a_Nv_0.
\]
Then \((y_0,y_1,\ldots)\) is the geodesic ray from
\(y_0=\bar g v_0\) to the boundary point
\[
\xi_g=g\infty.
\]
Its quotient-height sequence and incoming rooted edge at scale \(N\) are
\[
k_N(x)=h(y_N),
\qquad
e_N=(y_{N+1},y_N).
\]
We define the state \(\varepsilon_N(x)\) by
\[
\varepsilon_N(x)=
\begin{cases}
\bwd,&h(y_{N+1})=k_N(x)+1,\\
\fwd,&h(y_{N+1})=k_N(x)-1.
\end{cases}
\]
At height zero only the first case occurs, and it is interpreted by
Convention~\ref{conv:bottom-backward}.  Since
\(d_\Tree(y_0,y_N)=N\) and \(\bar g\), as the projective image of an element
of \(\mathrm{SL}_2(F)\), preserves bipartite type, the quotient height has
parity
\[
k_N(x)\equiv N\pmod2.
\]
Thus the rooted shadow from \(e_N\) has admissible even parity.  We put
\[
M_N(x)=N-k_N(x).
\]
This number can be negative for an initial finite range of scales, but it will
eventually be nonnegative for every dense orbit by
Corollary~\ref{cor:MN-diverges}.
When \(x\) is fixed, we abbreviate
\(k_N=k_N(x)\), \(\varepsilon_N=\varepsilon_N(x)\), and
\(M_N=M_N(x)\).

With the normalization \(ds(\cO)=1\) fixed above, define
\[
U_N=\{u(s):s\in t^N\cO\},
\qquad
\lambda_{x,N}^{\mathrm{den}}(f)
 =q^{-N}\int_{t^N\cO}f(xu(s))\,ds.
\]
Since \(ds(t^N\cO)=q^N\), this is a probability measure.  The sequence
\((U_N)\) is a compact-open F{\o}lner sequence.  In fact it has the stronger
exact-invariance property that, for
each fixed \(s_0\in F\), one has
\[
s_0+t^N\cO=t^N\cO
\]
for all sufficiently large \(N\).  More generally, the same equality holds
simultaneously for every \(s_0\) in a prescribed compact subset of \(F\).
This uniform-on-compacts exact invariance is the only F{\o}lner property used
below.

\begin{proposition}[Exact terminal-layer realization]
\label{prop:dense-exact-shadow}
For every \(x\in X\) and \(N\ge0\), the map
\[
t^N\cO/\cO\longrightarrow \Desc_N(e_N),
\qquad
s+\cO\longmapsto \bar g u(s)v_0,
\]
is a bijection.  Consequently,
\begin{equation}\label{eq:dense-exact-shadow}
(h_K)_*\lambda_{x,N}^{\mathrm{den}}
 =\mu^{\varepsilon_N(x)}_{k_N(x),N}.
\end{equation}
Possible identifications after quotienting by \(\Gamma\) are counted with their
multiplicity in the lifted terminal layer.
\end{proposition}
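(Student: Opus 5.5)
The plan is to reduce to the standard apartment by translating with \(\bar g^{-1}\), and then to rerun the lattice computation of Lemma~\ref{lem:standard-compact-shadow}, now with \(\infty\) as the ancestor direction.

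\emph{Reduction.} Set \(w_N=\bar a_Nv_0=\widetilde v_N\). Since \(\bar g\) is a tree automorphism, it carries non-backtracking paths to non-backtracking paths. It therefore suffices to show that \(s+\cO\mapsto u(s)v_0\) is a bijection from \(t^N\cO/\cO\) onto \(\Desc_N\bigl((\widetilde v_{N+1},\widetilde v_N)\bigr)\), and then to apply \(\bar g\).

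\emph{Well-definedness.} If \(s'\in s+\cO\), then \(u(s')=u(s)u(s'-s)\) with \(u(s'-s)\in K\), and \(K\) fixes \(v_0\). So the map descends to \(t^N\cO/\cO\), which has \(q^N\) elements.

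\emph{Image and descent.} The lattice \(u(s)\cO^2=\cO(e_1)+\cO(se_1+e_2)\) is a vertex. Suppose \(|s|=q^{j}\) with \(1\le j\le N\). The ray from \(u(s)v_0\) toward \(\infty\) is \(u(s)\widetilde v_i\), \(i\ge0\). For \(i\ge j\), the element \(\bar a_{-i}u(s)\bar a_i=u(t^{-i}s)\) lies in \(K\), so \(u(s)\widetilde v_i=\widetilde v_i\). Hence the ray joins the standard ray at \(\widetilde v_j\), and \(d_\Tree(u(s)v_0,\widetilde v_N)=N\). The geodesic from \(\widetilde v_N\) to \(u(s)v_0\) runs down the standard ray to \(\widetilde v_j\) and then leaves along \(u(s)\widetilde v_{j-1}\neq\widetilde v_{j-1}\). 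It never uses \(\widetilde v_{N+1}\), so it is a non-backtracking path of length \(N\) starting from the edge \((\widetilde v_{N+1},\widetilde v_N)\). The case \(s\in\cO\) gives \(v_0\) itself, the end of the straight path.

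\emph{Injectivity.} If \(u(s)v_0=u(s')v_0\), then \(u(s-s')\in K\) up to scalar. Since a diagonal scalar cannot make the unipotent \(u(s-s')\) integral otherwise, this gives \(s-s'\in\cO\). Both sets have \(q^N\) elements, so the map is a bijection. Alternatively, one can match representatives directly, as in the proof of Lemma~\ref{lem:standard-compact-shadow}: the residues of \(s\) at degrees \(1,\dots,N\) choose the successive edges.

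\emph{Orientation.} The incoming edge at scale \(N\) is \(e_N=(y_{N+1},y_N)=\bar g(\widetilde v_{N+1},\widetilde v_N)\), so the descendant set is exactly \(\bar g\) applied to the standard one. Its state \(\varepsilon_N(x)\) and root height \(k_N(x)\) are as defined above. Parity \(k_N\equiv N\pmod 2\) has already been recorded, and Convention~\ref{conv:bottom-backward} handles the case of root height zero.

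\emph{Pushforward.} Write \(t^N\cO\) as a disjoint union of \(q^N\) cosets \(s+\cO\), each of Haar mass \(1\), hence mass \(q^{-N}\) after normalization. The function \(s\mapsto h_K(xu(s))=h(\bar gu(s)v_0)\) is constant on each coset. Therefore \((h_K)_*\lambda^{\mathrm{den}}_{x,N}\) is \(q^{-N}\) times the height count of \(\Desc_N(e_N)\), with each terminal vertex counted once even when several are identified by \(\Gamma\). By Proposition~\ref{prop:local-height}, this count depends only on the state, and it equals \(\mathsf C^{\varepsilon_N}_{k_N,N}\). This gives \eqref{eq:dense-exact-shadow}.

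The main obstacle is the descent step: verifying carefully that the geodesic from the root to \(u(s)v_0\) never turns back toward \(\widetilde v_{N+1}\), and that distinct cosets give distinct descendants. The ray computation above handles both points. The remaining steps are bookkeeping.
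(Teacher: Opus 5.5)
Your proposal is correct and follows essentially the same route as the paper. You reduce by \(\bar g^{-1}\) to the standard ray, use \(\bar a_i^{-1}u(s)\bar a_i=u(t^{-i}s)\) to control which \(\widetilde v_i\) are fixed, get injectivity from \(u(s)v_0=v_0\iff s\in\cO\), conclude by counting \(q^N\) elements, and push forward coset by coset. The only difference is cosmetic: the paper gets the non-backtracking property in one line by noting that \(u(s)\) fixes \(\widetilde v_N\) and \(\widetilde v_{N+1}\) and so transports the standard segment \(\widetilde v_{N+1}\to\widetilde v_N\to\cdots\to v_0\), whereas you locate the branch point \(\widetilde v_j\) (with \(|s|=q^j\)) explicitly.
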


\begin{proof}
Apply the tree automorphism \(\bar g^{-1}\).  It is enough to consider the
standard ray \((v_0,v_1,\ldots)\) pointing to \(\infty\).  Direct conjugation
shows that
\[
u(s)v_j=v_j\quad\Longleftrightarrow\quad s\in t^j\cO.
\]
Indeed,
\(\bar a_j^{-1}u(s)\bar a_j=u(t^{-j}s)\in\bK\) exactly under this condition.
Since \(t^N\cO\subset t^{N+1}\cO\), every \(s\in t^N\cO\) fixes both \(v_N\)
and \(v_{N+1}\), and it sends the
standard non-backtracking segment
\[
v_{N+1}\to v_N\to v_{N-1}\to\cdots\to v_0
\]
to an allowed segment with terminal vertex \(u(s)v_0\).  Two parameters give
the same terminal vertex precisely when
\(u(s-s')v_0=v_0\), equivalently \(s-s'\in\cO\).  We have therefore obtained
an injection
\[
t^N\cO/\cO\hookrightarrow\Desc_N(v_{N+1}\to v_N).
\]
Its source has \(q^N\) elements, while the target has \(q^N\) elements by the
definition of a descendant shadow; hence the injection is a bijection.
Equivalently, the successive quotients
\(t^j\cO/t^{j-1}\cO\simeq\Fq\) record the \(q\) choices at each descendant
step.  Translating back by \(\bar g\) proves the first assertion.

The \(q^N\) cosets modulo \(\cO\) have equal Haar measure, and a right
\(K\)-invariant function is constant on each coset.  Taking quotient heights
therefore gives exactly the normalized rooted law in
\eqref{eq:dense-exact-shadow}.
\end{proof}

Figure~\ref{fig:folner-shadow} summarizes this re-rooting of the horospherical
ball as a terminal descendant layer.

\begin{figure}[ht]
\centering
\begin{tikzpicture}[x=1.02cm,y=.88cm,>=Stealth]
  \begin{scope}[on background layer]
    \draw[panel] (-.82,-.92) rectangle (7.34,5.82);
    \fill[figblue!3] (4.55,4.05)--(.05,.20)--(6.55,.20)--cycle;
    \draw[terminalband] (-.38,-.28) rectangle (6.82,.28);
  \end{scope}

  \node[vertex] (y0) at (0,0) {};
  \node[lab,anchor=east] at (-.12,.08) {$y_0$};
  \node[vertex,label={[lab]above left:$y_1$}] (y1) at (.82,.76) {};
  \node[vertex,label={[lab]above left:$y_2$}] (y2) at (1.64,1.52) {};
  \node[vertex,label={[lab]above left:$y_N$}] (yN) at (4.55,4.05) {};
  \node[vertex,label={[lab]above:$y_{N+1}$}] (yNp) at (5.35,4.76) {};
  \draw[accent] (y0)--(y1)--(y2)--(2.20,2.03);
  \node[smallgraylab] at (2.62,2.40) {$\cdots$};
  \draw[accent] (3.02,2.70)--(yN)--(yNp);
  \draw[->,accent] (yNp)--(6.22,5.48)
    node[right,lab,font=\scriptsize\bfseries,text=figblue] {$\xi_g$};
  \draw[incoming]
    ($(yNp)!.18!(yN)$)--($(yNp)!.82!(yN)$);
  \node[callout,anchor=west,text=figorange]
    at (5.54,4.13) {root edge $e_N$};
  \node[smalllab,rotate=41,fill=white,rounded corners=1.5pt,
    inner xsep=2.2pt,inner ysep=1.4pt] at (3.45,3.10)
    {$d_\Tree(y_0,y_N)=N$};

  \foreach \x/\name in {3.65/a,4.15/b,4.65/c,5.15/d,5.65/e}{
    \node[openvertex,inner sep=1.35pt] (b\name) at (\x,3.05) {};
    \draw[subtree] (yN)--(b\name);
  }
  \foreach \x/\parent/\name in {
    3.05/a/f,3.35/a/g,3.65/b/h,3.95/b/i,
    4.35/c/j,4.65/c/k,5.05/d/l,5.35/d/m,5.65/e/n,5.95/e/o}{
    \node[openvertex,inner sep=1.05pt] (c\name) at (\x,2.08) {};
    \draw[subtree] (b\parent)--(c\name);
  }
  \node[smallgraylab] at (4.52,1.47) {$\vdots$};
  \draw[subtree] (cf)--(.15,.12);
  \draw[subtree] (cg)--(1.20,.12);
  \draw[subtree] (ch)--(2.30,.12);
  \draw[subtree] (ck)--(4.15,.12);
  \draw[subtree] (cn)--(5.35,.12);
  \draw[subtree] (co)--(6.50,.12);

  \draw[horoline] (-.38,0)--(6.82,0);
  \foreach \x in {-.05,.25,.55,.85,1.15,1.45,1.75,2.25,2.55,2.85,
                   3.70,4.00,4.30,4.60,5.05,5.35,5.65,5.95,6.25,6.55}{
    \node[terminal] at (\x,0) {};
  }
  \node[callout] at (3.18,.48)
    {terminal layer $\Desc_N(e_N)$ on the horosphere through $y_0$};
  \draw[decorate,decoration={brace,mirror,amplitude=4pt},draw=figteal]
    (-.10,-.34)--(6.58,-.34)
    node[midway,below=5pt,lab,text=black!80]
    {$t^N\cO/\cO\ \longleftrightarrow\ \Desc_N(e_N)$:
     exactly $q^N$ equal-mass vertices};
\end{tikzpicture}
\caption{The standard F{\o}lner ball becomes a depth-\(N\) terminal layer after
re-rooting at \(y_N\).  The root state is determined by the direction of the
projected geodesic at scale \(N\).}
\label{fig:folner-shadow}
\end{figure}

\subsection{Dense orbits and continued-fraction excursions}
The value \(\xi_g=g\infty\) depends on the lift \(g\), but replacing \(g\) by
\(\gamma g\), \(\gamma\in\Gamma\), applies \(\gamma\) to the boundary point and
does not change the quotient-height sequence \(k_N(x)\).  Rationality of
\(\xi_g\), and the tail of its continued-fraction coding, are therefore
intrinsic to the orbit.

\begin{lemma}[Compact versus dense unipotent orbits]
\label{lem:irrational-dense}
The right \(U\)-orbit \(xU\) is compact if and only if
\(\xi_g\in\mathbb P^1(\Fq(t))\).  If
\(\xi_g\notin\mathbb P^1(\Fq(t))\), then \(xU\) is dense.
\end{lemma}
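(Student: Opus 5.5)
The compactness criterion is already contained in Proposition~\ref{prop:compact-normal-form}: the orbit \(xU=\Gamma gU\) is compact exactly when \(g\infty\in\mathbb P^1(\Fq(t))\). So the first sentence needs only a citation. The substance is the density statement for an irrational endpoint.

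\textbf{Step 1: invoke Mohammadi's orbit-closure theorem.} For the unipotent subgroup \(U\) of \(G=\mathrm{SL}_2(F)\) and the lattice \(\Gamma\), \cite[Corollary~4.2]{Mohammadi2011} says that every orbit closure \(\overline{xU}\) is homogeneous. It has the form \(xL\), where \(L\) is a closed subgroup containing \(U\) and \(xL\) carries a finite \(L\)-invariant measure. I would convert this into a list of cases by classifying the closed subgroups \(L\) with \(U\le L\le G\) for which \(\Gamma g L\) is closed with finite volume. In \(\mathrm{SL}_2\) of a local field of positive characteristic, such an \(L\) is either \(G\) itself, or is contained in the normalizer of \(U\), namely the Borel subgroup \(P=\{\mathrm{diag}(r,r^{-1})u(s)\}\). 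Some care is needed in characteristic \(p\), since there are intermediate closed subgroups of \(P\) and of \(U\) itself; I would handle these uniformly by working inside \(P\).

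\textbf{Step 2: rule out the Borel case.} Suppose \(L\le P\) and \(xL\) has finite volume. Then \(g L g^{-1}\cap\Gamma\) must be a lattice in \(gLg^{-1}\), which fixes \(g\infty\). Intersecting with the normal subgroup \(gUg^{-1}\), I would argue as follows. The image of \(gLg^{-1}\cap\Gamma\) in the compact-mod-\(U\) quotient \(P/U\simeq F^\times\) acts on \(U\) by multiplication by squares. A lattice in \(L\) must then contain nontrivial elements of \(gUg^{-1}\); otherwise \(\Gamma\cap gPg^{-1}\) would embed discretely in \(F^\times\), which is impossible with finite covolume in \(L\supset U\). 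This dichotomy is the step I expect to be most delicate, and I would try to make it cleanest by passing to the unimodular kernel. Alternatively, Borel subgroups \(P\) are not unimodular, so finite-volume \(xL\) with \(L\le P\) forces \(L\) to lie in \(M_\infty U\)-type unimodular subgroups. Either way we obtain a nontrivial unipotent \(\gamma\in\Gamma\cap gUg^{-1}\). Exactly as in the converse part of Proposition~\ref{prop:compact-normal-form}, its unique fixed line is \(\Fq(t)\)-rational. Hence \(\xi_g=g\infty\) is rational, a contradiction.

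\textbf{Step 3: conclude.} The only remaining case is \(L=G\), which gives \(\overline{xU}=X\), so the orbit is dense. Finally I would note that rationality of \(\xi_g\) does not depend on the lift, since \(\Gamma\) preserves \(\mathbb P^1(\Fq(t))\). The main obstacle is the classification in Step~1 together with the unimodularity argument in Step~2. I would attack it first via the unimodularity of \(L\), which is forced by \(xL\) carrying a finite invariant measure with a lattice stabilizer.
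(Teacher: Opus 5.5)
Your first sentence (compactness via Proposition~\ref{prop:compact-normal-form}) and your lift-independence remark are fine. The density part, however, has two gaps.

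\textbf{Step 1: the cited result is not an orbit-closure theorem.} You read \cite[Corollary~4.2]{Mohammadi2011} as a Ratner-type classification \(\overline{xU}=xL\) with \(xL\) of finite \(L\)-volume. The paper uses it in a different form. In the nonuniform arithmetic case, after transporting to \(G/\Gamma\) by \(\iota\) and checking that \(a_1\) is of class~A with \(W_G^+(a_1)=U\), it is an equidistribution statement. The averages over \(a_mU_0a_{-m}=\{u(s):s\in t^{2m}\cO\}\) converge weak-\(*\) to a homogeneous measure. That measure is already identified through the minimal \(\Fq(t)\)-parabolic \(P\) with \(gUg^{-1}\subset{}^\circ P\). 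So your Step~1 rests on a statement you have not extracted from the reference.

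Used as stated, the corollary makes your classification of \(L\) unnecessary. A proper rational parabolic of \(\mathrm{SL}_2\) is the stabilizer of a rational line, and \({}^\circ P\) is its unipotent radical. Hence irrationality of \(\xi_g\) forces \(P=G\), and the limit is Haar probability (since \(G^+=G\)). Every averaging measure is supported on the closed set \(\overline{Ug^{-1}\Gamma}\), while Haar measure has full support, so the orbit is dense. That is the paper's proof.

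\textbf{Step 2: the discreteness claim is unproved.} Even granting Step~1, the argument does not work as written. The quotient \(P/U\simeq F^\times\) is not compact. You claim that \(\Lambda=L\cap g^{-1}\Gamma g\) would embed \emph{discretely} in \(L/U\) if \(\Lambda\cap U=\{e\}\), but this is exactly what needs proof: an injective image of a discrete group need not be discrete. A correct version runs as follows.
\begin{enumerate}[label=(\roman*)]
\item \(L\) contains the lattice \(\Lambda\), so \(L\) is unimodular.
\item Write \(L=(L\cap D)\ltimes U\) with \(D=\{d(r)\}\). Since \(d(r)\) scales Haar measure on \(U\) by \(|r|^2\), unimodularity gives \(L\le M_\infty U\).
\item For \(\lambda=d(\eta)u(s)\in\Lambda\), the element \(g\lambda g^{-1}\in\Gamma\) has trace \(\eta+\eta^{-1}\in A\cap\cO=\Fq\). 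So \(\eta\) is one of the finitely many roots of the polynomials \(X^2-cX+1\) with \(c\in\Fq\).
\item Hence \(\Lambda\) has finite image in \(L/U\), and \(\Lambda\cap U\) has finite index in \(\Lambda\). Since \(L\supseteq U\) is noncompact, \(\Lambda\) is infinite, so \(\Lambda\cap U\neq\{e\}\).
\item This yields \(s\neq0\) with \(gu(s)g^{-1}\in\Gamma\). The converse argument of Proposition~\ref{prop:compact-normal-form} then makes \(\xi_g\) rational.
\end{enumerate}

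Your dichotomy \(L=G\) or \(L\le P\) is correct but needs a justification. If \(\ell\in L\) moves \(\infty\), then a \(U\)-conjugate of \(\ell U\ell^{-1}\) is the lower unipotent group, which together with \(U\) generates \(G\).

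With these repairs your route is a valid, more hands-on alternative. It replaces the \({}^\circ P\) criterion by a direct lattice argument. The price is that it needs a genuine orbit-closure theorem as input, whereas the paper needs only the equidistribution output plus full support of Haar measure.
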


\begin{proof}
The rational case, including compactness, is
Proposition~\ref{prop:compact-normal-form}.  We prove the converse without
changing from \(U\) to the larger group \(H_\infty\).

Apply the inversion map introduced in Section~\ref{sec:spherical-dictionary}:
\[
\iota(xU)=\{u^{-1}g^{-1}\Gamma:u\in U\}=Ug^{-1}\Gamma
       \subset G/\Gamma.
\]
The conjugate \(gUg^{-1}\) is a maximal unipotent subgroup whose unique fixed
point in \(\mathbb P^1(F)\) is \(g\infty=\xi_g\).  Every proper
\(\Fq(t)\)-parabolic subgroup of \(\mathrm{SL}_2\) is the stabilizer of a
unique \(\Fq(t)\)-rational line.  It follows that
\[
gUg^{-1}\ \hbox{is contained in a proper \(\Fq(t)\)-parabolic}
\quad\Longleftrightarrow\quad
\xi_g\in\mathbb P^1(\Fq(t)).
\]

The cited corollary identifies the limiting homogeneous measure through the
minimal rational parabolic containing the conjugated horospherical subgroup.
In rank one, irrationality of its fixed point rules out every proper rational
parabolic.  We now verify the quotient and averaging conventions needed to
apply it.

Suppose now that \(\xi_g\) is irrational.  We spell out the hypotheses of
\cite[Corollary~4.2]{Mohammadi2011}.  In its global notation, take
\[
F_{\mathrm{glob}}=\Fq(t),\qquad S=\{\infty\},\qquad \mathcal O_S=A,
\]
so that the resulting \(S\)-arithmetic quotient is \(G/\Gamma\).  Here
\(\Gamma=\mathrm{SL}_2(\mathcal O_S)\) is a congruence lattice.  The eigenvalues
of \(a_1=\operatorname{diag}(t,t^{-1})=
\operatorname{diag}(\pi^{-1},\pi)\) are integral powers of the uniformizer
\(\pi\), so \(a_1\) is an element of class~A in the terminology of the cited
paper.  Moreover, its maximal expanding horospherical subgroup is
\[
W_G^+(a_1):=\{h\in G:a_1^nha_1^{-n}\longrightarrow e
                  \text{ as }n\longrightarrow-\infty\}=U.
\]
Thus the hypotheses of the nonuniform case of
\cite[Theorem~1.1(ii)]{Mohammadi2011}, used in Corollary~4.2 there, are
satisfied.  At the left
quotient point \(h\Gamma=g^{-1}\Gamma\), the parabolic condition in that
corollary concerns
\[
h^{-1}Uh=gUg^{-1}\subset{}^\circ P.
\]
Here, following the cited paper, if \(P=LW\) is a Levi decomposition, then
\({}^\circ P=[L,L](F)W(F)\).
For a proper rank-one parabolic, \({}^\circ P\) is its unipotent radical, so
this containment condition is equivalent to rationality of the fixed line.
By the preceding rank-one observation, irrationality of \(\xi_g\) forces the
minimal \(F_{\mathrm{glob}}\)-parabolic \(P\) with this property to be \(G\)
itself.  The
symbol \(G^+\) in the conclusion of the cited result denotes the subgroup
generated by the \(F\)-split one-parameter unipotent subgroups of \(G\).  In
the present case \(G^+=G=\mathrm{SL}_2(F)\), since the upper and lower root
subgroups generate \(\mathrm{SL}_2(F)\).

Taking the diagonal element \(a_1\) and
\(U_0=\{u(s):s\in\cO\}\), the averaging sets in the cited theorem are
precisely the even subsequence of our balls:
\[
a_mU_0a_{-m}
 =\{u(s):s\in t^{2m}\cO\}.
\]
Their Haar mass is \(q^{2m}\), in agreement with the normalization in
\(\lambda_{x,2m}^{\mathrm{den}}\).
Under inversion, the average of \(xu(s)\) becomes the left average of
\(u(-s)g^{-1}\Gamma\); the sign is immaterial because
\(t^{2m}\cO=-t^{2m}\cO\).  Corollary~4.2 therefore gives weak-* convergence
of these even-subsequence averages to Haar probability on \(G/\Gamma\).
Let \(C=\overline{Ug^{-1}\Gamma}\).  Every averaging measure is supported on
the closed set \(C\), so its weak-* limit is supported on \(C\) as well.
Indeed, every function in \(C_c(G/\Gamma\setminus C)\) has zero integral
against each averaging measure and hence against the limit.
Since Haar probability has full support, this forces \(C=G/\Gamma\).  Thus the
left \(U\)-orbit is dense, and applying \(\iota^{-1}\) proves that \(xU\) is
dense in \(\Gamma\backslash G\).
\end{proof}

For \(\xi\in F\setminus\Fq(t)\), we use the Artin continued fraction
\[
\xi=[A_0;A_1,A_2,\ldots].
\]
Explicitly, \(\xi_0=\xi\),
\[
A_r=\lfloor\xi_r\rfloor,\qquad
\xi_{r+1}=(\xi_r-A_r)^{-1},
\]
and
\[
d_r=\deg A_r\ge1\qquad(r\ge1).
\]
The fractional-linear action of \(\Gamma\) may change the displayed digits,
but it changes only a finite initial part of the associated quotient-ray
excursion sequence.  In particular, the eventual sequence of their degrees is
well defined up to reindexing; this is the function-field modular-ray form of
the continued-fraction tail equivalence in
\cite[Theorem~3.2 and Proposition~3.5]{BroisePaulin2007}.

We also make explicit how the cited geodesic coding is used here.  Its
canonical first-return section is formulated for a bi-infinite oriented
geodesic, oriented so that its future endpoint is \(\xi\); the future return labels are the
Artin digits \(A_1,A_2,\ldots\).  After choosing one return vertex, we call the
positive half of this coded geodesic the \emph{standard Artin ray} associated
with \(\xi\).  This ray is auxiliary: every other geodesic ray with endpoint
\(\xi\) eventually coalesces with it in the tree.

The next proposition first states the exact geometry without an indexing
ambiguity and then identifies it with the continued-fraction degrees.  Whenever
the ray has infinitely many returns, discard a possible finite initial segment
and denote by
\[
0\le\tau_1<\tau_2<\cdots
\]
the successive return times for which
\[
k_{\tau_r}(x)=0,\qquad k_{\tau_r+1}(x)=1.
\]
Let \(d_r^{\mathrm{geo}}\) be the maximum height during the excursion from
\(\tau_r\) to \(\tau_{r+1}\).

\begin{proposition}[Return-time and continued-fraction coding]
\label{prop:cf-coding}
If \(\xi_g\) is irrational, the return times \(\tau_r\) form an infinite
sequence.  For every \(r\),
\[
\tau_{r+1}=\tau_r+2d_r^{\mathrm{geo}}.
\]
After deleting and relabeling finitely many initial continued-fraction digits,
\[
d_r^{\mathrm{geo}}=\deg A_r=:d_r.
\]
Writing \(B_r=\tau_r\), one therefore has
\[
B_{r+1}=B_r+2d_r.
\]
More precisely, during the ascending half of the \(r\)-th excursion,
\[
N=B_r+j,\quad k_N=j,\quad
\varepsilon_N=\bwd,\quad M_N=B_r
\qquad(0\le j\le d_r-1),
\]
whereas during the descending half,
\[
N=B_r+d_r+j,\quad k_N=d_r-j,\quad
\varepsilon_N=\fwd,\quad M_N=B_r+2j
\qquad(0\le j\le d_r-1).
\]
At \(N=B_{r+1}\), the next backward block begins and
\(M_N=B_{r+1}\).
\end{proposition}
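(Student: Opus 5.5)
The plan is to separate the statement into a purely tree-geometric part, which concerns the height profile along a geodesic ray in the Nagao quotient, and an arithmetic part, which identifies the excursion heights with Artin digit degrees.

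\textbf{Step 1: forced descent.} First I would show that a descent is forced until the ray reaches the bottom. Let \((y_N)\) be the ray from \(\bar g v_0\) to \(\xi_g\), and suppose \(h(y_{N+1})=h(y_N)-1\ge0\) with \(h(y_N)\ge1\). By Proposition~\ref{prop:local-height}, \(y_N\) is the unique neighbor of \(y_{N+1}\) of height \(h(y_N)\).

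\emph{Correction to my first formulation:} I need the unique up-neighbor of \(y_{N+1}\) to be \(y_N\), i.e.\ I need the vertex \(y_{N+1}\) itself to have height one more than \(y_N\)'s successor. Reading the pairs in the correct order, the claim is this. If \(h(y_{N+1})=h(y_N)-1\) and \(h(y_{N+1})\ge1\), then the unique up-neighbor of \(y_{N+1}\) is \(y_N\). Non-backtracking then forces \(h(y_{N+2})=h(y_{N+1})-1\).

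Hence, once the height starts to decrease, it decreases by one at each step until it reaches \(0\). From height \(0\) every neighbor has height \(1\). Consequently the height sequence \(k_N\) is a concatenation of excursions. Each excursion rises from \(0\) to a maximum \(d^{\mathrm{geo}}_r\), possibly pausing its rise only by turning, and a turn is irreversible. So each excursion is exactly an increase \(0\to d^{\mathrm{geo}}_r\) followed by a decrease \(d^{\mathrm{geo}}_r\to0\). This gives \(\tau_{r+1}=\tau_r+2d^{\mathrm{geo}}_r\).

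\textbf{Step 2: infinitely many returns.} If there were only finitely many returns, then eventually \(k_N=k_{N_0}+(N-N_0)\). The ray would then eventually be a ray toward a cusp, i.e.\ a \(\GammaP\)-translate of a tail of \((v_n)\). Its endpoint would therefore be a \(\GammaP\)-translate of \(\infty\), hence in \(\mathbb P^1(\Fq(t))\). Since \(\xi_g\) is irrational, this is excluded.

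\textbf{Step 3: the rooted data.} With Steps 1 and 2 in hand, the displayed formulas for \(k_N,\varepsilon_N,M_N\) are bookkeeping. On the ascending half \(N=B_r+j\) we have \(k_N=j\) and \(h(y_{N+1})=j+1\), so the state is \(\bwd\) and \(M_N=B_r\). On the descending half \(N=B_r+d_r+j\) we have \(k_N=d_r-j\) and \(h(y_{N+1})=k_N-1\), so the state is \(\fwd\) and \(M_N=B_r+2j\). At the end of the excursion \(M_{B_{r+1}}=B_{r+1}\).

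\textbf{Step 4: arithmetic identification, \(d^{\mathrm{geo}}_r=\deg A_r\).} This is the step I expect to be the main obstacle, because of the renormalization bookkeeping. I would use the elements \(u(-A_r)\) and \(w=\begin{pmatrix}0&1\\-1&0\end{pmatrix}\) of \(\Gamma\), which implement \(\xi_r\mapsto\xi_{r+1}=(\xi_r-A_r)^{-1}\).

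The key local computation is as follows. If \(\zeta\in F\) has \(|\zeta|=q^{d}\) with \(d\ge1\), then the ray from \(v_0\) to \(\zeta\) agrees with \(v_0,v_1,\ldots,v_d\) for \(d\) steps and then branches off. This holds because \(u(\zeta)v_j=v_j\) exactly for \(j\ge d\), so the common part with the ray to \(\infty\) has length \(d\). Along that common part the heights are \(0,1,\ldots,d\). By Step 1 the ray must then descend, reaching height \(0\) at time \(2d\).

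At the return vertex, I would translate by the \(\Gamma\)-element \(w\,u(-A)\), where \(A=\lfloor\zeta\rfloor\). This element fixes the height data and moves the return vertex back to \(v_0\), while sending the endpoint to the next complete quotient, whose degree is the next digit. Applying this first to \(\xi_1\), which is obtained after removing \(A_0\) and inverting, and iterating gives \(d^{\mathrm{geo}}_r=\deg A_r\) along the standard Artin ray. This is consistent with \cite[Theorem~3.2, Proposition~3.5]{BroisePaulin2007}.

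\textbf{Step 5: comparison with an arbitrary lift.} Finally, the ray \((y_N)\) from \(\bar g v_0\) and the standard Artin ray have the same endpoint \(\xi_g\), so they eventually coalesce. Changing the lift by \(\Gamma\) preserves \(k_N\). Therefore the two excursion sequences agree after discarding a finite initial segment and shifting indices, which accounts for the relabeling of finitely many digits in the statement.
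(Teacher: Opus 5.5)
Your proposal is correct. Steps 1, 2, 3 and 5 are essentially the paper's argument. Forced descent from the local height rule gives the excursion shape and \(\tau_{r+1}=\tau_r+2d_r^{\mathrm{geo}}\). An eventually cusp-ward tail is carried by \(\GammaP\) onto a tail of \((v_n)\), so its endpoint is rational. Coalescence of rays with a common endpoint accounts for the finite relabeling.

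The genuine difference is Step 4. The paper imports the first-return coding of Broise--Paulin. It uses their relation between consecutive normalizing elements and reads off the displacement \(2\deg A_r\) between consecutive return vertices from the double coset \(\bK u(A_r)\bK=\bK\bar a_{2d}\bK\). It then needs the absence of intermediate section returns, together with the local height rule, to turn displacement \(2d\) into maximum height \(d\).

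You instead construct the coding directly. From \(u(\zeta)v_j=v_j\iff j\ge d\), the ray from \(v_0\) to \(\zeta\) with \(|\zeta|=q^d\) climbs \(v_0,\dots,v_d\). It then descends to \(u(\zeta)v_0=u(\lfloor\zeta\rfloor)v_0\), because \(u(\zeta-\lfloor\zeta\rfloor)\in K\). The height-preserving element \(w\,u(-\lfloor\zeta\rfloor)\) then renormalizes the remaining tail to the ray from \(v_0\) to the next complete quotient.

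Your route is more elementary and self-contained. It yields the height word \(0,1,\dots,d,\dots,1,0\) without the displacement-plus-no-intermediate-return step, and it makes the \emph{standard Artin ray} concrete. The paper's route instead buys direct compatibility with the established geodesic coding and its conventions.

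Two bookkeeping points need fixing when you write it out:
\begin{itemize}
\item The element \(w=\left(\begin{smallmatrix}0&1\\-1&0\end{smallmatrix}\right)\) sends \(\xi_r\) to \(-\xi_{r+1}\), not to \(\xi_{r+1}\). This is harmless, since \(\deg\lfloor-\xi\rfloor=\deg\lfloor\xi\rfloor\). Alternatively, use \(\left(\begin{smallmatrix}0&1\\1&0\end{smallmatrix}\right)\in\mathrm{GL}_2(\Fq[t])\), whose image lies in \(\GammaP\).
\item The ray from \(v_0\) to \(\xi_1\) ends at \(\xi_1\). Before Step 5 you should pull it back by the height-preserving element carrying \(\xi_0\) to \(\xi_1\), so that it ends at \(\xi_g\).
\end{itemize}
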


\begin{proof}
By Proposition~\ref{prop:local-height}, a projected geodesic leaving height zero
must first ascend.  While ascending, it either continues through the unique
edge toward the cusp or turns through one of the other \(q-1\) edges.  Once it
turns, non-backtracking forces it to descend by one at every step until it
returns to height zero.  Hence an excursion of maximal height
\(d_r^{\mathrm{geo}}\) has length \(2d_r^{\mathrm{geo}}\), and substitution in
\(M_N=N-k_N\) gives the displayed state and cutoff formulas with
\(d_r^{\mathrm{geo}}\) in place of \(d_r\).

If there were only finitely many returns, there could be no later downward
turn: by the local height rule, every such turn would force the ray to descend
to height zero.  The quotient height would therefore increase by one forever.
Choose a lift of the first edge in this eventual tail and an element of
\(\GammaP\) carrying it to an edge \(v_j\to v_{j+1}\) of the standard cusp
ray.  At every positive height the continuation toward the cusp is unique, so
the entire lifted tail is carried to
\(v_j\to v_{j+1}\to v_{j+2}\to\cdots\).  Its endpoint is consequently a
\(\GammaP\)-translate of \(\infty\), hence lies in
\(\mathbb P^1(\Fq(t))\).  Irrationality therefore gives infinitely many
returns.

It remains to recall the arithmetic identification.  The first-return theorem
and digit identification in
\cite[Theorem~3.2 and Proposition~3.5]{BroisePaulin2007} associate one
nonconstant Artin digit \(A\) with the connected geodesic segment between two
successive cross-section returns.  In the notation of the cited proposition,
write
\[
i=\begin{bmatrix}0&1\\1&0\end{bmatrix},\qquad
t_A=u(A),\qquad
\lambda_\alpha=
 \begin{bmatrix}\alpha&0\\0&\alpha^{-1}\end{bmatrix}
 \quad(\alpha\in\Fq^\times).
\]
All these matrices are understood projectively.  The forward normalizing
elements in \cite[Proposition~3.5]{BroisePaulin2007} satisfy the following
formula.  Let \(\gamma_r^{\mathrm{BP}}\in\GammaP\) denote the normalizing
element attached there to the \(r\)-th forward return, and let
\(\alpha_r\in\Fq^\times\) be the corresponding unit.  Then
\[
(\gamma_{r-1}^{\mathrm{BP}})^{-1}\gamma_r^{\mathrm{BP}}
 =\lambda_{\alpha_{r-1}}^{-1}i\,t_{A_r}\lambda_{\alpha_r}.
\]
By construction, \(\gamma_r^{\mathrm{BP}}v_0\) is the \(r\)-th section-return
vertex, so the distance between consecutive return vertices is
\[
d_\Tree\bigl(v_0,
 (\gamma_{r-1}^{\mathrm{BP}})^{-1}\gamma_r^{\mathrm{BP}}v_0\bigr).
\]
Since \(i,\lambda_\alpha\in\bK\), the relative transition lies in the double
coset \(\bK u(A_r)\bK\).  If \(d=\deg A_r\), the two elementary-divisor
valuations of \(u(A_r)\) are \(-d\) and \(d\); equivalently,
\[
\bK u(A_r)\bK=\bK\bar a_{2d}\bK.
\]
Consequently,
\[
d_{\Tree}(\gamma_{r-1}^{\mathrm{BP}}v_0,
          \gamma_r^{\mathrm{BP}}v_0)
 =d_{\Tree}(v_0,u(A_r)v_0)=2d.
\]
By the definition of successive returns in
\cite[Theorem~3.2]{BroisePaulin2007}, there is no intermediate section return
inside this segment.  Thus this displacement is one complete cusp excursion
rather than a sum of several excursions.  The local height rule of
Proposition~\ref{prop:local-height} then forces the complete quotient-height
word to be
\[
0,1,2,\ldots,d-1,d,d-1,\ldots,2,1,0.
\]
Indeed, before the unique turn there is only one edge that continues toward
the cusp, and after the turn non-backtracking permits only decreasing height
until the bottom is reached.  Thus the excursion has maximum \(d\), not merely
total displacement \(2d\).

Finally, the coding theorem is stated for the bi-infinite geodesic and its
first-return section, whereas the present construction begins with the
one-sided ray based at \(\bar g v_0\).  Its endpoint is the same point
\(g\infty\) as that of the standard Artin ray defined above.  Two geodesic rays
in a tree with the same endpoint coalesce after a finite initial segment, so
the two rays have identical return segments after finitely many edges.  Their
excursion sequences differ only by a finite prefix, which accounts for the
permitted deletion and relabeling of the digits.  Applying the displayed
height word with \(d=d_r\) gives
\(B_{r+1}-B_r=2d_r\), and the two halves give exactly the stated formulas for
\(k_N\), \(\varepsilon_N\), and \(M_N=N-k_N\).
\end{proof}

\begin{corollary}[Monotonicity and divergence of the cutoff]
\label{cor:MN-diverges}
With the abbreviations fixed above, the sequence \(M_N=N-k_N\) is
nondecreasing: it stays constant when the
projected ray ascends and increases by two when the ray descends.  If
\(\xi_g\) is irrational, then
\[
M_N\longrightarrow\infty.
\]
\end{corollary}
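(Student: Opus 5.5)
The plan is to show that \(M_N\) changes by \(0\) or \(+2\) at each step, using only the local height rule, and then to get divergence from the return-time structure for irrational endpoints.

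\emph{Monotonicity.} The ray \((y_0,y_1,\ldots)\) is a geodesic in \(\Tree\), so consecutive vertices are adjacent and the quotient height changes by exactly one at each step: \(k_{N+1}-k_N\in\{+1,-1\}\). This holds by Proposition~\ref{prop:local-height}, since every neighbor of a vertex of height \(k\) has height \(k\pm1\), and every neighbor of a height-zero vertex has height \(1\). From
\[
M_{N+1}-M_N=1-(k_{N+1}-k_N),
\]
this difference is \(0\) when the projected ray ascends and \(2\) when it descends. Hence \(M_N\) is nondecreasing. This step is purely local and needs no irrationality hypothesis.

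\emph{Divergence.} Assume \(\xi_g\) is irrational. By Proposition~\ref{prop:cf-coding}, the return times \(\tau_r\) with \(k_{\tau_r}=0\) form an infinite sequence, so \(M_{\tau_r}=\tau_r\) tends to infinity. Alternatively, the ray descends infinitely often: every excursion has a descending half of length \(d_r^{\mathrm{geo}}\ge1\). Each descent adds \(2\) to \(M_N\), and there are infinitely many of them. Combined with monotonicity, the whole sequence \(M_N\) then tends to infinity, not only the subsequence.

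\emph{Main point to check.} The only delicate step is the case where the ray has no returns at all initially. Proposition~\ref{prop:cf-coding} already handles this by ruling out an eventually monotone ascending tail for irrational \(\xi_g\), which would force \(\xi_g\) to be a \(\GammaP\)-translate of \(\infty\). So infinitely many descending steps exist, and the conclusion follows directly.
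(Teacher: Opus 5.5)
Your proposal is correct and follows the paper's proof: the increment \(M_{N+1}-M_N=1-(k_{N+1}-k_N)\in\{0,2\}\) comes from the local height rule of Proposition~\ref{prop:local-height}, and divergence comes from the infinitely many returns in Proposition~\ref{prop:cf-coding}, where \(M_{\tau_r}=\tau_r\). Your remark that monotonicity upgrades divergence along the return times to divergence of the whole sequence simply makes explicit a step the paper leaves implicit.
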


\begin{proof}
If \(k_{N+1}=k_N+1\), then \(M_{N+1}=M_N\); if
\(k_{N+1}=k_N-1\), then \(M_{N+1}=M_N+2\).
For an irrational endpoint, Proposition~\ref{prop:cf-coding} gives infinitely
many returns, and at the return times \(M_{\tau_r}=\tau_r\to\infty\).
\end{proof}

The resulting alternating constant and increasing blocks are shown in
Figure~\ref{fig:cf-blocks}.

\begin{figure}[H]
\centering
\begin{tikzpicture}[x=1cm,y=1cm,>=Stealth]
  \begin{scope}[on background layer]
    \draw[panel] (-.34,3.52) rectangle (10.78,6.72);
    \draw[panel] (-.34,.42) rectangle (10.78,3.24);
    \foreach \x in {.80,3.10,5.25,7.65,10.20}{
      \draw[guide] (\x,.34)--(\x,6.58);
    }
  \end{scope}

  \node[figuretitle,anchor=west]
    at (.04,6.40) {projected height $k_N$};
  \draw[line width=1.08pt,draw=figteal]
    (.80,4.12)--(3.10,5.92)
    node[midway,sloped,above=3pt,smalllab,text=figteal] {$\bwd$: ascent};
  \draw[line width=1.08pt,draw=figorange]
    (3.10,5.92)--(5.25,4.12)
    node[midway,sloped,above=3pt,smalllab,text=figorange] {$\fwd$: descent};
  \draw[line width=1.08pt,draw=figteal]
    (5.25,4.12)--(7.65,6.10)
    node[midway,sloped,above=3pt,smalllab,text=figteal] {$\bwd$: ascent};
  \draw[line width=1.08pt,draw=figorange]
    (7.65,6.10)--(10.20,4.12)
    node[midway,sloped,above=3pt,smalllab,text=figorange] {$\fwd$: descent};
  \foreach \x/\y in {.80/4.12,5.25/4.12,10.20/4.12}{
    \node[openvertex,inner sep=1.55pt] at (\x,\y) {};
  }
  \node[vertex] at (3.10,5.92) {};
  \node[vertex] at (7.65,6.10) {};
  \node[lab,above=4pt] at (3.10,5.92) {$d_r$};
  \node[lab,above=4pt] at (7.65,6.10) {$d_{r+1}$};
  \node[smallgraylab,left] at (.52,4.12) {$0$};

  \node[figuretitle,anchor=west]
    at (.04,2.94) {cutoff $M_N=N-k_N$};
  \draw[guide] (.28,.88)--(10.40,.88);
  \draw[guide] (.28,1.76)--(10.40,1.76);
  \draw[guide] (.28,2.64)--(10.40,2.64);
  \node[smalllab,left] at (.28,.88) {$B_r$};
  \node[smalllab,left] at (.28,1.76) {$B_{r+1}$};
  \node[smalllab,left] at (.28,2.64) {$B_{r+2}$};

  \draw[line width=1.10pt,draw=figteal] (.80,.88)--(3.10,.88);
  \draw[line width=1.10pt,draw=figorange]
    (3.10,.88)--(3.64,.88)--(3.64,1.10)--(4.18,1.10)--
    (4.18,1.32)--(4.72,1.32)--(4.72,1.54)--(5.25,1.54)--(5.25,1.76);
  \draw[line width=1.10pt,draw=figteal] (5.25,1.76)--(7.65,1.76);
  \draw[line width=1.10pt,draw=figorange]
    (7.65,1.76)--(8.29,1.76)--(8.29,1.98)--(8.93,1.98)--
    (8.93,2.20)--(9.57,2.20)--(9.57,2.42)--(10.20,2.42)--(10.20,2.64);

  \foreach \x/\y in {
    .80/.88,3.10/.88,3.64/1.10,4.18/1.32,4.72/1.54,
    5.25/1.76,7.65/1.76,8.29/1.98,8.93/2.20,9.57/2.42,10.20/2.64}{
    \node[circle,draw=white,fill=figorange,line width=.30pt,inner sep=1.18pt]
      at (\x,\y) {};
  }
  \node[lab,text=figteal,above=2pt] at (1.95,.88)
    {$M_N=B_r$};
  \node[smalllab,text=figorange,rotate=22,above=3pt] at (4.18,1.34)
    {increments of $2$};
  \node[lab,text=figteal,above=2pt] at (6.45,1.76)
    {$M_N=B_{r+1}$};
  \node[smalllab,text=figorange,rotate=20,above=3pt] at (8.94,2.22)
    {increments of $2$};

  \draw[->,axis] (.30,.30)--(11.10,.30) node[right,smalllab] {scale $N$};
  \node[smalllab,below=4pt] at (.80,.30) {$B_r$};
  \node[smalllab,below=4pt] at (3.10,.30) {$B_r+d_r$};
  \node[smalllab,below=4pt] at (5.25,.30) {$B_{r+1}$};
  \node[smalllab,below=4pt] at (7.65,.30) {$B_{r+1}+d_{r+1}$};
  \node[smalllab,below=4pt] at (10.20,.30) {$B_{r+2}$};
  \draw[decorate,decoration={brace,mirror,amplitude=3.5pt},draw=figgray]
    (.80,-.02)--(5.25,-.02)
    node[midway,below=5pt,smalllab,text=black!78] {$B_{r+1}-B_r=2d_r$};
  \draw[decorate,decoration={brace,mirror,amplitude=3.5pt},draw=figgray]
    (5.25,-.02)--(10.20,-.02)
    node[midway,below=5pt,smalllab,text=black!78]
    {$B_{r+2}-B_{r+1}=2d_{r+1}$};
\end{tikzpicture}
\caption{Projected height excursions (top) and exact cutoff blocks (bottom).
On a backward/ascent half, \(M_N\) is constant; on a forward/descent half, it
increases by \(2\) at each scale.  The value \(B_r+2d_r=B_{r+1}\) belongs to
the next backward block, so there is no extra \(+2\) in the return time.}
\label{fig:cf-blocks}
\end{figure}

\subsection{Bounded observables and finite-window exactness}
\begin{theorem}[Moving-shadow bound]\label{thm:dense-bounded}
Let \(x\in X\).  For every \(N\ge0\) such that \(M_N\ge0\), and every bounded
right \(K\)-invariant observable \(f=\Phi\circ h_K\),
\[
\left|\lambda_{x,N}^{\mathrm{den}}(f)-\int_Xf\,d\mu_X\right|
 \ll_q\|\Phi\|_\infty q^{-M_N/2}.
\]
If \(\xi_g\) is irrational, then
\[
\lambda_{x,N}^{\mathrm{den}}(f)\longrightarrow\int_Xf\,d\mu_X.
\]
\end{theorem}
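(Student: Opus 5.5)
The proof will assemble results already in place.  Fix \(x=\Gamma g\) and \(N\ge0\) with \(M_N\ge0\), and write \(f=\Phi\circ h_K\).

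\emph{Reduction to height laws.}  I first reduce both integrals to integrals on the height space.  By the shadow-realization identity in the spherical dictionary,
\[
\lambda_{x,N}^{\mathrm{den}}(f)=\int\Phi\,d(h_K)_*\lambda_{x,N}^{\mathrm{den}}.
\]
Proposition~\ref{prop:dense-exact-shadow} identifies \((h_K)_*\lambda_{x,N}^{\mathrm{den}}\) exactly with \(\mu^{\varepsilon_N}_{k_N,N}\).  On the Haar side, Proposition~\ref{prop:haar-rho} gives \(\int_X f\,d\mu_X=\ang\Phi\rang_{\rho^{\ev}}\).  The difference is therefore
\[
\int\Phi\,d\bigl(\mu^{\varepsilon_N}_{k_N,N}-\rho^{\ev}\bigr),
\]
and its absolute value is at most \(\|\Phi\|_\infty\|\mu^{\varepsilon_N}_{k_N,N}-\rho^{\ev}\|_{\TV}\).

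\emph{Hypotheses of the moving-root estimate.}  Before applying Proposition~\ref{prop:moving-TV}, I check its hypotheses.  Admissible parity \(k_N\equiv N\pmod 2\) was verified when the truncation \((y_N)\) was defined.  If \(\varepsilon_N=\fwd\), then \(k_N\ge1\) holds automatically, since at height zero only the backward case occurs (Convention~\ref{conv:bottom-backward}).  The remaining hypothesis \(M_N\ge0\) is assumed.  Proposition~\ref{prop:moving-TV} then gives the total-variation bound \(O_q(q^{-M_N/2})\), which yields the first display.  Alternatively, one can invoke Theorem~\ref{thm:shadow-principle} with the single component \(L_T=1\).

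\emph{Convergence for irrational endpoints.}  Assume \(\xi_g\) is irrational.  Corollary~\ref{cor:MN-diverges} gives \(M_N\to\infty\).  In particular \(M_N\ge0\) for all large \(N\), and since \(M_N\) is nondecreasing, this holds from some index onward.  The bound just proved then tends to zero.

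The main point requiring care is the bookkeeping at the bottom state: when \(k_N=0\), the state must be read as \((\bwd,0)\), and the terminal layer must still coincide with the descendant shadow used in Proposition~\ref{prop:moving-TV}.  This is covered by Convention~\ref{conv:bottom-backward} together with Proposition~\ref{prop:dense-exact-shadow}, so I expect no genuine obstacle.
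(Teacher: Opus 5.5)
Your proposal is correct and is essentially the paper's proof. Like the paper, you combine the exact realization \eqref{eq:dense-exact-shadow} with Proposition~\ref{prop:moving-TV} and Proposition~\ref{prop:haar-rho}, then obtain convergence from Corollary~\ref{cor:MN-diverges}; your explicit check of admissible parity and of \(k_N\ge1\) in the forward state is a detail the paper leaves implicit.
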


\begin{proof}
Combine the exact identity \eqref{eq:dense-exact-shadow} with
Proposition~\ref{prop:moving-TV}.  The convergence follows from
Corollary~\ref{cor:MN-diverges}.
\end{proof}

\begin{corollary}[Eventual exactness for finite-height tests]
\label{cor:dense-finite-exact}
Suppose \(\xi_g\) is irrational.  Let \(R\in2\ZZ_{\ge0}\), and let
\(\Phi:2\ZZ_{\ge0}\to\CC\) satisfy
\(\operatorname{supp}\Phi\subset\{0,2,\ldots,R\}\).  Then, for all sufficiently
large \(N\),
\[
\lambda_{x,N}^{\mathrm{den}}(\Phi\circ h_K)
 =\int_X\Phi\circ h_K\,d\mu_X.
\]
\end{corollary}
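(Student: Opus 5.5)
The plan is to combine the exact terminal-layer identity \eqref{eq:dense-exact-shadow} with the finite-window exactness of Proposition~\ref{prop:finite-window-exact}. Divergence of the cutoff, from Corollary~\ref{cor:MN-diverges}, will supply the threshold beyond which the finite window is exact.

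First, since \(\xi_g\) is irrational, Corollary~\ref{cor:MN-diverges} gives \(M_N=N-k_N\to\infty\). Fix \(N_0\) such that \(M_N>R\) for all \(N\ge N_0\). Because \(M_N\) is nondecreasing, once the threshold is crossed it is never crossed back, although the limit alone already suffices. For such \(N\) we have \(M_N\ge0\). The pair \((k_N,N)\) has admissible even parity, since \(k_N\equiv N\pmod 2\) as recorded in the construction of the canonical truncation. If \(\varepsilon_N=\fwd\), then \(k_N\ge1\), because at height zero only the backward case occurs.

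Second, Proposition~\ref{prop:dense-exact-shadow} identifies \((h_K)_*\lambda_{x,N}^{\mathrm{den}}\) with \(\mu^{\varepsilon_N}_{k_N,N}\). Therefore
\[
\lambda_{x,N}^{\mathrm{den}}(\Phi\circ h_K)=\int\Phi\,d\mu^{\varepsilon_N}_{k_N,N}.
\]
Proposition~\ref{prop:finite-window-exact} applies with \(\varepsilon=\varepsilon_N\), \(k=k_N\) and \(M=M_N>R\). It gives \(\int\Phi\,d\mu^{\varepsilon_N}_{k_N,N}=\ang\Phi\rang_{\rho^{\ev}}\), because \(\Phi\) is supported in \(\{0,2,\ldots,R\}\).

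Finally, Proposition~\ref{prop:haar-rho} identifies \(\ang\Phi\rang_{\rho^{\ev}}\) with \(\int_X\Phi\circ h_K\,d\mu_X\), which completes the argument.

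The only delicate point is that the hypotheses of Proposition~\ref{prop:finite-window-exact} must hold at the moving root for every large \(N\), uniformly in the state. This is why the parity relation and the rule \(k_N\ge1\) in the forward state need to be checked explicitly. No constants enter, since the identity is exact.
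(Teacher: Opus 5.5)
Your proposal is correct and follows the paper's own proof: use Corollary~\ref{cor:MN-diverges} to get \(M_N>R\) eventually, then apply Proposition~\ref{prop:finite-window-exact} to the exact shadow identity \eqref{eq:dense-exact-shadow}, with Proposition~\ref{prop:haar-rho} converting \(\ang\Phi\rang_{\rho^{\ev}}\) into the Haar integral. Your explicit checks of admissible parity and of \(k_N\ge1\) in the forward state are hypotheses that the paper's two-line proof uses without stating them.
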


\begin{proof}
By Corollary~\ref{cor:MN-diverges}, eventually \(M_N>R\).  Apply
Proposition~\ref{prop:finite-window-exact} to
\eqref{eq:dense-exact-shadow}.
\end{proof}

\begin{remark}
After adjoining the compact factor \(M_\infty\), the sets \(M_\infty U_N\)
(considered separately in the two parity classes) are the full-horospherical
objects naturally compared with the framework of \cite{CFS2022}.  We do not
invoke its equidistribution theorem here.  On right \(K\)-invariant observables
the compact factor does not change the average, as explained in
Section~\ref{sec:spherical-dictionary}.
The additional information here is the exact finite-scale identity
\eqref{eq:dense-exact-shadow}, the explicit discrepancy, and the finite-window
equality.
\end{remark}

\subsection{Unbounded profiles}
For a moving forward root, \(L^1(\rho^{\ev})\) alone does not control the
always-forward atom and all late first turns.  The following condition isolates
the two additional terms.

\begin{definition}[Moving-root admissibility]
\label{def:moving-admissible}
Fix \(x=\Gamma g\in X\), and retain the notation
\(k_N=k_N(x)\), \(\varepsilon_N=\varepsilon_N(x)\), and \(M_N=M_N(x)\) from
the canonical truncation above.  Assume \(M_N\to\infty\).  A height profile
\(\Phi:2\ZZ_{\ge0}\to\CC\) is \emph{admissible for the moving root attached to
\(x\)} if
\[
\Phi\in L^1(\rho^{\ev}),\qquad
q^{-N}|\Phi(k_N+N)|\longrightarrow0,
\]
and
\begin{equation}\label{eq:window-admissibility}
\mathcal W_N(\Phi;x)
 :=q^{-M_N/2}
 \sum_{\substack{0\le d\le N+k_N-2\\ d\equiv M_N\ (2)}}
 q^{-d/2}|\Phi(d)|\longrightarrow0.
\end{equation}
Here \(d\) is the height of the point mass produced by a late first turn; up
to a constant depending only on \(q\), the factor
\(q^{-M_N/2}q^{-d/2}\) is the corresponding first-turn weight.
An empty sum is interpreted as zero.  The window condition is automatic for
bounded \(\Phi\), and also for \(|\Phi(2m)|\ll q^{\alpha m}\) with \(\alpha<1\).
\end{definition}

\begin{theorem}[Admissible \(L^1\)-profiles]
\label{thm:dense-L1}
Fix \(x=\Gamma g\in X\), retain the notation \(k_N,\varepsilon_N,M_N\) above,
and assume \(M_N\to\infty\).  If \(\Phi\) is admissible for the moving root
attached to \(x\), then
\[
\lambda_{x,N}^{\mathrm{den}}(\Phi\circ h_K)
 \longrightarrow
 \sum_{m\ge0}\rho^{\ev}(2m)\Phi(2m).
\]
\end{theorem}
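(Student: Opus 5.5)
By Proposition~\ref{prop:dense-exact-shadow} we have \(\lambda_{x,N}^{\mathrm{den}}(\Phi\circ h_K)=\int\Phi\,d\mu^{\varepsilon_N}_{k_N,N}\). Since \(M_N\to\infty\), we may restrict to \(N\) with \(M_N\ge0\). It then suffices to show that \(\int\Phi\,d\mu^{\varepsilon_N}_{k_N,N}-\ang\Phi\rang_{\rho^{\ev}}\to0\), which I will do separately for backward and forward scales. Define the tail
\[
T(L)=\sum_{2m\ge L}\rho^{\ev}(2m)|\Phi(2m)|.
\]
Because \(\Phi\in L^1(\rho^{\ev})\), we have \(T(L)\to0\) as \(L\to\infty\).

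\emph{Backward scales.} Proposition~\ref{prop:back-discrepancy} bounds the discrepancy by \(T(M_N)\), and \(T(M_N)\to0\). The root height plays no role here.

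\emph{Forward scales.} Integrate \(\Phi\) against the first-turn identity of Proposition~\ref{prop:forward-discrepancy}, and split the resulting discrepancy into four pieces.
\begin{enumerate}
\item The always-forward term. It is at most \(q^{-N}|\Phi(k_N+N)|+q^{-N}|\ang\Phi\rang_{\rho^{\ev}}|\). The first summand tends to zero by the second admissibility hypothesis, and the second by \(N\ge M_N\to\infty\).
\item Early turns, \(r\le M_N/2\). The backward subshadow has cutoff \(M_N-2r\ge0\), so Proposition~\ref{prop:back-discrepancy} bounds the contribution by \((q-1)\sum_{r\le M_N/2}q^{-r-1}T(M_N-2r)\). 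Cut the sum at \(r=M_N/4\). For \(r\le M_N/4\), every term is at most \(T(M_N/2)\), and the weights sum to at most one. For \(r>M_N/4\), use the bound \(T\le\|\Phi\|_{L^1(\rho^{\ev})}\) together with a geometric weight of size \(O(q^{-M_N/4})\). Both parts tend to zero.
\item Late turns, the part involving \(\ang\Phi\rang_{\rho^{\ev}}\). The weights \(\sum_{r>M_N/2}(q-1)q^{-r-1}\) form a geometric tail of size \(O(q^{-M_N/2})\), so this part tends to zero.
\item Late turns, the part involving \(\Phi\). For \(r>M_N/2\), Lemma~\ref{lem:back-before-bottom} makes the subshadow a point mass at \(d=2r-M_N\). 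Its weight is \((q-1)q^{-r-1}=(q-1)q^{-1}q^{-M_N/2}q^{-d/2}\). As \(r\) ranges over \(M_N/2<r\le N-1\), the height \(d\) runs through the values \(2\le d\le N+k_N-2\) with \(d\equiv M_N\pmod 2\). This part is therefore at most a \(q\)-constant times \(\mathcal W_N(\Phi;x)\), which tends to zero by \eqref{eq:window-admissibility}.
\end{enumerate}
The boundary value \(r=M_N/2\) is a genuine backward subshadow with cutoff \(0\), and it is already covered by the early-turn piece. The case \(k_N+r-1=0\) is handled by Convention~\ref{conv:bottom-backward}. Adding the four pieces shows that the forward discrepancy also tends to zero, which proves the theorem.

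The main obstacle is the early-turn sum. The hypothesis \(\Phi\in L^1(\rho^{\ev})\) gives no decay rate for \(T\), so the geometric decay of the weights cannot be used directly. Splitting at \(r=M_N/4\) resolves this: near the top, the weights decay geometrically; near the bottom, the tail of \(\Phi\) is small. A second point needing care is the parity and range bookkeeping in the late-turn sum, since the range of \(d\) must match the definition of \(\mathcal W_N(\Phi;x)\) exactly.
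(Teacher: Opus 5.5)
Your proposal is correct and follows the paper's proof essentially step by step. Both arguments bound the backward scales by the \(L^1(\rho^{\ev})\) tail and split the forward discrepancy into the always-forward atom, the early-turn \(L^1\)-tails, and the late-turn point masses controlled by \(\mathcal W_N(\Phi;x)\). The only cosmetic difference is that you cut the early-turn sum at the moving index \(r=M_N/4\), whereas the paper cuts it at a fixed \(R\) and lets \(R\to\infty\); both yield the same conclusion.
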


\begin{proof}
Write \(k=k_N\) and \(M=M_N=N-k\).  It is enough to consider the eventually
nonnegative values of \(M\).  In a backward state,
Proposition~\ref{prop:back-discrepancy} gives
\[
\int \Phi\,d\mu^{\bwd}_{k,N}-\ang\Phi\rang_{\rho^{\ev}}
 =q^{-M-1}\Phi(M)
  -\sum_{2m\ge M+2}\rho^{\ev}(2m)\Phi(2m).
\]
Both terms tend to zero because \(\Phi\in L^1(\rho^{\ev})\) and
\(\rho^{\ev}(M)\asymp_q q^{-M}\).

In a forward state, Proposition~\ref{prop:forward-discrepancy} separates the
always-forward atom from the first-turn sum.  The atom tends to zero by the
second condition in Definition~\ref{def:moving-admissible}; its constant part
is \(q^{-N}\ang\Phi\rang_{\rho^{\ev}}=o(1)\).  For a first turn
at time \(r+1\), put
\[
D_r=N-r-1,\qquad \ell_r=k+r-1,\qquad
M_r=D_r-\ell_r=M-2r.
\]
For \(r\le M/2\), Proposition~\ref{prop:back-discrepancy} bounds the
nonconstant contribution by
\[
\sum_{r\le M/2}q^{-r}
 \sum_{2m\ge M-2r}\rho^{\ev}(2m)|\Phi(2m)|.
\]
Given \(R\), the part \(r\le R\) tends to zero as a finite sum of
\(L^1\)-tails, and the part \(r>R\) is at most a constant times
\(\|\Phi\|_{L^1(\rho^{\ev})}\sum_{r>R}q^{-r}\).  Hence the early-turn contribution
vanishes.

For \(r>M/2\), Lemma~\ref{lem:back-before-bottom} makes the backward subshadow
a point mass at \(d=2r-M\).  Therefore the late-turn contribution involving
\(\Phi\) is bounded, up to a constant depending only on \(q\), by
\[
q^{-M/2}
 \sum_{\substack{0\le d\le N+k-2\\d\equiv M\ (2)}}
 q^{-d/2}|\Phi(d)|
 =\mathcal W_N(\Phi;x),
\]
which tends to zero.  The constant part of the late-turn discrepancy is
\(O(q^{-M/2})\).  This proves the theorem.
\end{proof}

\subsection{Growth classes and continued-fraction rates}
\begin{theorem}[Controlled growth for moving roots]
\label{thm:dense-growth}
Fix \(x=\Gamma g\in X\) and retain the notation
\(k_N=k_N(x)\), \(\varepsilon_N=\varepsilon_N(x)\), and \(M_N=M_N(x)\).
Let \(\Phi:2\ZZ_{\ge0}\to\CC\), and assume that, for some \(C>0\),
\[
|\Phi(2m)|\le Cq^{\alpha m}\qquad(m\ge0)
\]
with \(\alpha<2\), and put \(\beta=(2-\alpha)/2\).  Whenever \(M_N\ge0\), the
following estimates hold.
\begin{enumerate}[label=(\roman*)]
\item If \(\alpha<1\), then
\[
\left|\lambda_{x,N}^{\mathrm{den}}(\Phi\circ h_K)
       -\ang\Phi\rang_{\rho^{\ev}}\right|
 \ll_{q,\alpha}Cq^{-M_N/2}.
\]
\item If \(\alpha=1\), then
\[
\left|\lambda_{x,N}^{\mathrm{den}}(\Phi\circ h_K)
       -\ang\Phi\rang_{\rho^{\ev}}\right|
 \ll_q C\,(M_N+k_N+1)q^{-M_N/2}.
\]
\item If \(1<\alpha<2\), then
\[
\left|\lambda_{x,N}^{\mathrm{den}}(\Phi\circ h_K)
       -\ang\Phi\rang_{\rho^{\ev}}\right|
 \ll_{q,\alpha}Cq^{-\beta M_N+(\alpha-1)k_N}.
\]
\end{enumerate}
Consequently, convergence holds in (i) if \(M_N\to\infty\), in (ii) if
\[
(M_N+k_N+1)q^{-M_N/2}\to0,
\]
and in (iii) if
\[
-\frac{2-\alpha}{2}M_N+(\alpha-1)k_N\to-\infty.
\]
For \(1<\alpha<2\), a sufficient condition is that, for some
\(\vartheta\ge0\),
\[
k_N\le\vartheta M_N\quad\hbox{eventually},\qquad
\vartheta<\frac{2-\alpha}{2(\alpha-1)}.
\]
\end{theorem}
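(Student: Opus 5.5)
We reduce the statement to the rooted shadow laws via the exact realization \eqref{eq:dense-exact-shadow} of Proposition~\ref{prop:dense-exact-shadow}. This gives
\[
\lambda_{x,N}^{\mathrm{den}}(\Phi\circ h_K)-\ang\Phi\rang_{\rho^{\ev}}
=\int\Phi\,d\mu^{\varepsilon_N}_{k_N,N}-\ang\Phi\rang_{\rho^{\ev}}.
\]
Write \(k=k_N\) and \(M=M_N\ge0\). The growth hypothesis with \(\alpha<2\) gives \(\Phi\in L^1(\rho^{\ev})\) and \(|\ang\Phi\rang_{\rho^{\ev}}|\ll_{q,\alpha}C\), so the exact discrepancy formulas apply. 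We then redo the proof of Proposition~\ref{prop:effective-forward} with all dependence on \(k\) kept explicit.

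\emph{Backward case.} Here Proposition~\ref{prop:effective-back} gives \(\ll_{q,\alpha}Cq^{-\beta M}\) uniformly in \(k\). We compare this with each claimed right-hand side.
In case (i), \(\beta>1/2\), so \(q^{-\beta M}\le q^{-M/2}\).
In case (ii), \(\beta=1/2\), and the claimed bound is larger.
In case (iii), \((\alpha-1)k\ge0\), so \(q^{-\beta M}\le q^{-\beta M+(\alpha-1)k}\).

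\emph{Forward case.} We use Proposition~\ref{prop:forward-discrepancy} and split into three pieces: the always-forward atom, early turns \(r\le M/2\), and late turns \(r>M/2\).

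The atom contributes \(q^{-N}(|\Phi(k+N)|+|\ang\Phi\rang|)\ll Cq^{-N}q^{\alpha(k+N)/2}\). Since \(N=M+k\), this equals \(Cq^{-\beta M+(\alpha-1)k}\). In cases (i) and (ii) we have \((\alpha-1)k\le0\) and \(\beta\ge 1/2\), so the atom is \(\le Cq^{-M/2}\).

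The early turns have backward subshadows with cutoff \(M-2r\ge0\). Proposition~\ref{prop:effective-back} bounds them by
\[
C\sum_{r\le M/2}q^{-r}q^{-\beta(M-2r)}=Cq^{-\beta M}\sum_{r\le M/2}q^{(1-\alpha)r}.
\]
This is \(O(Cq^{-M/2})\) for \(\alpha<1\), \(O(C(M+1)q^{-M/2})\) for \(\alpha=1\), and \(O(Cq^{-\beta M})\) for \(\alpha>1\). In every case it is independent of \(k\).

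The late turns, by Lemma~\ref{lem:back-before-bottom}, are point masses at \(d=2r-M\) with weights \(\asymp q^{-r}=q^{-M/2}q^{-d/2}\). The \(\ang\Phi\rang\)-part is a geometric tail, \(O(Cq^{-M/2})\). The \(\Phi\)-part is bounded by
\[
Cq^{-M/2}\sum_{0\le d\le N+k-2}q^{(\alpha-1)d/2}.
\]

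\emph{The main obstacle} is the late-turn sum, since this is where the moving root enters. The range of \(d\) is \(N+k-2=M+2k-2\), so the sum depends on \(k\) as well as on \(M\). We evaluate it according to \(\alpha\).
For \(\alpha<1\) the sum is a convergent geometric series bounded by a constant in \(q,\alpha\), giving \(Cq^{-M/2}\).
For \(\alpha=1\) the sum counts at most \((M+2k)/2+1\) terms, which gives \(\ll C(M+k+1)q^{-M/2}\). This factor also absorbs the early-turn factor \(M+1\).
For \(1<\alpha<2\) the sum is dominated by its top term, \(\ll_{q,\alpha}q^{(\alpha-1)(M+2k)/2}\). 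Multiplying by \(q^{-M/2}\) gives \(q^{-\beta M+(\alpha-1)k}\), which matches the atom bound and dominates the early-turn bound.
Throughout, the implicit constants come only from geometric series and the formula \eqref{eq:rhoev}, so they depend only on \(q\) and \(\alpha\) (only on \(q\) when \(\alpha=1\)).

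\emph{Convergence statements.} The three convergence assertions follow directly from the three bounds. For the sufficient condition in (iii), suppose \(k\le\vartheta M\) eventually. Then the exponent satisfies \(-\beta M+(\alpha-1)k\le-(\beta-(\alpha-1)\vartheta)M\). The coefficient \(\beta-(\alpha-1)\vartheta\) is positive exactly when \(\vartheta<(2-\alpha)/(2(\alpha-1))\), so the exponent tends to \(-\infty\) as soon as \(M\to\infty\). For the dense orbits of interest, \(M\to\infty\) holds by Corollary~\ref{cor:MN-diverges}.
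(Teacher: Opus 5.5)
Your proposal is correct and follows the paper's proof essentially step for step. The backward case goes through Proposition~\ref{prop:effective-back}, and the forward case is split into the always-forward atom, the early turns \(r\le M/2\), and the late point-mass turns at \(d=2r-M\), whose finite geometric sum produces the three regimes.

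Two small repairs are needed. First, for \(\alpha<0\) the constant part \(q^{-N}|\ang\Phi\rang_{\rho^{\ev}}|\) is not bounded by \(Cq^{-N}q^{\alpha(k+N)/2}\), so it should be bounded separately by \(Cq^{-N}\le Cq^{-M/2}\), as the paper does. Second, in (iii) you do not need Corollary~\ref{cor:MN-diverges}, which covers only irrational endpoints: for arbitrary \(x\), the inequality \(N=k_N+M_N\le(1+\vartheta)M_N\) already forces \(M_N\to\infty\).
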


\begin{proof}
The growth hypothesis and \(\alpha<2\) imply
\(\Phi\in L^1(\rho^{\ev})\) and
\(|\ang\Phi\rang_{\rho^{\ev}}|\ll_{q,\alpha}C\).
For a backward state, Proposition~\ref{prop:back-discrepancy} gives
\[
\left|\int \Phi\,d\mu^{\bwd}_{k_N,N}
       -\ang\Phi\rang_{\rho^{\ev}}\right|
 \ll_{q,\alpha}Cq^{-\beta M_N}.
\]
This is bounded by the respective right-hand sides above.

Suppose the state is forward, and write \(k=k_N\), \(M=M_N=N-k\).  The
always-forward atom satisfies
\[
q^{-N}|\Phi(k+N)|
 \le Cq^{-N}q^{\alpha(k+N)/2}
 =Cq^{-\beta M+(\alpha-1)k}.
\]
The omitted constant part of this signed term is
\(q^{-N}|\ang\Phi\rang_{\rho^{\ev}}|\ll_{q,\alpha}Cq^{-N}\), which is smaller
than each asserted bound.
For early first turns, the bound from
Proposition~\ref{prop:effective-back} is
\[
Cq^{-\beta M}
 \sum_{r\le M/2}q^{(1-\alpha)r}.
\]
This is \(O(Cq^{-M/2})\) for \(\alpha<1\),
\(O(C(M+1)q^{-M/2})\) for \(\alpha=1\), and
\(O(Cq^{-\beta M})\) for \(1<\alpha<2\).

For late turns, set \(d=2r-M\).  The contribution containing \(\Phi\) is at
most
\[
Cq^{-M/2}
 \sum_{\substack{0\le d\le N+k-2\\d\equiv M\ (2)}}
q^{(\alpha-1)d/2}.
\]
The contribution containing \(\ang\Phi\rang_{\rho^{\ev}}\) is
\(O_{q,\alpha}(Cq^{-M/2})\), and is absorbed by the same three bounds.
The finite geometric sum is bounded if \(\alpha<1\), has
\(O(M+k+1)\) terms if \(\alpha=1\), and, if \(1<\alpha<2\), is bounded by a
constant times
\[
q^{(\alpha-1)(M+2k)/2}.
\]
The latter gives \(O(Cq^{-\beta M+(\alpha-1)k})\).  Combining these estimates
proves the theorem.
\end{proof}

\begin{corollary}[Rates in continued-fraction coordinates]
\label{cor:cf-rates}
Assume that \(\xi_g\) is irrational and index the return times and digits as in
Proposition~\ref{prop:cf-coding}.  Under the growth hypothesis of
Theorem~\ref{thm:dense-growth}, set
\[
E_N(\Phi;x)=
\left|\lambda_{x,N}^{\mathrm{den}}(\Phi\circ h_K)
      -\ang\Phi\rang_{\rho^{\ev}}\right|,
\qquad \beta=\frac{2-\alpha}{2}.
\]
On the ascending half, where \(N=B_r+j\) and \(0\le j\le d_r-1\), one has
\[
E_{B_r+j}(\Phi;x)\ll_{q,\alpha}Cq^{-\beta B_r}.
\]
On the descending half, where \(N=B_r+d_r+j\) and
\(0\le j\le d_r-1\), one has
\[
E_{B_r+d_r+j}(\Phi;x)\ll
\begin{cases}
Cq^{-B_r/2-j},&\alpha<1,\\
C(B_r+d_r+j+1)q^{-B_r/2-j},&\alpha=1,\\
Cq^{-\beta B_r+(\alpha-1)d_r-j},&1<\alpha<2,
\end{cases}
\]
where the implicit constant depends only on \(q,\alpha\) (only on \(q\) in
the middle line).  Thus the factor at \(\alpha=1\) is linear in the scale
\(N=B_r+d_r+j\), equivalently logarithmic in the ball volume \(q^N\).
\end{corollary}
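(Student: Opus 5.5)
The plan is to substitute the explicit block values of Proposition~\ref{prop:cf-coding} into the three regimes of Theorem~\ref{thm:dense-growth}, treating the ascending and descending halves of each excursion separately. In both halves, Corollary~\ref{cor:MN-diverges} gives \(M_N\ge B_r\ge0\), so Theorem~\ref{thm:dense-growth} applies at every scale considered. Its implicit constants depend only on \(q,\alpha\), or only on \(q\) when \(\alpha=1\), and involve no \(x\), \(N\) or \(k_N\). They therefore carry over unchanged.

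On the ascending half, \(N=B_r+j\) with \(0\le j\le d_r-1\), and the state is backward with \(M_N=B_r\). I will not use the three-case forward bound here. Instead I use the backward bound established at the start of the proof of Theorem~\ref{thm:dense-growth}, which comes from Proposition~\ref{prop:effective-back} applied with \(k=k_N\). That bound is \(E_N(\Phi;x)\ll_{q,\alpha}Cq^{-\beta M_N}\). Substituting \(M_N=B_r\) gives the claimed estimate \(Cq^{-\beta B_r}\) directly, with no dependence on \(j\) or \(d_r\). Using the backward estimate rather than the moving forward bound is the key point of this half, because the backward bound does not see \(k_N\).

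On the descending half, \(N=B_r+d_r+j\), the state is forward, \(k_N=d_r-j\) and \(M_N=B_r+2j\). Substituting these values case by case:
\begin{itemize}
\item For \(\alpha<1\), \(q^{-M_N/2}=q^{-B_r/2-j}\).
\item For \(\alpha=1\), \(M_N+k_N+1=B_r+d_r+j+1=N+1\), so the bound is \(C(N+1)q^{-B_r/2-j}\).
\item For \(1<\alpha<2\), the exponent is \(-\beta(B_r+2j)+(\alpha-1)(d_r-j)=-\beta B_r+(\alpha-1)d_r-(2\beta+\alpha-1)j\). Since \(2\beta=2-\alpha\), the coefficient of \(j\) is \(1\), and the bound becomes \(Cq^{-\beta B_r+(\alpha-1)d_r-j}\).
\end{itemize}
The remark about a linear-in-\(N\) factor then follows from \(N=B_r+d_r+j\).

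The main obstacle is bookkeeping rather than analysis. The indexing after the finite relabeling in Proposition~\ref{prop:cf-coding} must make the state and cutoff formulas hold exactly, including at the endpoint scales \(j=0\) and \(N=B_{r+1}\). The exponent arithmetic in the third regime also needs care: the factor of \(j\) must cancel exactly to \(-j\), not merely be bounded above.
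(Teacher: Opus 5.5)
Your proposal is correct and follows the paper's proof essentially verbatim. On the ascending half you use the backward estimate from the proof of Theorem~\ref{thm:dense-growth} with \(M_N=B_r\). On the descending half you substitute \(M_N=B_r+2j\), \(k_N=d_r-j\) into the three forward bounds, using the same cancellation \(-\beta(B_r+2j)+(\alpha-1)(d_r-j)=-\beta B_r+(\alpha-1)d_r-j\). Your additional checks (that \(M_N\ge B_r\ge0\), so the theorem applies, and that \(M_N+k_N+1=N+1\)) are bookkeeping the paper leaves implicit.
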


\begin{proof}
During ascent, Proposition~\ref{prop:cf-coding} gives the backward state with
\(M_N=B_r\), so the backward estimate in the proof of
Theorem~\ref{thm:dense-growth} gives the first bound.  During descent it gives
the forward state with \(M_N=B_r+2j\) and \(k_N=d_r-j\).  Substitution into
the three forward bounds of that theorem gives the displayed cases; in the
last one,
\[
-\beta(B_r+2j)+(\alpha-1)(d_r-j)
=-\beta B_r+(\alpha-1)d_r-j.
\]
\end{proof}

\section{Conclusion}\label{sec:conclusion}
The \(K\)-spherical restriction turns the Nagao quotient into a nearest-neighbor
recursion on the oriented state space
\(\{\fwd,\bwd\}\times\ZZ_{\ge0}\).  Its asymmetry---one edge toward the cusp
and \(q\) edges toward the compact part---is precisely what produces the exact
top-shell correction, the missing tail, and the first-turn mixture.

For compact \(U\)-orbits, the right-action convention and the normal form in
Proposition~\ref{prop:compact-normal-form} lead to the single-shadow identity
\eqref{eq:compact-exact-shadow}.  For arbitrary orbits, the standard balls
\(t^N\cO\) lead to the moving-shadow identity
\eqref{eq:dense-exact-shadow}.  The continued-fraction digits do not replace
the geometric argument; they give an arithmetic description of the successive
excursion depths and hence of the scale-dependent cutoff.

The conclusions of this paper concern right \(K\)-invariant observables.
The broader horospherical literature addresses non-spherical questions under
its own hypotheses, but no non-spherical equidistribution statement for the
specific \(U_N\)-averages is deduced here.  What is special to the present model
is that the spherical error can be computed at every finite scale and vanishes
identically on every fixed finite height window once the cutoff has passed that
window.

\end{document}